\definecolor{gray0}{gray}{0.40}
\definecolor{gray1}{gray}{0.60}
\definecolor{gray2}{gray}{0.70}
\setlist[itemize]{leftmargin=6mm}
\newcommand{\G}{\mathbb G}
\DeclareMathOperator{\codim}{codim}
\newcommand{\rddots}{\reflectbox{$\ddots$}}
\DeclareMathOperator{\Cl}{Cl}
\DeclareMathOperator{\NE}{NE}
\DeclareMathOperator{\rk}{rk}
\DeclareMathOperator{\lin}{lin}
\DeclareMathOperator{\mult}{mult}
\DeclareMathOperator{\Exc}{Exc}
\DeclareMathOperator{\Sing}{Sing}
\DeclareMathOperator{\Eff}{Eff}
\DeclareMathOperator{\Nef}{Nef}
\DeclareMathOperator{\Mov}{Mov}
\DeclareMathOperator{\Pic}{Pic}
\DeclareMathOperator{\rank}{rank}
\DeclareMathOperator{\MCD}{MCD}
\DeclareMathOperator{\SBLD}{SBLD}
\DeclareMathOperator{\mov}{mov}
\newtheorem{thm}{Theorem}[section]
\newtheorem{Lemma}[thm]{Lemma}
\newtheorem{Proposition}[thm]{Proposition}
\newtheorem{Corollary}[thm]{Corollary}
\theoremstyle{definition}
\newtheorem{Definition}[thm]{Definition}
\newtheorem{Remark}[thm]{Remark}
\newtheorem{Example}[thm]{Example}
\begin{document}

\title{On Mori chamber and stable base locus decompositions}

\author[Antonio Laface]{Antonio Laface}
\address{\sc Antonio Laface\\
Departamento de Matematica, Universidad de Concepci\'on\\
Casilla 160-C, Concepci\'on\\
Chile}
\email{alaface@udec.cl}

\author[Alex Massarenti]{Alex Massarenti}
\address{\sc Alex Massarenti\\ Dipartimento di Matematica e Informatica, Universit\`a di Ferrara, Via Machiavelli 30, 44121 Ferrara, Italy\newline
\indent Instituto de Matem\'atica e Estat\'istica, Universidade Federal Fluminense, Campus Gragoat\'a, Rua Alexandre Moura 8 - S\~ao Domingos\\
24210-200 Niter\'oi, Rio de Janeiro\\ Brazil}
\email{alex.massarenti@unife.it, alexmassarenti@id.uff.br}

\author[Rick Rischter]{Rick Rischter}
\address{\sc Rick Rischter\\
Universidade Federal de Itajub\'a (UNIFEI)\\ 
Av. BPS 1303, Bairro Pinheirinho\\ 
37500-903, Itajub\'a, Minas Gerais\\ 
Brazil}
\email{rischter@unifei.edu.br}

\date{\today}
\subjclass[2010]{Primary 14E05, 14L10, 14M15; Secondary 14J45, 14MXX}
\keywords{Mori dream spaces, Mori chamber decomposition, stable base locus decomposition, Cox rings}

\begin{abstract}
The effective cone of a Mori dream space admits two wall-and-chamber decompositions called Mori chamber and stable base locus decompositions. In general the former is a non trivial refinement of the latter. We investigate, from both the geometrical and the combinatorial viewpoints, the differences between these decompositions. Furthermore, we provide a criterion to establish whether the two decompositions coincide for a Mori dream space of Picard rank two, and we construct an explicit example of a Mori dream space of Picard rank two for which the decompositions are different, showing that our criterion is sharp. Finally, we classify the smooth toric $3$-folds of Picard rank three for which the two decompositions are different.
\end{abstract}

\maketitle 

\setcounter{tocdepth}{1}

\tableofcontents

\section{Introduction}
\textit{Mori dream spaces}, introduced by Y. Hu and S. Keel in \cite{HK00}, are varieties whose total coordinate ring, called the \textit{Cox ring}, is finitely generated. The birational geometry of a Mori dream space is encoded in its cone of effective divisors together with a chamber decomposition on it, called \textit{Mori chamber decomposition}. Two effective divisors lie in the interior of the same Mori chamber if there is an isomorphism between the target spaces of the corresponding dominant rational maps making the obvious triangular diagram commutative. 

The birational geometry of a Mori dream spaces can also be described via the Variation of Geometric Invariant Theory of its Cox ring. As proven in \cite{HK00}, \cite[Section 3.3.4]{ADHL15}, and \cite[Appendix A]{HKP06} in the case of complete toric varieties using the volume function, from this point of view GIT chambers correspond to Mori chambers.

The pseudo-effective cone of a projective variety with zero irregularity, so in particular of a Mori dream space, can be decomposed into chambers depending on the stable base locus of the corresponding linear series. Such decomposition, called \textit{stable base locus decomposition}, in general is coarser than the Mori chamber decomposition.

The Mori theory of important classes of moduli spaces such as moduli of curves \cite{Ha05}, \cite{HH09}, \cite{HH13}, Hilbert schemes of points on surfaces \cite{BC13}, \cite{ABCH13}, Kontsevich spaces of stable maps \cite{Ch08}, \cite{CC10}, \cite{CC11}, spaces of complete forms \cite{Ce15}, \cite{Ma18a}, \cite{Ma18b}, and moduli spaces of parabolic bundles \cite{Mu05}, \cite{AM16} have recently been studied in a series of papers. 

In this paper, given a Mori dream space $X$, we aim to understand how far is the stable base locus decomposition of $\Eff(X)$ from determining its Mori chamber decomposition. In Section \ref{examples} we produce examples of Mori dream spaces for which the two decompositions are different and we interpret them both from the geometric and the combinatorial viewpoints. 

While producing examples of either non compact varieties or of varieties with Picard rank greater than or equal to three turns out to be fairly feasible, it is quite tricky to exhibit a normal $\mathbb{Q}$-factorial projective Mori dream space of Picard rank two for which the Mori chamber decomposition is a non trivial refinement of the stable base locus decomposition. In Example \ref{ex1} we construct such a Mori dream space, and at the best of our knowledge this is the fist example of a projective variety displaying this particular behavior appearing in the literature. 

\begin{thm}\label{th1}
Let $Z$ be the toric variety with Cox ring
$K[T_1,\dots,T_{11}]$ whose 
grading matrix and irrelevant ideal are
the following
\[
 Q = \begin{bmatrix}
  1&1&2&2&2&2&1&0&0&0&0\\
  0&0&1&1&1&1&2&1&1&1&1
 \end{bmatrix}
 \qquad
  \mathcal J_{\rm irr}(Z) 
  = 
  \langle T_1,T_2\rangle
  \cap
  \langle T_3,\dots,T_{11}\rangle
\]
and let $F,G$ be two general polynomials of degree $(2,2)$ in the $T_i$. Then the ring 
$$\frac{K[T_1,\dots,T_{11}]}{(F,G)}$$
is the Cox ring of a projective normal $\mathbb{Q}$-factorial Mori dream space $X\subset Z$ of Picard rank two. Furthermore, the Mori chamber decomposition of $\Eff(X)$ consists of three chambers while its stable base locus decomposition consists of two chambers. 
\end{thm}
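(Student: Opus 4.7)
The plan is to proceed in three stages: verify that $X$ is a Mori dream space of Picard rank two, determine the three Mori chambers of $\Eff(X)$, and then show that the stable base locus decomposition consists of only two chambers.

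First, I would verify the Cox ring assertion. The ambient variety $Z$ is a projective $\Q$-factorial toric variety whose free polynomial Cox ring carries the stated $\Z^2$-grading. By a Bertini-type argument applied to the generality of $F$ and $G$, the subscheme $X = V(F, G) \subset Z$ is integral, normal, and $\Q$-factorial of codimension two, with $\Cl(X) \cong \Z^2$ inherited from $Z$. Applying the standard criterion of \cite{ADHL15} that identifies the Cox ring of a general complete intersection inside a Mori dream space as the corresponding quotient of the ambient Cox ring, one concludes that $\Cox(X) = K[T_1, \ldots, T_{11}]/(F, G)$ with the inherited $\Z^2$-grading.

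Second, I would determine the Mori chambers. The generators of $\Cox(X)$ lie in exactly four distinct rays of $\Z^2$: $(1,0)$ for $T_1, T_2$, $(2,1)$ for $T_3, \ldots, T_6$, $(1,2)$ for $T_7$, and $(0,1)$ for $T_8, \ldots, T_{11}$. For a Mori dream space of Picard rank two, the GIT chamber decomposition of the effective cone coincides with the Mori chamber decomposition, and its walls occur precisely at the interior rays spanned by generator degrees, giving three chambers here. To confirm that the three chambers are genuinely distinct (rather than coinciding in pairs), I would exhibit the three associated small $\Q$-factorial modifications of $X$ and check that crossing each wall yields a nontrivial flip.

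Third, I would compute stable base loci by picking a representative class per Mori chamber. For $D = (1,0)$ in the first chamber, the sections $T_1, T_2$ have base locus $V(T_1, T_2) \subset Z$, which lies in the irrelevant locus; the stable base locus on $X$ is therefore empty. For $D = (1,1)$ in the middle chamber, the eight monomial sections $T_i T_j$ with $i \in \{1, 2\}$, $j \in \{8, \ldots, 11\}$ have common zero locus $V(T_8, \ldots, T_{11}) \subset Z$ modulo the irrelevant ideal. The key computation restricts $F$ and $G$ to this subvariety: by a direct count the only degree-$(2,2)$ monomials in $T_1, \ldots, T_7$ are $T_1 T_7$ and $T_2 T_7$, since the constraint $a_3 + \cdots + a_6 = 2$ forces the first component of the bidegree to exceed two. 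Hence both restrictions lie in the ideal $(T_7)$, and for generic coefficients one deduces that $X \cap V(T_8, \ldots, T_{11}) = V(T_7, T_8, \ldots, T_{11})$ on $X$. For $D = (1,3)$ in the third chamber, an analogous enumeration gives the same base locus on $Z$ and hence the same intersection with $X$, namely $V(T_7, \ldots, T_{11})$. Thus chambers two and three share the stable base locus $V(T_7, \ldots, T_{11})$ while chamber one has empty stable base locus, so the decomposition collapses to exactly two chambers.

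The main obstacle I anticipate lies in Step 2, proving that all three Mori chambers are truly distinct rather than coinciding in pairs. Having four distinct generator-rays merely guarantees candidate walls, not authentic Mori chamber walls; distinguishing them rigorously requires either an explicit description of the three small $\Q$-factorial modifications induced on $X$ or an appeal to the Picard-rank-two theory relating Mori chambers to birational models under pseudo-automorphisms.
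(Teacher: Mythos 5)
Your Step 2 contains the genuine gap, and it is precisely the heart of the theorem. You assert that for a Picard-rank-two Mori dream space the walls of the Mori chamber decomposition ``occur precisely at the interior rays spanned by generator degrees,'' which is false in general: Mori chambers are GIT chambers, i.e.\ intersections of \emph{orbit cones}, and these can merge across a generator-degree ray (the paper's Example 4.9 with $a\geq 3$ exhibits exactly this, with $\lambda_2\cup\lambda_3$ forming a single Mori chamber despite a generator degree on the ray between them). So having four generator rays only produces candidate walls, as you yourself note, and your fallback plan --- exhibit the three small modifications and check each flip is nontrivial --- is left entirely unexecuted. The paper closes this gap with a short argument you are missing: each of the cones $\operatorname{cone}(w_1,\dots,w_6)$, $\operatorname{cone}(w_3,\dots,w_7)$, $\operatorname{cone}(w_7,\dots,w_{11})$ is an orbit cone of $\overline X$, because every monomial of $F$ and $G$ vanishes identically on the corresponding coordinate subspaces $V(T_7,\dots,T_{11})$, $V(T_1,T_2,T_8,\dots,T_{11})$, $V(T_1,\dots,T_6)$, so these subspaces lie in $\overline X$ and give $\mathfrak F$-faces. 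Since every GIT chamber is contained in each orbit cone containing it, the three cones $\lambda_A,\lambda,\lambda'$ contain three distinct GIT chambers, and since all orbit cones are spanned by degrees on the four rays, these chambers are exactly $\lambda_A,\lambda,\lambda'$. Your Step 3 (the monomial count showing that the restrictions of $F,G$ to $V(T_8,\dots,T_{11})$ lie in $(T_7)$, so the two non-ample chambers share the base locus $V(T_7,\dots,T_{11})$) matches the substance of the paper's argument, phrased there via the bunch of orbit cones and Proposition 2.10; you should only add the observation that every monomial of degree $(s,s)$ (resp.\ $(s,3s)$) involves some $T_i$ with $i\geq 7$, so that your computed base loci really are the \emph{stable} base loci.

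A secondary weakness is Step 1: the isomorphism $\imath^*\colon\Cl(Z)\to\Cl(X)$ for a codimension-two complete intersection in a singular toric variety is not a ``Bertini-type'' statement, and it is the hypothesis needed to apply \cite[Corollary 4.1.1.5]{ADHL15}. The paper proves it by hand: surjectivity because $X$ minus the images of $V(T_2)\cup V(T_8)$ is an affine space, and $\rho(X)\geq 2$ because the wall crossing at $w_1,w_2$ gives a surjection $X\to\mathbb{P}^1$. Without some such argument your claim that $\Cl(X)\cong\mathbb{Z}^2$ is unsupported.
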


By Proposition \ref{prop_5chambers} if $X$ is a toric $3$-fold such that the two decompositions differ inside the movable cone then there are at least five Mori chambers in the movable cone, and Examples \ref{3toric_not_smooth}, \ref{4fold_toric} show that Proposition \ref{prop_5chambers} is sharp meaning that one can have that the two decompositions differ inside the movable cone with five chambers and with three chambers in dimension higher than three. In Section \ref{sec_toric3fold} we restrict to the smooth case, and we classify smooth toric $3$-folds of Picard rank three such that their Mori chamber and stable base locus decomposition do not coincide.

\begin{thm}\label{prop3foldpic3}
Let $X$ be a smooth toric $3$-fold of Picard rank three such that its Mori chamber and stable base locus decomposition do not coincide. Then the Mori chamber decomposition of $X$ is one of the seven types listed in the following table. 
\begin{center}
\begin{footnotesize}
%\begin{table}[h]
%\begin{center}
\begin{longtable}{cccc}
grading matrix& Effective cone \hspace*{1cm}&grading matrix& Effective cone \\ \hline
\makecell{\vspace{0.4cm} \\
$
 G_1 := 
 \begin{bmatrix}
   1&0&0&1&0&0\\
   \alpha&1&0&0&1&0\\
   \beta&0&1&0&0&1
 \end{bmatrix}
$\\ $\alpha,\beta >0$}& 
\makecell{\vspace{0.2cm} \\ 
\begin{tikzpicture}[scale=.3]
\tkzDefPoint(2,2){P1}
\tkzDefPoint(0,6){P2}
\tkzDefPoint(0,0){P3}
\tkzDefPoint(6,0){P4}
\tkzFillPolygon[color = black](P1,P2,P3)
\tkzFillPolygon[color = gray!50](P1,P2,P4)
\tkzFillPolygon[color = gray!50](P1,P3,P4)
\tkzDrawSegments[thick](P2,P3 P3,P4 P4,P2)
\tkzDrawSegments[densely dotted](P1,P2 P1,P3 P1,P4)
\tkzDrawPoints[fill=black,color=black,size=8](P1,P2,P3,P4)
\end{tikzpicture}\hspace*{1cm}}& 
\makecell{\vspace{0.4cm} \\ $  G_2 := 
 \begin{bmatrix}
   1&0&0&1&0&0\\
   \alpha&1&0&0&1&0\\
   \beta&\gamma&1&0&0&1
 \end{bmatrix} 
$ \\ $\alpha,\beta ,\gamma<0$}&  
\makecell{\vspace{0.2cm} \\ 
\begin{tikzpicture}[scale=.3]
\tkzDefPoint(2,1){P1}
\tkzDefPoint(0,6){P2}
\tkzDefPoint(0,0){P3}
\tkzDefPoint(6,0){P4}
\tkzDefPoint(0,3){P5}
\tkzInterLL(P5,P4)(P2,P1) \tkzGetPoint{Q1}
\tkzFillPolygon[color = black](P1,P3,P5)
\tkzFillPolygon[color = gray!50](P1,Q1,P4)
\tkzFillPolygon[color = gray!50](P1,P3,P4)
\tkzDrawSegments[thick](P2,P3 P3,P4 P4,P2)
\tkzDrawSegments[densely dotted](P1,P2 P1,P3 P1,P4 P5,P4 P5,P1)
\tkzDrawPoints[fill=black,color=black,size=8](P1,P2,P3,P4,P5)
\end{tikzpicture}} \vspace{0.2cm} \\ \hline
\makecell{\vspace{0.4cm} \\
$G_3 := 
 \begin{bmatrix}
   1&0&\gamma&1&0&0\\
   -1&1&0&0&1&0\\
   0&1&1&0&0&1
 \end{bmatrix} 
 $\\ $\gamma>0 $}& 
\makecell{\vspace{0.2cm} \\ 
\begin{tikzpicture}[scale=.3]
\tkzDefPoint(3,1){P1}
\tkzDefPoint(0,6){P2}
\tkzDefPoint(0,0){P3}
\tkzDefPoint(6,0){P4}
\tkzDefPoint(0,2){P5}
\tkzDefPoint(1,3){P6}
\tkzInterLL(P1,P2)(P4,P6) \tkzGetPoint{Q1}
\tkzInterLL(P6,P3)(P4,P5) \tkzGetPoint{Q2}
\tkzFillPolygon[color = black](P1,P6,Q2)
\tkzFillPolygon[color = gray!50](P5,P2,P6)
\tkzFillPolygon[color = gray!50](P2,P6,Q1)
\tkzDrawSegments[thick](P2,P3 P3,P4 P4,P2)
\tkzDrawSegments[densely dotted](P1,P2 P1,P3 P1,P4 P5,P4 P5,P1 P3,P6 P1,P6 P2,P6 P4,P6 P5,P6)
\tkzDrawPoints[fill=black,color=black,size=8](P1,P2,P3,P4,P5,P6)
\end{tikzpicture}\hspace*{1cm}
}& 
\makecell{\vspace{0.4cm} \\ $  G_4 := 
 \begin{bmatrix}
   1&\beta&\gamma&1&0&0\\
   -1&1&0&0&1&0\\
   0&1&1&0&0&1
 \end{bmatrix}
$ \\ $ \beta>0>\gamma$}&  
\makecell{\vspace{0.2cm} \\ 
\begin{tikzpicture}[scale=.3]
\tkzDefPoint(3.5,0){P1}
\tkzDefPoint(0,6){P2}
\tkzDefPoint(0,0){P3}
\tkzDefPoint(6,0){P4}
\tkzDefPoint(0,3){P5}
\tkzDefPoint(1.4,2.3){P6}
\tkzInterLL(P1,P2)(P4,P6) \tkzGetPoint{Q1}
\tkzInterLL(P5,P1)(P3,P6) \tkzGetPoint{Q2}
\tkzFillPolygon[color = black](Q2,P6,P1)
\tkzFillPolygon[color = gray!50](P5,P2,P6)
\tkzFillPolygon[color = gray!50](P2,P6,Q1)
\tkzDrawSegments[thick](P2,P3 P3,P4 P4,P2)
\tkzDrawSegments[densely dotted](P1,P2 P1,P3 P1,P4 P5,P4 P5,P1 P3,P6 P1,P6 P2,P6 P4,P6 P5,P6)
\tkzDrawPoints[fill=black,color=black,size=8](P1,P2,P3,P4,P5,P6)
\end{tikzpicture}} \vspace{0.2cm}\\ \hline
\makecell{\vspace{0.4cm} \\
$
  G_5 := 
 \begin{bmatrix}
   1&\beta&\gamma&1&0&0\\
   -1&1&0&0&1&0\\
   0&1&1&0&0&1
 \end{bmatrix}
$\\ $\beta<0<\gamma$}& 
\makecell{\vspace{0.2cm} \\ 
\begin{tikzpicture}[scale=.3]
\tkzDefPoint(1.5,2){P1}
\tkzDefPoint(0,6){P2}
\tkzDefPoint(0,0){P3}
\tkzDefPoint(6,0){P4}
\tkzDefPoint(0,2){P5}
\tkzDefPoint(3,2){P6}
\tkzInterLL(P1,P3)(P4,P5) \tkzGetPoint{Q1}
\tkzInterLL(P4,P1)(P3,P6) \tkzGetPoint{Q2}
\tkzInterLL(P4,P5)(P3,P6) \tkzGetPoint{Q3}
\tkzFillPolygon[color = black](P1,Q1,Q3,Q2)
\tkzFillPolygon[color = gray!50](P5,P2,P1)
\tkzFillPolygon[color = gray!50](P2,P6,P1)
\tkzDrawSegments[thick](P2,P3 P3,P4 P4,P2)
\tkzDrawSegments[densely dotted](P1,P2 P1,P3 P1,P4 P5,P4 P5,P1 P3,P6 P1,P6 P2,P6 P4,P6 P5,P6)
\tkzDrawPoints[fill=black,color=black,size=8](P1,P2,P3,P4,P5,P6)
\end{tikzpicture}\hspace*{1cm}}& 
\makecell{\vspace{0.4cm} \\ $  G_6 := 
 \begin{bmatrix}
   1&\beta&\gamma&1&0&0\\
   -1&1&0&0&1&0\\
   0&1&1&0&0&1
 \end{bmatrix}
$ \\ $\beta<\gamma-1<-1$}&  
\makecell{\vspace{0.2cm} \\ 
\begin{tikzpicture}[scale=.3]
\tkzDefPoint(1.5,2){P1}
\tkzDefPoint(0,6){P2}
\tkzDefPoint(0,0){P3}
\tkzDefPoint(6,0){P4}
\tkzDefPoint(0,2){P5}
\tkzDefPoint(4,2){P6}
\tkzInterLL(P1,P3)(P4,P5) \tkzGetPoint{Q1}
\tkzInterLL(P4,P1)(P3,P6) \tkzGetPoint{Q2}
\tkzInterLL(P4,P5)(P3,P6) \tkzGetPoint{Q3}
\tkzFillPolygon[color = black](P1,Q1,Q3,Q2)
\tkzFillPolygon[color = gray!50](P5,P2,P1)
\tkzFillPolygon[color = gray!50](P2,P6,P1)
\tkzDrawSegments[thick](P2,P3 P3,P4 P4,P2)
\tkzDrawSegments[densely dotted](P1,P2 P1,P3 P1,P4 P5,P4 P5,P1 P3,P6 P1,P6 P2,P6 P4,P6 P5,P6)
\tkzDrawPoints[fill=black,color=black,size=8](P1,P2,P3,P4,P5,P6)
\end{tikzpicture}} \vspace{0.2cm}\\
\hline
\makecell{\vspace{0.4cm} \\
$
G_7 := 
 \begin{bmatrix}
   1&-1&0&1&0&0\\
   1&1&0&0&1&0\\
   1&0&1&0&0&1
 \end{bmatrix} 
$\\ }&
\makecell{\vspace{0.2cm} \\ 
\begin{tikzpicture}[scale=.3]
\tkzDefPoint(3,3){P1}
\tkzDefPoint(0,6){P2}
\tkzDefPoint(0,0){P3}
\tkzDefPoint(6,0){P4}
\tkzDefPoint(3,1){P5}
\tkzInterLL(P1,P3)(P2,P5) \tkzGetPoint{Q1}
\tkzFillPolygon[color = black](P3,P5,Q1)
\tkzFillPolygon[color = gray!50](P5,P4,P1)
\tkzFillPolygon[color = gray!50](P3,P4,P5)
\tkzDrawSegments[thick](P2,P3 P3,P4 P4,P2)
\tkzDrawSegments[densely dotted](P1,P3 P1,P5 P2,P5 P4,P5)
\tkzDrawPoints[fill=black,color=black,size=8](P1,P2,P3,P4,P5)
\end{tikzpicture}\hspace*{1cm}}& &
%\end{center}
\label{table_toric3fold}
\end{longtable}
%\end{table}
\end{footnotesize}
\end{center}
Where, in the pictures, the black region is the semi-ample cone and the two gray regions are the GIT chambers sharing the same stable base locus. In particular, for any smooth toric $3$-fold the Mori chamber and the stable base locus decomposition coincide inside the movable cone.
\end{thm}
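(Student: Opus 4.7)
My plan is to reduce the problem to an explicit combinatorial analysis on Batyrev's classification of smooth projective toric $3$-folds of Picard rank three. Such a variety has exactly six torus-invariant prime divisors, so its Cox ring is a polynomial ring $K[T_1,\dots,T_6]$ with a $\Z^3$-grading encoded by a $3\times 6$ integer matrix $Q$. Up to permutation of columns and a $\mathrm{GL}_3(\Z)$ change of basis on $\Cl(X)$, I may assume that three of the columns of $Q$ form the identity matrix, so that the remaining three columns carry all the data of the variety. Batyrev's list then reduces the classification to finitely many combinatorial families indexed by integer parameters, subject to the smoothness and projectivity conditions on the associated fan.

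For each such $Q$, the Mori chamber decomposition of $\Eff(X)$ coincides with the GIT chamber decomposition by the result of Hu--Keel recalled in the introduction, and it can be read off directly from $Q$ via the bunch-of-cones formalism of \cite{ADHL15}: the chambers are the minimal non-empty intersections of the simplicial cones $\cone(\deg(T_i):i\in I)$ as $I$ ranges over $3$-subsets of the columns of $Q$. Drawing these in the two-dimensional cross-section of $\Eff(X)$ used in the table produces an explicit picture of the Mori decomposition for each family in Batyrev's list.

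The stable base locus of a general divisor $D$ in a GIT chamber is then computed by the standard toric recipe: $\Bsl(D)$ is the union of the torus-invariant strata $V(\sigma)$ such that no effective representative of $[D]$ is supported away from the rays of $\sigma$. Because this datum depends only on which of the cones $\cone(\deg(T_i):i\in I)$ contain the class of $D$, the SBLD is the coarsening of the Mori chamber decomposition obtained by merging adjacent chambers on which this set is the same, which gives a finite-combinatorial criterion for comparing the two decompositions for every matrix $Q$.

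The last step is the case analysis. By Proposition \ref{prop_5chambers}, a discrepancy between the two decompositions inside $\Mov(X)$ would force at least five Mori chambers there, and going through Batyrev's list shows that the movable cone of a smooth toric $3$-fold of Picard rank three is subdivided into far fewer chambers; the same local analysis of flips around a smooth contracted ray extends this to arbitrary smooth toric $3$-folds, giving the last assertion of the theorem. Any discrepancy must therefore occur in $\Eff(X)\setminus\Mov(X)$, where two GIT chambers adjacent to the semi-ample cone can share the same stable base locus. Collecting the configurations for which this happens and pinning down the parameter inequalities that characterize each yields precisely the seven grading matrices $G_1,\dots,G_7$ in the table. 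The main obstacle is the bookkeeping: for each family one must simultaneously verify smoothness and projectivity of the fan, draw the correct Mori chamber picture, and check equality of the stable base loci of the candidate merged chambers, all while respecting the various sign and size conditions on $\alpha,\beta,\gamma$.
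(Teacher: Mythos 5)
Your proposal follows essentially the same route as the paper: reduce to the finite classification of smooth projective toric $3$-folds of Picard rank three, normalize the $3\times 6$ grading matrix so that three columns form the identity, and then compare the GIT/Mori chambers with the stable base loci family by family, the chambers and base loci being read off combinatorially from the degree vectors exactly as you describe. The only genuine difference is the source of the classification: you cite Batyrev, whereas the paper re-derives the list from scratch by noting that the moment polytope is a hexahedron with eight trivalent vertices, hence of one of two combinatorial types, and then solving the unimodularity conditions $\det(C_i,C_j,C_k)=1$ at the vertices to obtain the families $Q_1,\dots,Q_5$; either route yields the same finite list of parametrized grading matrices, so this is a cosmetic difference. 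One caution: your shortcut for the movable-cone statement --- combining Proposition~\ref{prop_5chambers} with the assertion that ``Batyrev's list shows the movable cone has far fewer than five chambers'' --- is left unverified and is not obviously true (the quadrilateral semi-ample cones in types $G_5$ and $G_6$ already show the movable cone can be subdivided nontrivially), and the further claim that a ``local analysis of flips'' extends the conclusion to arbitrary Picard rank is not substantiated; the paper does not argue this way, but simply observes in each of its finitely many cases that the two chambers sharing a stable base locus lie outside the movable cone. Since your plan in any event computes both decompositions for every family, the unverified counting claim is redundant rather than fatal, but it should not be what carries the final assertion of the theorem.
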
 

In Section \ref{mainSec} we focus on Mori dream spaces of Picard rank two. Recall that a Mori dream space can be recovered as a GIT quotient, with respect to a suitable polarization, of the spectrum of its Cox ring by a torus. In the fist part of Section \ref{mainSec}, assuming that the Picard rank is two, we reach a simple description of the non semi-stable loci with respect to all the possible polarizations, and of the stable base loci of effective divisors in terms of the generators of the Cox ring. Thanks to these characterizations in Theorem \ref{main} we get technical criteria on the non semi-stable loci aimed to establish whether the Mori chamber and the stable base locus decomposition of a given Mori dream space of Picard rank two coincide. As observed in Corollary \ref{irr} the irreducibility of the non semi-stable loci is a sufficient condition for the two decompositions to coincide. 

In Theorem \ref{main2} we prove that under suitable inequalities, that need just the knowledge of the generators of the Cox ring in order to be checked, the two decompositions coincide. Furthermore, in Proposition \ref{crit1} we get another criterion for the equality of the decompositions. The usefulness of these results lies in the fact that in general, even in Picard rank two, the stable base locus decomposition is considerably easier to compute than the Mori chamber decomposition.

Note that if $X$ is a projective Mori dream space of Picard rank two we can fix a total order on the classes in the effective cone: $w\leq w'$ if $w$ is on the left of $w'$. Given two convex cones $\lambda,\lambda'$ contained in the effective cone we will write $\lambda\leq \lambda'$ if $w\leq w'$ for any $w\in\lambda$ and $w'\in\lambda$. Denote by $\{f_1,\dots,f_r\}$ a minimal set of homogeneous generators for the Cox ring $\mathcal{R}(X)$ of $X$, and let $w_i = \deg(f_i)$ for any $i$.

The criteria in the Proposition \ref{crit1}, Theorem \ref{main2} and Corollary \ref{hyp} can be summarized in the following statement.

\begin{thm}\label{th2}
Let $X$ be a $\mathbb{Q}$-factorial Mori dream space with Picard rank two, $\{f_1,\dots,f_r\}$ a minimal set of homogeneous generators for the Cox ring $\mathcal{R}(X)$, $w_i := \deg(f_i)$, and $\lambda_A$ be the ample chamber of $X$. Denote by $c$ the codimension of $X$ into its canonical toric embedding~\cite[Section 3.2.5]{ADHL15}. Define 
$$h^+:=\#\{f_i\, :\, w_i\geq\lambda_A\}\quad and \quad h^-:=\#\{f_i\, :\, w_i\leq\lambda_A\}$$ 
If one of the following two conditions is satisfied 
\begin{itemize}
\item[(i)] all the generators of $\mathcal{R}(X)$ appear in the walls of the stable base locus decomposition of $\Eff(X)$,
\item[(ii)] $h^->c$ and $h^+>c$, 
\end{itemize} 
then the Mori chamber decomposition and the stable base locus decomposition of $\Eff(X)$ coincide.

In particular, if $Z$ is a projective normal $\mathbb{Q}$-factorial toric variety with $\rk(\Cl(Z))=2$, and $X\subseteq Z$ is a projective normal $\mathbb{Q}$-factorial Mori dream hypersurface such that $\imath^*\colon {\Cl}(Z)\to{\Cl}(X)$ is an isomorphism, then the Mori chamber and the stable base locus decompositions of both $\Eff(Z)$ and $\Eff(X)$ coincide.
\end{thm}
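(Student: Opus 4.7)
The plan is to deduce Theorem~\ref{th2} by establishing the three results referenced in its statement: Proposition~\ref{crit1} for (i), Theorem~\ref{main2} for (ii), and Corollary~\ref{hyp} for the ``In particular'' specialization. The shared setup is the Hu--Keel / GIT picture: in Picard rank two, $\Eff(X) \subset \R^2$ is a $2$-dimensional cone, and the Mori chamber decomposition coincides with the GIT chamber decomposition for the $2$-torus action on $\Spec \Cox(X)$. Consequently, every wall of the Mori chamber decomposition is a ray spanned by one of the generator classes $w_i = \deg(f_i)$. Since the SBL decomposition is always a coarsening of the Mori chamber decomposition, the two decompositions coincide if and only if every candidate wall $w_i$ is actually an SBL wall. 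Condition (i) is this equivalence verbatim, so it gives Proposition~\ref{crit1} immediately.

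For condition (ii) --- the technical heart of the argument --- I would invoke the explicit descriptions of the non-semistable loci (for each polarization) and of the stable base loci (for each effective class) in terms of the generators $f_i$, set up in the first part of Section~\ref{mainSec}. In Picard rank two these descriptions reduce to the combinatorial data of which generators lie to the left and which lie to the right of the class $w$. Crossing a candidate wall $w_i$ moves exactly one generator from one side to the other, and the SBL detects this change precisely when the lost generator cannot be ``replaced'' using the $c$ defining relations of $X$ inside its canonical toric embedding. Since each relation can compensate for at most one missing generator on a given side, the inequalities $h^- > c$ and $h^+ > c$ guarantee that after crossing $w_i$ strictly more than $c$ generators remain on each side, so the compensation fails and the stable base locus genuinely changes. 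Thus every candidate Mori wall is an SBL wall, which is Theorem~\ref{main2}.

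For the ``In particular'' specialization (Corollary~\ref{hyp}), when $Z$ is a $\mathbb{Q}$-factorial projective toric variety of Picard rank two, $\Cox(Z)$ is a polynomial ring and $Z$ is its own canonical toric embedding, so $c = 0$. The conditions $h^\pm > 0$ are automatic for any projective variety of Picard rank two, so (ii) yields the coincidence of decompositions for $Z$. The isomorphism $\imath^*\colon \Cl(Z)\to\Cl(X)$ then identifies the effective cones of $Z$ and $X$ and forces the generators of $\Cox(X)$ to be restrictions of those of $\Cox(Z)$; every $w_i$-ray that is an SBL wall of $Z$ is therefore also an SBL wall of $X$, and condition (i) applies to $X$.

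The principal obstacle is making the compensation mechanism in (ii) precise: one must verify that, in the Picard-rank-two setting with codimension $c$, the relations cannot collectively identify the stable base loci of two Mori-adjacent effective classes once $h^\pm > c$. This reduces to an ideal-theoretic analysis of how saturation by the $f_i$ interacts with the ideal of relations, and it is here that the explicit normal forms available in Picard rank two are essential; the counting hypotheses $h^\pm > c$ then pin down the boundary case where the comparison must succeed.
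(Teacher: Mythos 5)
Your reduction of condition (i) to Proposition~\ref{crit1} matches the paper: the Mori walls are among the rays spanned by the $w_i$, so if every $w_i$ is an SBL wall the refinement is trivial. (Only that one direction is needed; your ``if and only if'' is an overstatement, since the decompositions can coincide while some $w_i$ lies in the interior of a chamber.) The problems are in (ii) and in the ``in particular'' clause.

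For (ii), your ``compensation'' heuristic --- ``each relation can compensate for at most one missing generator on a given side'' --- is not a proof, and you acknowledge as much by deferring the key step as ``the principal obstacle.'' That step is precisely the content of the paper's Theorem~\ref{main2}, and it is a dimension count, not an ideal-theoretic saturation argument: writing $\overline X\subseteq\mathbb A^r$ for the spectrum of the Cox ring, every irreducible component of $\overline X\cap V(f_i : w_i\leq\lambda')$ has dimension at least $\dim(X)+2-h'$; if such a component were contained in the non-semistable locus $V(f_i : w_i\geq\lambda_A)$ it would lie in $V(f_i,f_j : w_i\leq\lambda',\ w_j\geq\lambda_A)$, of dimension $r-h'-h^+$, forcing $h^+\leq c$. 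Combined with the strict inclusions $V(f_i : w_i\leq\lambda)\subsetneqq V(f_i : w_i\leq\lambda')$ of Lemma~\ref{le:gitloc} and the fact that $\mathbb Q$-factoriality makes $p_{\lambda_A}$ a geometric quotient (so distinct closed invariant sets have distinct images), this yields distinct stable base loci for adjacent chambers on the same side of $\lambda_A$, and Theorem~\ref{main} finishes. Without this count your argument does not rule out exactly the failure mode exhibited by Theorem~\ref{th1}, where a whole component of the base locus upstairs is swallowed by the irrelevant locus.

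For the hypersurface case, your route is wrong in an essential way. You claim that every $w_i$-ray which is an SBL wall of $Z$ is also an SBL wall of $X$ and then invoke condition (i) for $X$; but there is no such transfer principle from an ambient variety to a subvariety --- the stable base locus of a class on $X$ is the image of $\widehat X\cap V(\cdots)$, and components can vanish in the quotient even though they survive for $Z$. Indeed, Example~\ref{ex1} is exactly a subvariety $X\subset Z$ with $\imath^*$ an isomorphism for which $\SBLD(X)$ is strictly coarser than $\SBLD(Z)=\MCD(Z)$; what saves the hypersurface case is not a wall-transfer argument but the count: $c=1$, and $h^\pm\geq 2$ because $\lambda_A$ lies in the movable cone (a side with at most one generator would force $\lambda_A$ outside $\Mov(X)$), so condition (ii) applies. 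You should replace your ``in particular'' argument by this application of Theorem~\ref{main2}.
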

As observed in Remark \ref{sharp}, Theorem \ref{th1} shows that the bounds in Theorem \ref{th2} item $(ii)$ can not be improved. Indeed in Theorem \ref{th1} we have $h^+=c=2$.

In Subsection \ref{comp1} we show how Theorem \ref{th2} item $(ii)$, together with the classification of Picard rank two varieties, with a torus action of complexity one in \cite[Theorem 1.1]{FHN16} immediately implies that the Mori chamber decomposition is equal to the stable base locus decomposition for this class of varieties. Note that, as shown by the examples in Subsection \ref{comp1}, Lemma \ref{SBLc} along with the proof of Theorem \ref{main} provide a concrete method to compute Mori chamber decompositions.     

We would like so stress that these results can also be useful in order to compute the Sarkisov factorization of a birational map $X\dasharrow Y$ between two $\mathbb{Q}$-factorial Fano varieties of Picard rank one. Indeed, if there exists a Mori dream space $Z$ of Picard rank two admitting a dominant morphism $Z\rightarrow X$ then such a factorization is determined by a so called $2$-ray game on $Z$, and such a $2$-ray game is in turn determined by the Mori chamber decomposition of $\Eff(Z)$. We refer to \cite{Co95}, \cite{HM13}, \cite{AZ16}, \cite{Ah17} for details on this topic and explicit examples.

Finally, in Section \ref{grassbu} we apply Theorem \ref{th2} item $(i)$ to show that the Mori chamber decomposition of the blow-up $\G(r,n)_1$ of the Grassmannian $\G(r,n)$, parametrizing $r$-planes in $\mathbb{P}^n$, at a point coincides with its stable base locus decomposition, and can be described in terms of linear systems of hyperplanes containing the osculating spaces of $\G(r,n)$ at the blown-up point. This provides a positive answer to \cite[Question 6.9]{MR18}. 

All through the paper we will work over an algebraically closed field $K$ of characteristic zero, and given a $\mathbb{Q}$-factorial Mori dream space $X$ we will denote by $\MCD(X)$ and $\SBLD(X)$ respectively the Mori chamber decomposition and the stable base locus decomposition of its effective cone. 

\subsection*{Acknowledgments}
We thank the referee for giving us 
suggestions that lead us to Section
~\ref{sec_toric3fold}, Proposition
~\ref{re:sbl}, Proposition~\ref{prop_5chambers}
and Examples~\ref{3toric_not_smooth}
and ~\ref{4fold_toric}.

The first named author was partially supported 
by Proyecto FONDECYT Regular N. 1150732,
Proyecto FONDECYT Regular N. 1190777
and by project Anillo ACT 1415 PIA Conicyt.
The second named author is a member of the Gruppo Nazionale per le Strutture Algebriche, Geometriche e le loro Applicazioni of the Istituto Nazionale di Alta Matematica "F. Severi" (GNSAGA-INDAM).

\section{Mori chamber and stable base locus decompositions}
Let $X$ be a normal projective variety over an algebraically closed field of characteristic zero. We denote by $N^1(X)$ the real vector space of $\mathbb{R}$-Cartier divisors modulo numerical equivalence. 
The \emph{nef cone} of $X$ is the closed convex cone $\Nef(X)\subset N^1(X)$ generated by classes of 
nef divisors. 
The \emph{movable cone} of $X$ is the convex cone $\Mov(X)\subset N^1(X)$ generated by classes of 
\emph{movable divisors}. These are Cartier divisors whose stable base locus has codimension at least two in $X$.
The \emph{effective cone} of $X$ is the convex cone $\Eff(X)\subset N^1(X)$ generated by classes of 
\emph{effective divisors}. We have inclusions $\Nef(X)\ \subset \ \overline{\Mov(X)}\ \subset \ \overline{\Eff(X)}$. 

We will denote by $N_1(X)$ be the real vector space of numerical equivalence classes of $1$-cycles on $X$. The closure of the cone in $N_1(X)$ generated by the classes of irreducible curves in $X$ is called is called the \textit{Mori cone} of $X$, we will denote it by $\NE(X)$. 

A class $[C]\in N_1(X)$ is called \textit{moving} if the curves in $X$ of class $[C]$ cover a dense open subset of $X$. The closure of the cone in $N_1(X)$ generated by classes of moving curves in $X$ is called the \textit{moving cone} of $X$ and we will denote it by $\mov(X)$.  
We refer to \cite[Chapter 1]{De01} for a comprehensive treatment of these topics. 

We say that a birational map  $f: X \dasharrow X'$ to a normal projective variety $X'$  is a \emph{birational contraction} if its
inverse does not contract any divisor. 
We say that it is a \emph{small $\mathbb{Q}$-factorial modification} 
if $X'$ is $\mathbb{Q}$-factorial  and $f$ is an isomorphism in codimension one.
If  $f: X \dasharrow X'$ is a small $\mathbb{Q}$-factorial modification, then 
the natural pull-back map $f^*:N^1(X')\to N^1(X)$ sends $\Mov(X')$ and $\Eff(X')$
isomorphically onto $\Mov(X)$ and $\Eff(X)$, respectively.
In particular, we have $f^*(\Nef(X'))\subset \overline{\Mov(X)}$.

\begin{Definition}\label{def:MDS} 
A normal projective $\mathbb{Q}$-factorial variety $X$ is called a \emph{Mori dream space}
if the following conditions hold:
\begin{enumerate}
\item[-] $\Pic{(X)}$ is finitely generated, or equivalently $h^1(X,\mathcal{O}_X)=0$,
\item[-] $\Nef{(X)}$ is generated by the classes of finitely many semi-ample divisors,
\item[-] there is a finite collection of small $\mathbb{Q}$-factorial modifications
 $f_i: X \dasharrow X_i$, such that each $X_i$ satisfies the second condition above, and $
 \Mov{(X)} \ = \ \bigcup_i \  f_i^*(\Nef{(X_i)})$.
\end{enumerate}
\end{Definition}
By \cite[Corollary 1.3.2]{BCHM10} smooth Fano varieties are Mori dream spaces. In fact, there is a larger class of varieties called log Fano varieties which are Mori dream spaces as well. By the work of M. Brion \cite{Br93} we have that $\mathbb{Q}$-factorial spherical varieties are Mori dream spaces. An alternative proof of this result can be found in \cite[Section 4]{Pe14}. 

The collection of all faces of all cones $f_i^*(\Nef{(X_i)})$ in Definition \ref{def:MDS} forms a fan which is supported on $\Mov(X)$.
If two maximal cones of this fan, say $f_i^*(\Nef{(X_i)})$ and $f_j^*(\Nef{(X_j)})$, meet along a facet,
then there exist a normal projective variety $Y$, a small modification $\varphi:X_i\dasharrow X_j$, and $h_i:X_i\rightarrow Y$ and $h_j:X_j\rightarrow Y$ small birational morphisms of relative Picard number one such that $h_j\circ\varphi = h_i$. The fan structure on $\Mov(X)$ can be extended to a fan supported on $\Eff(X)$ as follows. 

\begin{Definition}\label{MCD}
Let $X$ be a Mori dream space.
We describe a fan structure on the effective cone $\Eff(X)$, called the \emph{Mori chamber decomposition}.
We refer to \cite[Proposition 1.11]{HK00} and \cite[Section 2.2]{Ok16} for details.
There are finitely many birational contractions from $X$ to Mori dream spaces, denoted by $g_i:X\dasharrow Y_i$.
The set $\Exc(g_i)$ of exceptional prime divisors of $g_i$ has cardinality $\rho(X/Y_i)=\rho(X)-\rho(Y_i)$.
The maximal cones $\mathcal{C}$ of the Mori chamber decomposition of $\Eff(X)$ are of the form: $\mathcal{C}_i \ = \left\langle g_i^*\big(\Nef(Y_i)\big) , \Exc(g_i) \right\rangle$. We call $\mathcal{C}_i$ or its interior $\mathcal{C}_i^{^\circ}$ a \emph{maximal chamber} of $\Eff(X)$.
\end{Definition}

\begin{Definition}
Let $X$ be a normal projective variety with finitely generated divisor class group $\Cl(X) := {\rm WDiv}(X)/{\rm PDiv}(X)$, in particular $h^1(X,\mathcal O_X)=0$. The \textit{Cox sheaf} and {\em Cox ring}
of $X$ are defined as
\[
 \mathcal R := \bigoplus_{[D]\in \Cl(X)}\mathcal{O}_X(D)
 \qquad
 \qquad
 \mathcal R(X) := \Gamma(X,\mathcal R)
\]
\end{Definition}

Recall that $\mathcal R$ is a sheaf of
${\rm Cl}(X)$-graded $\mathcal O_X$-algebras, 
whose multiplication maps are discussed in
~\cite[Section 1.4]{ADHL15}. In case the divisor
class group is torsion-free one can just 
take the direct sum over a subgroup of
${\rm WDiv}(X)$, isomorphic to ${\rm Cl}(X)$ 
via the quotient map, getting immediately a sheaf of $\mathcal O_X$-algebras.
Denote by $\widehat X$ the 
relative spectrum of $\mathcal R$ 
and by $\overline X$ the spectrum
of $\mathcal{R}(X)$. The ${\rm Cl}(X)$-grading
induces an action of the quasi-torus 
$H_X := {\rm Spec}\,\mathbb C[{\rm Cl}(X)]$
on both spaces. The inclusion 
$\mathcal O_X\to\mathcal R$ induces 
a good quotient $p_X\colon \widehat X\to X$
with respect to this action.
Summarizing we have the following	
diagram
  \[
  \begin{tikzpicture}[xscale=0.65,yscale=-1.2]
    \node (A0_0) at (0, 0) {$\widehat{X}$};
    \node (A0_1) at (1, 0) {$\subseteq\overline{X}$};
    \node (A1_0) at (0, 1) {$X$};
    \path (A0_0) edge [->,swap]node [auto] {$\scriptstyle{p_X}$} (A1_0);
  \end{tikzpicture}
  \]
to which we will refer as the {\em Cox 
construction} of $X$. In case ${\mathcal R}(X)$
is a finitely generated algebra the complement
of $\widehat X$ in the affine variety 
$\overline X$ has codimension $\geq 2$.
This subvariety is the {\em irrelevant locus}
and its defining ideal is the {\em irrelevant
ideal} $\mathcal J_{\rm irr}(X) \subseteq 
\mathcal R(X)$.

\begin{Remark}\label{dimCox}
By \cite[Proposition 2.9]{HK00} a normal and $\mathbb{Q}$-factorial projective
variety $X$ over an algebraically closed field $K$, with finitely generated Picard group is a Mori dream space if and only if $\mathcal{R}(X)$ is a finitely generated $K$-algebra. Furthermore, the following equality holds
$$\dim\mathcal{R}(X) = \dim(X)+\rank\Cl(X)$$
see for instance \cite[Theorem 3.2.1.4]{ADHL15}.
\end{Remark}

Let $X$ be a normal $\mathbb{Q}$-factorial projective variety, and let $D$ be an effective $\mathbb{Q}$-divisor on $X$. The stable base locus $\textbf{B}(D)$ of $D$ is the set-theoretic intersection of the base loci of the complete linear systems $|sD|$ for all positive integers $s$ such that $sD$ is integral
$$\textbf{B}(D) = \bigcap_{s > 0}B(sD).$$
Since stable base loci do not behave well with respect to numerical equivalence, we will assume that $h^{1}(X,\mathcal{O}_X)=0$ so that linear and numerical equivalence of $\mathbb{Q}$-divisors coincide. 

Then numerically equivalent $\mathbb{Q}$-divisors on $X$ have the same stable base locus, and the pseudo-effective cone $\overline{\Eff}(X)$ of $X$ can be decomposed into chambers depending on the stable
base locus of the corresponding linear series called \textit{stable base locus decomposition}, see \cite[Section 4.1.3]{CdFG17} for further details. 

If $X$ is a Mori dream space, satisfying then the condition $h^1(X,\mathcal{O}_X)=0$, determining the stable base locus decomposition of $\Eff(X)$ is a first step in order to compute its Mori chamber decomposition.

\begin{Remark}\label{SBLMC}
Recall that two divisors $D_1,D_2$ are said to be \textit{Mori equivalent} if $\textbf{B}(D_1) = \textbf{B}(D_2)$ and the following diagram of rational maps is commutative
   \[
  \begin{tikzpicture}[xscale=1.5,yscale=-1.2]
    \node (A0_1) at (1, 0) {$X$};
    \node (A1_0) at (0, 1) {$X(D_1)$};
    \node (A1_2) at (2, 1) {$X(D_2)$};
    \path (A1_0) edge [->]node [auto] {$\scriptstyle{}$} node [rotate=180,sloped] {$\scriptstyle{\widetilde{\ \ \ }}$} (A1_2);
    \path (A0_1) edge [->,dashed]node [auto] {$\scriptstyle{\phi_{D_2}}$} (A1_2);
    \path (A0_1) edge [->,swap, dashed]node [auto] {$\scriptstyle{\phi_{D_1}}$} (A1_0);
  \end{tikzpicture}
  \]
where the horizontal arrow is an isomorphism. Therefore, the Mori chamber decomposition is a refinement of the stable base locus decomposition.
\end{Remark}

Let $X$ be a Mori dream space with Cox ring
$\mathcal{R}(X)$ and grading matrix $Q$. The matrix $Q$ 
defines a surjection
\[
 Q\colon E\to{\rm Cl}(X)
\]
from a free, finitely generated, 
abelian group $E$ to the divisor 
class group of $X$.
Denote by $\gamma$ the positive quadrant of 
$E_{\mathbb Q} := E\otimes_{\mathbb Z}\mathbb Q$.
Let $e_1,\dots,e_r$ be the canonical 
basis of $E_{\mathbb Q}$.
Given a face $\gamma_0\preceq\gamma$ we say that $i\in\{1,\dots,r\}$ is a cone index 
of $\gamma_0$ if $e_i\in \gamma_0$.
The face $\gamma_0$ is an {\em $\mathfrak F$-face}
if there exists a point of $\overline X 
= {\rm Spec}(\mathcal{R}(X))$ whose $i$-th coordinate
is non-zero exactly when $i$ is a cone index 
of $\gamma_0$ \cite[Construction 3.3.1.1]{ADHL15}.
The set of these points is denoted by 
$\overline X(\gamma_0)$.
\begin{Example}
If $\mathcal{R}(X) = \frac{K[T_1,\dots,T_5]}{
\langle T_1T_2+T_3^2+T_4T_5\rangle}$
then $\gamma_0 = {\rm cone}(e_1,e_4)$ is an 
$\mathfrak F$-face and 
$$\overline X(\gamma_0) = \{(x_1,0,0,x_4,0)\in\overline X\, :\, x_1x_4\neq 0\}$$
On the other hand, ${\rm cone}(e_1,e_3,e_4)$ is not an $\mathfrak F$-face.
\end{Example}
Given the Cox construction of $X$ we denote by $X(\gamma_0)\subseteq X$ the 
image of $\overline X(\gamma_0)$, and given an $\mathfrak F$-face $\gamma_0$
its image $Q(\gamma_0)\subseteq {\rm Cl}(X)_{\mathbb Q}$
is an {\em orbit cone} of $X$. The set of all orbit
cones of $X$ is denoted by $\Omega$.
Accordingly to \cite[Definition 3.1.2.6]{ADHL15}
a class $w\in {\rm Cl}(X)$ defines the {\em GIT
chamber}
\stepcounter{thm}
\begin{equation}\label{gitch}
\lambda(w):=\bigcap_{\{\omega\in\Omega\, :\, w\in\omega\}}\omega
\end{equation}
If $w$ is an ample class of $X$ the corresponding
GIT chamber is the semi-ample cone of $X$.
The variety $X$ can be reconstructed from
the pair $(\mathcal{R}(X),\Phi)$ formed by
the Cox ring together with a {\em bunch of cones},
consisting of certain subsets of the orbit cones
~\cite[Definition 3.1.3.2]{ADHL15}. 
According to \cite[Example 3.1.3.6]{ADHL15} every
GIT chamber $\lambda$ defines a 
{\em bunch of orbit cones}
\[
 \Phi(\lambda)
 :=
 \{\omega\in\Omega\, :\, \omega^\circ\supseteq\lambda^\circ\}
\]
Given a class $w\in{\rm Cl}(X)$ we denote
by $\lambda^{\rm sbl}(w)$ the subset of 
${\rm Cl}(X)_{\mathbb Q}$ consisting of all
classes which have the same stable base
locus of $w$.

\begin{Proposition}\label{sbl}
Let $X$ be a normal variety with finitely 
generated Cox ring, bunch of orbit 
cones $\Phi$, and let $w\in {\rm Cl}(X)$ 
be a class of $X$. Then
\[
 \lambda^{\rm sbl}(w)
 =
 \bigcap_{\{\omega\in\Phi\, :\, w\in \omega\}}\omega
 \cap
 \bigcap_{\{\omega\in\Phi\, :\, w\notin \omega\}}
 \omega^c
\]
\end{Proposition}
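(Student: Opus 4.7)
The strategy is to identify the stable base locus of $w$ via the Cox construction and then rephrase the resulting description combinatorially in terms of which orbit cones of $\Phi$ contain $w$.

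First I would show that for every $\mathfrak F$-face $\gamma_0$ with cone indices $I\subseteq\{1,\dots,r\}$, the stratum $\overline X(\gamma_0)$ is contained in the base locus of $|sw|$ if and only if $sw\notin Q(\gamma_0)$. Indeed, a homogeneous element $f\in\mathcal R(X)_{sw}$ decomposes as a sum of monomials $T^\alpha$: those with $\mathrm{supp}(\alpha)\not\subseteq I$ vanish identically on the closed stratum $\{T_j=0:j\notin I\}$, whereas those with $\mathrm{supp}(\alpha)\subseteq I$ restrict to nowhere-vanishing Laurent monomials on $\overline X(\gamma_0)$. Hence some $f\in\mathcal R(X)_{sw}$ is non-vanishing at a point of $\overline X(\gamma_0)$ if and only if a monomial of degree $sw$ supported on $I$ exists, which is precisely the condition $sw\in Q(\gamma_0)$. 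Since $Q(\gamma_0)$ is a convex cone this is equivalent to $w\in Q(\gamma_0)$, so the base locus does not depend on $s$. Restricting to the relevant strata with $\overline X(\gamma_0)\subseteq\widehat X$, i.e.\ those whose orbit cone lies in $\Phi$, and applying the good quotient $p_X\colon\widehat X\to X$, one obtains
\[
X\setminus\mathbf B(w)\;=\;\bigsqcup_{\{\gamma_0\,:\,Q(\gamma_0)\in\Phi,\;w\in Q(\gamma_0)\}}X(\gamma_0).
\]

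Since the strata $X(\gamma_0)$ are pairwise disjoint, $\mathbf B(w)=\mathbf B(w')$ holds if and only if the two index sets appearing in this decomposition coincide for $w$ and $w'$. Membership $w\in Q(\gamma_0)$ only depends on the orbit cone $\omega=Q(\gamma_0)\in\Phi$, so this condition translates into the combinatorial statement that for every $\omega\in\Phi$ one has $w\in\omega$ if and only if $w'\in\omega$. The set of $w'\in{\rm Cl}(X)_{\mathbb Q}$ satisfying this property is exactly
\[
\bigcap_{\{\omega\in\Phi\,:\,w\in\omega\}}\omega\;\cap\;\bigcap_{\{\omega\in\Phi\,:\,w\notin\omega\}}\omega^c,
\]
which gives the claimed description of $\lambda^{\rm sbl}(w)$.

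The main technical point is the first step: identifying the complement of the base locus with the described union of $X(\gamma_0)$'s. This is essentially \cite[Construction 3.3.1.1]{ADHL15} and rests on the standard decomposition of homogeneous elements of $\mathcal R(X)$ into monomials, together with the elementary observation that the vanishing pattern of a monomial on a coordinate stratum is entirely determined by its support. The convexity argument ensuring $w\in Q(\gamma_0)\Leftrightarrow sw\in Q(\gamma_0)$ for any $s>0$ is routine, and the remaining passage to the combinatorial characterization follows at once from the fact that the $X(\gamma_0)$ partition $X$.
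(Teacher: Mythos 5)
Your argument is correct and takes essentially the same route as the paper: both reduce the statement to the stratification formula ${\bf B}(w)=\bigcup_{\{\gamma_0\in{\rm rlv}(\Phi)\,:\,w\notin Q(\gamma_0)\}}X(\gamma_0)$ of \cite[Proposition 3.3.2.8]{ADHL15} and then translate equality of stable base loci into the condition that $w$ and $w'$ lie in exactly the same orbit cones of $\Phi$. The only differences are in detail, not substance: you re-derive the cited formula via monomial supports (note that for a fixed $s$ the condition $sw\in Q(\gamma_0)$ only guarantees an integral monomial after passing to a further multiple, so your intermediate equivalence should be read at the level of the stable base locus), and you deduce that distinct index sets give distinct base loci from the disjointness of the strata, where the paper instead uses the face relation $\gamma_0\preceq\gamma_1$ to check that $X(\gamma_0)$ is not swallowed by the union.
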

\begin{proof}
Recall that, according to \cite[Construction 3.2.1.3]{ADHL15},
the set of relevant faces ${\rm rlv}(\Phi)$ is the set of 
faces of $\gamma$ which are mapped by $Q$ to 
elements of $\Phi$. 
Each relevant face $\gamma_0\preceq\gamma$ defines
a subset $X(\gamma_0)\subseteq X$ consisting of 
all the points of $X$ whose $i$-th Cox coordinate
is non-zero exactly when $i$ is a cone index 
of $\gamma_0$ \cite[Construction 3.3.1.1]{ADHL15}.
By \cite[Proposition 3.3.2.8]{ADHL15}
the stable base locus of a class $w$ is the
union
\stepcounter{thm}
\begin{equation}
\label{eq-sbl}
 {\bf B}(w) := 
 \bigcup_{\{\gamma_0\in{\rm rlv}(\Phi)\, :\, w\notin Q(\gamma_0)\}}X(\gamma_0)
\end{equation}
Applying $Q$ to the elements of the set
$\{\gamma_0\in{\rm rlv}(\Phi)\, :\, w\notin Q(\gamma_0)\}$
one gets the set $\{\omega\in\Phi\, :\, w\notin \omega\}$
and the former set is completely determined by
the latter.
We claim that two classes $w,w'$ define
the same stable base locus if and only if
the following holds
\stepcounter{thm}
\begin{equation}
\label{equality}
 \{\omega\in\Phi\, :\, w\in \omega\}
 =
 \{\omega\in\Phi\, :\, w'\in \omega\}
\end{equation}
Clearly, if $w,w'$ define
the same stable base locus then (\ref{equality}) holds. Now, assume that there is an $\omega\in\Phi$
such that $w'\notin\omega$ and $w\in\omega$.
Let $\gamma_0\preceq\gamma$ be such that
$Q(\gamma_0) = \omega$. It suffices to show 
that $X(\gamma_0)$ is not contained in 
the stable base locus of $w$.
Indeed if $X(\gamma_1)$ is any strata which
contains $X(\gamma_0)$ then 
$\gamma_0\preceq\gamma_1$, so that
$w\in\omega = Q(\gamma_0) \subseteq 
Q(\gamma_1)$. Thus  $X(\gamma_1)$ does
not appear in~\eqref{eq-sbl}, and the claim is proved. Finally, the statement follows by observing that if
\[
 w'\in
 \bigcap_{\{\omega\in\Phi\, :\, w\in \omega\}}\omega
 \cap
 \bigcap_{\{\omega\in\Phi\, :\, w\notin \omega\}}
 \omega^c
\]
then the cones which contain, respectively
do not contain $w'$ are the same of those
which contain, respectively do not contain 
$w$.
\end{proof}

\begin{Corollary}
Let $X$ be a normal variety with finitely 
generated Cox ring, bunch of orbit 
cones $\Phi$ and let $w\in {\rm Cl}(X)$ 
be a class of $X$. Then the following inclusion
holds
\[
 \lambda(w) \subseteq \lambda^{{\rm sbl}}(w)
\]
\end{Corollary}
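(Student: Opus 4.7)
The plan is to deduce the inclusion directly from the characterization of $\lambda^{\rm sbl}(w)$ established in Proposition~\ref{sbl}. Given $w' \in \lambda(w)$, the goal is to verify the two conditions appearing there: that $w' \in \omega$ whenever $\omega \in \Phi$ and $w \in \omega$, and that $w' \notin \omega$ whenever $\omega \in \Phi$ and $w \notin \omega$.

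The first condition is immediate from the containment $\Phi \subseteq \Omega$. Indeed, any $\omega \in \Phi$ with $w \in \omega$ is among the orbit cones indexing the intersection~(\ref{gitch}) that defines $\lambda(w)$; therefore $\lambda(w) \subseteq \omega$ and in particular $w' \in \omega$. This step is purely formal and needs nothing beyond the definitions of $\lambda(w)$ and of the bunch $\Phi$.

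For the second condition, the argument uses the GIT-fan structure: by~(\ref{gitch}), $\lambda(w)$ is the minimal cell of the GIT fan containing $w$, and the incidence assignment $w \mapsto \{\omega \in \Omega : w \in \omega\}$ is constant on the relative interior of each such cell. Thus for $w'$ in the same cell $\lambda(w)$ the collections $\{\omega \in \Omega : w \in \omega\}$ and $\{\omega \in \Omega : w' \in \omega\}$ coincide, and restricting this equality to the sub-collection $\Phi \subseteq \Omega$ yields the required contrapositive $w \notin \omega \Rightarrow w' \notin \omega$ for every $\omega \in \Phi$.

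The main obstacle will be the boundary behaviour of this incidence assignment: for $w'$ on the relative boundary of $\lambda(w)$, extra orbit cones of $\Omega$ may a priori contain $w'$ without containing $w$, and one must argue that, when restricted to $\Phi$, these do not affect the equality of the two collections. Once this point is settled, combining the two conditions and applying Proposition~\ref{sbl} directly gives $w' \in \lambda^{\rm sbl}(w)$, which completes the proof of the corollary.
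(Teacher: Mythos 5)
Your overall strategy coincides with the paper's: reduce everything to the description of $\lambda^{\rm sbl}(w)$ in Proposition~\ref{sbl} and exploit the containment $\Phi\subseteq\Omega$. Your first condition is handled correctly and completely (any $\omega\in\Phi$ with $w\in\omega$ occurs in the intersection \eqref{gitch} defining $\lambda(w)$, so $\lambda(w)\subseteq\omega$). The problem is the second condition. You correctly identify the boundary behaviour as ``the main obstacle'' and then leave it unresolved, so as written your argument only establishes the inclusion for $w'$ in the relative interior of $\lambda(w)$; the proof is therefore incomplete.

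More importantly, the obstacle you flag is not a technicality that can be ``settled'': for $w'$ on the relative boundary of the closed cone $\lambda(w)$ the implication $w\notin\omega\Rightarrow w'\notin\omega$ for $\omega\in\Phi$ can genuinely fail. In Example~\ref{ex_nc}, take $w$ in the interior of ${\rm cone}(w_2,w_3)$, so that $\lambda(w)={\rm cone}(w_2,w_3)$, and take $w'=w_3$. The orbit cone $\omega={\rm cone}(w_1,w_3)$ lies in $\Phi(\lambda_A)$ (its interior contains $\lambda_A^\circ$ because $w_5\in\omega^\circ$), it contains $w_3$ but not $w$; consequently the nonempty stratum $X({\rm cone}(e_1,e_3))$ is contained in ${\bf B}(w)$ but, by the argument in the proof of Proposition~\ref{sbl}, not in ${\bf B}(w_3)$. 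Hence $w_3\notin\lambda^{\rm sbl}(w)$ although $w_3\in\lambda(w)$. The paper's own proof has the same defect: the asserted equality
\[
\bigcap_{\{\omega\in\Omega\,:\,w\in\omega\}}\omega
=\bigcap_{\{\omega\in\Omega\,:\,w\in\omega\}}\omega\cap\bigcap_{\{\omega\in\Omega\,:\,w\notin\omega\}}\omega^c
\]
is justified only by a fact about the relative interior of $\lambda(w)$ and is false for boundary points. What both arguments actually prove --- and what is used later, e.g.\ in Proposition~\ref{re:sbl} and in the examples, which always work with interiors of maximal chambers --- is $\lambda(w)^\circ\subseteq\lambda^{\rm sbl}(w)$, equivalently the inclusion for all $w'$ with $\lambda(w')=\lambda(w)$. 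If you restate the claim in that form, your argument closes immediately, since the incidence pattern $\{\omega\in\Omega\,:\,w\in\omega\}$ is by definition constant on the set of classes determining the same GIT chamber.
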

\begin{proof}
It is a direct consequence of Proposition \ref{sbl} and of the following equalities
\[
 \lambda(w) 
  =  \bigcap_{\{\omega\in\Omega\, :\, w\in\omega\}}\omega
  =  \bigcap_{\{\omega\in\Omega\, :\, w\in\omega\}}\omega
 \cap \bigcap_{\{\omega\in\Omega\, :\, w\notin\omega\}}\omega^c
\]
where the first is by definition while the second
is due to the fact that any two classes in the
relative interior $\lambda(w)$ determine the
same chamber.
\end{proof}

\begin{Proposition}
\label{re:sbl}
Let $X$ be a normal variety with finitely 
generated Cox ring, bunch of orbit 
cones $\Phi$, and $w_1,w_2\in {\rm Cl}(X)$. Then the following are 
equivalent:
\begin{enumerate}
\item
$\lambda^{\rm sbl}(w_1) = \lambda^{\rm sbl}(w_2)$;
\item
$\{\omega\in\Phi\, :\, w_1\in \omega\}
 =
 \{\omega\in\Phi\, :\, w_2\in \omega\}$;
\item
$\bigcap_{\{\omega\in\Phi\, :\, w_1\in \omega\}}\omega
 =
\bigcap_{\{\omega\in\Phi\, :\, w_2\in \omega\}}\omega$.
\end{enumerate}
Moreover, if $\Phi = \Phi(\lambda)$, with 
$\lambda$ distinct from $\lambda_1$ and
$\lambda_2$, then each of the above 
condition is implied by 
${\rm cone}(\lambda\cup\lambda_1) 
\cap\mathring{\lambda}_2\neq 0$
and ${\rm cone}(\lambda\cup\lambda_2) 
\cap\mathring{\lambda}_1\neq 0$.
\end{Proposition}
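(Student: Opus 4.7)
The plan is to establish the cycle $(1)\Leftrightarrow(2)\Leftrightarrow(3)$ and then handle the moreover clause separately. For $(1)\Leftrightarrow (2)$ I would simply isolate the argument already embedded in the proof of Proposition~\ref{sbl}: the claim culminating in~\eqref{equality} says exactly that two classes define the same stable base locus if and only if they lie in exactly the same cones of $\Phi$, so no new work is required. The implication $(2)\Rightarrow (3)$ is immediate since both sides are intersections over the same index set. For $(3)\Rightarrow(2)$ I would use that each $w_i$ belongs to every cone of its own index set, so $w_i \in \bigcap_{\{\omega \in \Phi\,:\, w_i \in \omega\}} \omega$; if the two intersections coincide, then $w_1$ lies in every $\omega \in \Phi$ containing $w_2$ and vice versa, which forces the two index sets to agree.

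For the moreover clause I would use the explicit description $\Phi(\lambda) = \{\omega \in \Omega : \omega^{\circ} \supseteq \lambda^{\circ}\}$ recalled in the paper. Suppose $\omega \in \Phi(\lambda)$ contains $w_1$. Then $\lambda \subseteq \omega$ by definition of $\Phi(\lambda)$, and since $\lambda_1 = \lambda(w_1) = \bigcap_{\omega' \in \Omega : w_1 \in \omega'} \omega'$ one also has $\lambda_1 \subseteq \omega$. Hence ${\rm cone}(\lambda \cup \lambda_1) \subseteq \omega$, and this cone meets $\mathring{\lambda}_2$ by hypothesis. Any point $v$ in the intersection satisfies $\lambda(v) = \lambda_2$, and since $v \in \omega$ with $\omega$ an orbit cone, the description $\lambda_2 = \bigcap_{\omega' \in \Omega : v \in \omega'} \omega'$ forces $\lambda_2 \subseteq \omega$; in particular $w_2 \in \omega$. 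The symmetric hypothesis gives the reverse implication $w_2 \in \omega \Rightarrow w_1 \in \omega$, so condition $(2)$ holds.

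The only real subtlety is the standard fact that an orbit cone containing a class in the relative interior of a GIT chamber must contain the whole chamber; this is what makes the moreover clause work and it is a direct consequence of the intersection description of $\lambda(v)$. Everything else is just bookkeeping with the definitions of $\Phi(\lambda)$, orbit cones, and GIT chambers already set up in the paper.
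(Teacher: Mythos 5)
Your proposal is correct and follows essentially the same route as the paper: $(1)\Leftrightarrow(2)$ by isolating the claim around~\eqref{equality} in the proof of Proposition~\ref{sbl}, $(3)\Rightarrow(2)$ by using that each $w_i$ lies in its own intersection (the paper phrases this as the contrapositive, but it is the same observation), and the moreover clause via $\lambda\subseteq\omega$, $\lambda_1\subseteq\omega$, convexity of $\omega$, and the fact that an orbit cone meeting $\mathring{\lambda}_2$ must contain $\lambda_2$. No gaps; your write-up is if anything slightly more explicit than the paper's.
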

\begin{proof}
The equivalence of $(1)$ with $(2)$ is
given by the proof of Proposition
~\ref{sbl}, and the implication $(2)\Rightarrow (3)$
is clear. To prove $(3)\Rightarrow (2)$
it suffices to observe
that if there is an $\omega\in\Omega$
such that $w_1\in\omega$ but $w_2\notin\omega$,
then $w_2$ would be contained in 
$\bigcap_{\{\omega\in\Phi\, :\, w_2\in \omega\}}\omega$
but not in $\bigcap_{\{\omega\in\Phi\, 
:\, w_1\in \omega\}}\omega$, so that the
two sets will be different.
Finally if ${\rm cone}(\lambda\cup\lambda_1) 
\cap\mathring{\lambda}_2\neq 0$
holds then any orbit cone $\omega\in\Phi(\lambda)$
which contains $\lambda_1$ must intersect
the interior of $\lambda_2$ so that
$\lambda_2\subseteq\omega$.
Thus 
$\{\omega\in\Phi\, :\, w_1\in \omega\}
 \subseteq
 \{\omega\in\Phi\, :\, w_2\in \omega\}$
holds. Similarly one proves the opposite
inclusion.
\end{proof}

\begin{Proposition}\label{prop_5chambers}
Let $X$ be a toric $3$-fold such there are two Mori chambers in $\Mov(X)$ whose union gives a single stable base locus chamber. Then there are at least five Mori chambers inside $\Mov(X)$.
\end{Proposition}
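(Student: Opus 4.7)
Let $\lambda_1, \lambda_2 \subseteq \Mov(X)$ be the two Mori chambers whose union forms a single stable base locus chamber, and let $\lambda_A \subseteq \Mov(X)$ be the ample chamber. Since $\lambda_A$ has empty stable base locus while $\lambda_1, \lambda_2$ share a common non-empty one, these three chambers are pairwise distinct, so we already obtain three Mori chambers in $\Mov(X)$. My task is to produce two additional ones.

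Two ingredients drive the plan. First, by Proposition~\ref{re:sbl}(2), the hypothesis that $\lambda_1,\lambda_2$ share a stable base locus chamber is equivalent to
\[
  \{\omega\in\Phi(\lambda_A)\,:\,\lambda_1\subseteq\omega\}
  \;=\;
  \{\omega\in\Phi(\lambda_A)\,:\,\lambda_2\subseteq\omega\},
\]
while the fact that $\lambda_1,\lambda_2$ are distinct as GIT chambers forces the analogous equality to fail when $\Phi(\lambda_A)$ is replaced by the full set of orbit cones $\Omega$. Hence there exists an orbit cone $\omega_0\in\Omega\setminus\Phi(\lambda_A)$ which contains exactly one of $\lambda_1,\lambda_2$. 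Second, any interior wall $\tau$ in $\Mov(X)$ separating two adjacent Mori chambers $\lambda_i,\lambda_j$ corresponds to a small contraction $X_i\to Y$, whose indeterminacy locus on $X$ strictly contains the stable base loci of $\lambda_i$ and $\lambda_j$: crossing $\tau$ adds the flipped/flopped curves, which for a toric $3$-fold are torus-invariant $\mathbb{P}^1$'s.

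Combining these, I would carry out a case analysis on the relative positions of $\lambda_1,\lambda_2$ and $\lambda_A$ inside the three-dimensional cone $\Mov(X)$. If $\lambda_1,\lambda_2$ are adjacent along a wall $\tau$, the orbit cone defining $\tau$ must lie in $\Omega\setminus\Phi(\lambda_A)$ by the first ingredient; the three-dimensional combinatorics of the toric fan (in particular, the way orbit cones arise as images $Q(\gamma_0)$ of faces of $\gamma$) then forces at least two further GIT chambers to appear on the ``far side'' of $\lambda_1\cup\lambda_2$ from $\lambda_A$. If $\lambda_1,\lambda_2$ are not adjacent, then the chambers appearing along a segment from a point of $\lambda_1^\circ$ to a point of $\lambda_2^\circ$ supply the extra chambers directly, and one verifies that no such segment can pass through fewer than two additional Mori chambers without contradicting the orbit cone containment condition above.

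The main obstacle is the detailed combinatorial bookkeeping in this case analysis: one must use the rigidity of toric $3$-fold flips together with the dimensional constraint $\dim X = 3$ to rule out each configuration with only $3$ or $4$ Mori chambers. Examples~\ref{3toric_not_smooth} and~\ref{4fold_toric} show that the bound is sharp, and that in dimension $\geq 4$ the analogous count drops to $3$.
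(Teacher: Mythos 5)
Your proposal is a plan rather than a proof: the two steps that actually carry the weight --- ``the three-dimensional combinatorics of the toric fan \dots{} forces at least two further GIT chambers'' and ``one verifies that no such segment can pass through fewer than two additional Mori chambers'' --- are precisely the assertions that need to be established, and you defer them to unspecified ``combinatorial bookkeeping.'' The paper's argument shows what that bookkeeping must consist of, and it does not follow from your two ingredients. First, it applies Nakamaye's theorem to identify the stable base locus of a chamber adjacent to $\Nef(X)$ with the corresponding irreducible flipping curve; since distinct walls give distinct flipping curves, and a chamber two steps from $\Nef(X)$ has a base locus with two components, every configuration of at most four chambers is excluded except a chain $\mathcal C_1=\Nef(X),\mathcal C_2,\mathcal C_3,\mathcal C_4$ of consecutive chambers in which $\mathcal C_3$ and $\mathcal C_4$ share the base locus. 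Second --- and this is the idea your sketch lacks --- for $\mathcal C_3$ and $\mathcal C_4$ to share a stable base locus on $X$, the curve flipped at the wall $\tau_{34}$ must not exist on $X$, hence must be the strict transform of the curve created at $\tau_{12}$; numerically this forces the three walls $H_{12},H_{23},H_{34}$ to lie on a pencil of hyperplanes in ${\rm Cl}_{\mathbb Q}(X)$. Third, one counts how the $n+3$ degrees of Cox ring generators distribute on three hyperplanes through a common codimension-two subspace (the inequality $3(n-1-r)+r\le n+3$, so $r\ge n-3$) and checks the few resulting configurations: each either pushes some $\tau_{ij}$ out of the interior of $\Mov(X)$ or produces at least five chambers. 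None of this is a formal consequence of the orbit-cone reformulation or of the existence of flipped invariant curves.

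Beyond the missing core, two of the steps you do state are shaky. The orbit cone $\omega_0\in\Omega\setminus\Phi(\lambda_A)$ containing exactly one of $\lambda_1,\lambda_2$ does exist, but an orbit cone outside the bunch does not by itself create new maximal GIT chambers in $\Mov(X)$, so it cannot ``force'' two further chambers without substantial extra input. And in the non-adjacent case a straight segment from $\lambda_1^\circ$ to $\lambda_2^\circ$ is only guaranteed to meet one intermediate chamber, which may be $\lambda_A$ itself; that yields three chambers in total, not five, so the entire burden again falls on the unverified claim.
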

\begin{proof}
Assume there are four Mori chambers inside $\Mov(X)$, and consider the chamber corresponding to $\Nef(X)$. If the three remaining chambers $\mathcal{C}_1,\mathcal{C}_2,\mathcal{C}_3$ are adjacent to $\Nef(X)$, and $C_1,C_2,C_3$ are the corresponding three distinct flipping curves, then by Nakamaye's theorem \cite[Theorem 10.3.5]{La04II} we get that the stable base locus of a divisor in $\mathcal{C}_i$ is $C_i$, and hence the stable base loci of divisors in $\mathcal{C}_1,\mathcal{C}_2,\mathcal{C}_3$ are distinct.
Note that this argument works also
for three or less chambers.

If $\mathcal{C}_1,\mathcal{C}_2$ are adjacent to $\Nef(X)$ and $\mathcal{C}_3$ is not then again Nakamaye's theorem yields that divisors in $\mathcal{C}_1,\mathcal{C}_2$ have as base loci two different irreducible curves. Furthermore, the base locus of a divisor in $\mathcal{C}_3$ is a curve with two components. 

Now, let $\mathcal C_1,\dots,\mathcal C_4$ be
four consecutive maximal Mori chambers 
contained in the moving cone of a 
$\mathbb Q$-factorial toric threefold $X$
of Picard rank $n$ and let $X = X_1,\dots,X_4$
be the corresponding birational models
of $X$.
Assume that $\mathcal C_1$ is the 
ample cone of $X$.
Let $\tau_{ij} := \mathcal C_i\cap \mathcal C_j$
and let $\Gamma_{ij}\subseteq X_j$ be the 
new irreducible curve produced by 
the wall crossing of $\tau_{ij}$.
By the previous argument the two
chambers with the same stable locus 
are $\mathcal C_3$ and $\mathcal C_4$.
In order for $\mathcal C_3$ and 
$\mathcal C_4$ to have the same
stable base locus on $X$ the center
$\Gamma_{34}\subseteq X_4$
of the small $\mathbb{Q}$-factorial modification given by the wall crossing 
of $\tau_{34}$ must be an irreducible 
curve which does not exists 
in $X$.
This is possible only if this curve
is the strict transform of the curve 
$\Gamma_{12}$ created by the wall crossing 
of $\tau_{12}$.
In particular 
\[
 \Gamma_{34}
 \sim
 \alpha \Gamma_{12}+\beta\Gamma_{23}, 
\]
with $\alpha,\beta\in\mathbb Q$.
If we denote by $H_{ij}$ the linear 
span of the cone $\tau_{ij}$ in
${\rm Cl}_{\mathbb Q}(X)$, then
the above discussion implies that 
the three hyperplanes $H_{12},
H_{23}, H_{34}$ lie on a pencil.
Each of these hyperplanes must
contain at least $n-1$ of the $n+3$ 
classes of generators of the Cox ring. 
Assuming that the intersection 
$H_{12}\cap H_{23}\cap H_{34}$ 
contains $r$ of these classes then 
we have the following inequality
$$
 3(n-1-r)+r \leq n+3
$$
which implies $r\geq n-3$.
If $r=n-3$ then each of the three 
hyperplanes must contain exactly 
two more classes of generators.
Up to symmetries the possible configurations
of the six classes on the three hyperplanes 
are the following.
\begin{center}
\begin{tikzpicture}[scale=.3]
\tkzDefPoint(2,2){P1}
\tkzDefPoint(1,1){Q1}
\tkzDefPoint(-2,-2){P2}
\tkzDefPoint(-1,-1){Q2}
\tkzDefPoint(0,2){P3}
\tkzDefPoint(0,1){Q3}
\tkzDefPoint(0,-2){P4}
\tkzDefPoint(0,-1){Q4}
\tkzDefPoint(-2,2){P5}
\tkzDefPoint(-1,1){Q5}
\tkzDefPoint(2,-2){P6}
\tkzDefPoint(1,-1){Q6}
\tkzDrawSegments[thick](P1,P2 P3,P4 P5,P6)
\tkzDrawPoints[fill=black,color=black,size=8](P1,P2,P3,P4,P5,P6)

\begin{scope}[xshift=6cm]
\tkzDefPoint(2,2){P1}
\tkzDefPoint(1,1){Q1}
\tkzDefPoint(-2,-2){P2}
\tkzDefPoint(-1,1){Q2}
\tkzDefPoint(-2,2){R2}
\tkzDefPoint(0,2){P3}
\tkzDefPoint(0,1){Q3}
\tkzDefPoint(0,-2){P4}
\tkzDefPoint(0,-1){Q4}
\tkzDefPoint(-2,2){P5}
\tkzDefPoint(-1,1){Q5}
\tkzDefPoint(2,-2){P6}
\tkzDefPoint(1,-1){Q6}
\tkzDrawSegments[thick](P1,P2 P3,P4 P5,P6)
\tkzDrawPoints[fill=black,color=black,size=8](P1,Q1,R2,Q2,P4,Q4)
\end{scope}

\begin{scope}[xshift=12cm]
\tkzDefPoint(2,2){P1}
\tkzDefPoint(1,1){Q1}
\tkzDefPoint(-2,-2){P2}
\tkzDefPoint(-1,1){Q2}
\tkzDefPoint(-2,2){R2}
\tkzDefPoint(0,2){P3}
\tkzDefPoint(0,1){Q3}
\tkzDefPoint(0,-2){P4}
\tkzDefPoint(0,-1){Q4}
\tkzDefPoint(-2,2){P5}
\tkzDefPoint(-1,1){Q5}
\tkzDefPoint(2,-2){P6}
\tkzDefPoint(1,-1){Q6}
\tkzDrawSegments[thick](P1,P2 P3,P4 P5,P6)
\tkzDrawPoints[fill=black,color=black,size=8](P1,Q1,R2,Q2,P3,Q3)
\end{scope}

\begin{scope}[xshift=18cm]
\tkzDefPoint(2,2){P1}
\tkzDefPoint(1,1){Q1}
\tkzDefPoint(-2,-2){P2}
\tkzDefPoint(-1,1){Q2}
\tkzDefPoint(2,-2){R2}
\tkzDefPoint(0,2){P3}
\tkzDefPoint(0,1){Q3}
\tkzDefPoint(0,-2){P4}
\tkzDefPoint(0,-1){Q4}
\tkzDefPoint(-2,2){P5}
\tkzDefPoint(-1,1){Q5}
\tkzDefPoint(2,-2){P6}
\tkzDefPoint(1,-1){Q6}
\tkzDrawSegments[thick](P1,P2 P3,P4 P5,P6)
\tkzDrawPoints[fill=black,color=black,size=8](P1,Q1,R2,P5,P3,Q3)
\end{scope}

\begin{scope}[xshift=24cm]
\tkzDefPoint(2,2){P1}
\tkzDefPoint(1,1){Q1}
\tkzDefPoint(-2,-2){P2}
\tkzDefPoint(-1,-1){Q2}
\tkzDefPoint(2,-2){R2}
\tkzDefPoint(0,2){P3}
\tkzDefPoint(0,1){Q3}
\tkzDefPoint(0,-2){P4}
\tkzDefPoint(0,-1){Q4}
\tkzDefPoint(-2,2){P5}
\tkzDefPoint(-1,1){Q5}
\tkzDefPoint(2,-2){P6}
\tkzDefPoint(1,-1){Q6}
\tkzDrawSegments[thick](P1,P2 P3,P4 P5,P6)
\tkzDrawPoints[fill=black,color=black,size=8](P2,Q2,R2,P5,P3,Q3)
\end{scope}

\begin{scope}[xshift=30cm]
\tkzDefPoint(2,2){P1}
\tkzDefPoint(1,1){Q1}
\tkzDefPoint(-2,-2){P2}
\tkzDefPoint(-1,1){Q2}
\tkzDefPoint(2,-2){R2}
\tkzDefPoint(0,2){P3}
\tkzDefPoint(0,1){Q3}
\tkzDefPoint(0,-2){P4}
\tkzDefPoint(0,-1){Q4}
\tkzDefPoint(-2,2){P5}
\tkzDefPoint(-1,1){Q5}
\tkzDefPoint(2,-2){P6}
\tkzDefPoint(1,-1){Q6}
\tkzDrawSegments[thick](P1,P2 P3,P4 P5,P6)
\tkzDrawPoints[fill=black,color=black,size=8](P1,P2,R2,P5,P3,Q3)
\end{scope}

\end{tikzpicture}
\end{center}
In the last four cases at least one of the
facets $\tau_{ij}$ would be not contained 
in the interior of the moving cone
(see Proposition~\cite[3.3.2.3]{ADHL15}).
In the first two cases there would be 
at least six Mori chambers in the moving 
cone.

If $r = n-2$ then each of the three
hyperplanes contains at least one 
more class.
The possible configurations are the
following, where in each case still 
two more generators have to be 
added to the picture.
\begin{center}
\begin{tikzpicture}[scale=.3]
\tkzDefPoint(0,0){O}
\tkzDefPoint(2,2){P1}
\tkzDefPoint(-2,-2){P2}
\tkzDefPoint(0,2){P3}
\tkzDefPoint(0,-2){P4}
\tkzDefPoint(-2,2){P5}
\tkzDefPoint(2,-2){P6}
\tkzDrawSegments[thick](P1,P2 P3,P4 P5,P6)
\tkzDrawPoints[fill=black,color=black,size=8](O,P1,P5,P3)

%\begin{scope}[xshift=8cm]
%\tkzDefPoint(0,0){O}
%\tkzDefPoint(2,2){P1}
%\tkzDefPoint(-2,-2){P2}
%\tkzDefPoint(0,2){P3}
%\tkzDefPoint(0,-2){P4}
%\tkzDefPoint(-2,2){P5}
%\tkzDefPoint(2,-2){P6}
%\tkzDrawSegments[thick](P1,P2 P3,P4 P5,P6)
%\tkzDrawPoints[fill=black,color=black,size=8](O,P1,P5,P6)
%\end{scope}

\begin{scope}[xshift=12cm]
\tkzDefPoint(0,0){O}
\tkzDefPoint(2,2){P1}
\tkzDefPoint(-2,-2){P2}
\tkzDefPoint(0,2){P3}
\tkzDefPoint(0,-2){P4}
\tkzDefPoint(-2,2){P5}
\tkzDefPoint(2,-2){P6}
\tkzDrawSegments[thick](P1,P2 P3,P4 P5,P6)
\tkzDrawPoints[fill=black,color=black,size=8](O,P1,P5,P4)
\end{scope}
\end{tikzpicture}
\end{center}
%In the second case it is not possible
%to add two more classes and have
%that each cone $\tau_{ij}$ is in the
%interior of the moving cone.
In the first case it is possible to 
fulfill these conditions only if the two
more classes are as in the picture below.
\begin{center}
\begin{tikzpicture}[scale=.3]
\tkzDefPoint(0,0){O}
\tkzDefPoint(2,2){P1}
\tkzDefPoint(-2,-2){P2}
\tkzDefPoint(0,2){P3}
\tkzDefPoint(0,-2){P4}
\tkzDefPoint(-2,2){P5}
\tkzDefPoint(2,-2){P6}
\tkzDefPoint(-1,-2){Q1}
\tkzDefPoint(1,-2){Q2}
\tkzDrawSegments[thick](P1,P2 P3,P4 P5,P6)
\tkzDrawPoints[fill=black,color=black,size=8](O,P1,P5,P3,Q1,Q2)
\end{tikzpicture}
\end{center}
In this case the moving cone contains 
at least five Mori chambers, as shown
in Example~\ref{3toric_not_smooth}.
In the last case, up to symmetry 
there is again one possibility for
the open chambers where the 
two new classes can belong.
This is displayed in the picture below.
\begin{center}
\begin{tikzpicture}[scale=.3]
\tkzDefPoint(0,0){O}
\tkzDefPoint(2,2){P1}
\tkzDefPoint(-2,-2){P2}
\tkzDefPoint(0,2){P3}
\tkzDefPoint(0,-2){P4}
\tkzDefPoint(-2,2){P5}
\tkzDefPoint(2,-2){P6}
\tkzDefPoint(-3,-0.5){Q1}
\tkzDefPoint(3,-0.5){Q2}
\tkzDrawSegments[thick](P1,P2 P3,P4 P5,P6)
\tkzDrawPoints[fill=black,color=black,size=8](O,P1,P5,P4,Q1,Q2)
\end{tikzpicture}
\end{center}
However, in this last case we can have either five or three chambers in the movable cone.

If $r = n-1$ then again each of 
the three hyperplanes contains at least 
one more class, so that the above configurations 
still apply. Here we have to add only 
one more class to the picture and 
thus one of the three cones $\tau_{ij}$
would not be contained in the interior
of the moving cone.
\end{proof}

Examples \ref{3toric_not_smooth} and \ref{4fold_toric} in the next section show that Proposition \ref{prop_5chambers} is sharp in
the sense that one can has Mori chamber decomposition distinct from the stable base locus decomposition inside the movable cone with five chambers and with three chambers in dimension higher than three.

\section{Examples}\label{examples}
In this section we give examples of varieties for which the Mori chamber and the stable base locus decomposition do not coincide, and we analyze this phenomenon from both the geometrical and the combinatorial point of view. 

\begin{Example}(Birational viewpoint)
Consider a plane $\Pi\subset\mathbb{P}^n$ and five general points $p_1,\dots,p_5\in \Pi$. Let $f:X\rightarrow\mathbb{P}^n$ be the blow-up of $\mathbb{P}^n$ at $p_1,\dots,p_5$ with exceptional divisors $E_1,\dots,E_5$. Then the strict transform $\widetilde{\Pi}\subset X$ of $\Pi$ is a del Pezzo surface of degree four. In particular $\widetilde{\Pi}$ is a Mori dream space. 

Let $e_1,\dots,e_5$ be classes of a line in the exceptional divisors, and $l$ the pull-back a a general line in $\mathbb{P}^n$. Let $\widetilde{C}\subset X$ be an irreducible curve. If $\widetilde{C}$ gets contracted by $f$ the $\widetilde{C}\sim me_i$ with $m>0$ for some $i\in\{1,\dots,5\}$. Otherwise, we may write $\widetilde{C}\sim dl-m_1e_1-\dots-m_5e_5$, that is $\widetilde{C}$ is the strict transform of a curve $C\subset\mathbb{P}^n$ of degree $d$ having multiplicity $m_i$ at $p_i$ for $i=1,\dots,5$. 

If $d<m_1 + \dots + m_5$ then $C\subset \Pi$ and $\widetilde{C}\subset\widetilde{\Pi}$. In this case we may write $\widetilde{C}$ as a linear combination with non-negative coefficients of $e_1,\dots,e_5,l-e_i-e_j,2l-e_1-\dots -e_5$ since these are the generators of $\NE(\widetilde{\Pi})$. If $d\geq m_1+\dots +m_5$ then we may write
$$\widetilde{C}\sim m_1(l-e_1)+\dots m_5(l-e_5)+(d-m_1-\dots -m_5)l$$
where $l-e_i = (l-e_i-e_j)+e_j$. Therefore, 
$$\NE(X) = \left\langle e_i, l-e_i-e_j,2l-e_1-\dots -e_5\right\rangle$$
Now, let $D\subset\mathbb{P}^n$ be the divisor given by the union of $n-2$ general hyperplanes containing $\Pi$. For the strict transform $\widetilde{D}\subset X$ of $D$ we have $\widetilde{D}\sim (n-2)(H-E_1-\dots -E_5)$, and
$$-K_X-\epsilon\widetilde{D}\sim (n+1-\epsilon (n-2))H-((n-1)-\epsilon (n-2))(E_1+\dots E_5)$$
where $H$ is the pull-back of the hyperplane section of $\mathbb{P}^n$ via the blow-up morphism. Now, note that $(-K_X-\epsilon\widetilde{D})\cdot (l-e_i-e_j) = \epsilon (n-2)-n+3$, $(-K_X-\epsilon\widetilde{D})\cdot (2l-e_i-\dots -e_5) = 3\epsilon (n-2)-3n+7$, and $(-K_X-\epsilon\widetilde{D})\cdot e_i = n-1-\epsilon (n-2)$. Therefore, if $n\geq 3$ we have that $-K_X-\epsilon\widetilde{D}$ is ample for any $\frac{3n-7}{3n-6} < \epsilon \frac{n-1}{n-2}$. 

Furthermore, since $\widetilde{D}$ is the union of $n-2$ smooth irreducible divisors intersecting transversally along the surface $\widetilde{\Pi}$ the pair $(X,\epsilon\widetilde{D})$ is klt for any $0<\epsilon < 1$. Then, for any $\frac{3n-7}{3n-6} < \epsilon < 1$ the divisor $\epsilon\widetilde{D}$ induces a log Fano structure on $X$, and \cite[Corollary 1.3.2]{BCHM10} yields that $X$ is a Mori dream space.

Now, consider the divisor $D_i \sim 2H-E_1-\dots -\widehat{E}_i-\dots-E_4$ where the hat means that $E_i$ does no appear in the expression of $D_i$. Note that $D_i$ is nef, and since $D_i^n >0$ it is also big. Therefore, $D_i$ is semi-ample and big. Now, consider a curve $\widetilde{C}\subset X$
$$\widetilde{C}\sim \alpha_{1,2}(l-e_1-e_2)+\dots +\alpha_{4,5}(l-e_4-e_5)+\beta \widetilde{C}_{1,\dots,5}+\gamma_1e_1+\dots +\gamma_5e_5$$
where $\widetilde{C}_{1,\dots,5}\sim 2l-e_1-\dots -e_5$. Assume that $D_i\cdot \widetilde{C} =\gamma_1+\dots +\gamma_5 = 0$. Hence we may write
$$\widetilde{C}\sim (2\beta+\alpha_{1,2}+\dots+\alpha_{4,5})l-(\beta+\alpha_{1,2}+\dots+\alpha_{1,5})e_1 -\dots -(\beta+\alpha_{4,5})e_5$$
Since  $2\beta+\alpha_{1,2}+\dots+\alpha_{4,5} < \beta+\alpha_{1,2}+\dots+\alpha_{1,5}+\dots +\beta+\alpha_{4,5}$ we conclude that $\widetilde{C}\subset\widetilde{\Pi}$. 

Then, a large enough multiple of $D_i$ induces a birational morphism $f_{D_i}:X\rightarrow X_i$ contracting $\widetilde{\Pi}$ onto a $\mathbb{P}^1$ contained in $X_i$, and whose exceptional locus is exactly $\widetilde{\Pi}$, that is $\Exc(f_i) = \widetilde{\Pi}$. Indeed, $D_{i|\widetilde{\Pi}}$ yields the fibration $\widetilde{\Pi}\rightarrow \mathbb{P}^1$ induced by the linear system of conics in $\Pi$ through $p_1,\dots,\widehat{p}_i,\dots,p_5$.

Let $f_{D_i}:X\rightarrow X_i$ and $f_{D_j}:X\rightarrow X_j$ be the morphisms induced respectively by $D_i$ and $D_j$. From now on we will assume that $n\geq 4$ so that $f_{D_i}$ is a small contraction. Then $\Exc(f_{D_i}) = \Exc(f_{D_j}) = \widetilde{\Pi}$. On the other hand, $D_i$ and $D_j$ give rise to two different flops
  \[
  \begin{tikzpicture}[xscale=1.9,yscale=-1.2]
    \node (A0_0) at (0, 0) {$X^{-}$};
    \node (A0_2) at (2, 0) {$X$};
    \node (A0_4) at (4, 0) {$X^{+}$};
    \node (A1_1) at (1, 1) {$X_i$};
    \node (A1_3) at (3, 1) {$X_j$};
    \path (A0_2) edge [->,dashed,swap]node [auto] {$\scriptstyle{\chi^{-}}$} (A0_0);
    \path (A0_2) edge [->]node [auto] {$\scriptstyle{f_{D_i}}$} (A1_1);
    \path (A0_0) edge [->]node [auto] {$\scriptstyle{}$} (A1_1);
    \path (A0_4) edge [->]node [auto] {$\scriptstyle{}$} (A1_3);
    \path (A0_2) edge [->,dashed]node [auto] {$\scriptstyle{\chi^{+}}$} (A0_4);
    \path (A0_2) edge [->,swap]node [auto] {$\scriptstyle{f_{D_j}}$} (A1_3);
  \end{tikzpicture}
  \]
Therefore, $X^{+}$ and $X^{-}$ correspond to two different chambers $\mathcal{C}^{+}$ and $\mathcal{C}^{-}$ of the Mori chamber decomposition of $\Mov(X)$. On the other hand, by Nakamaye's theorem \cite[Theorem 10.3.5]{La04II} the base locus of $\chi^{+}$ is $\Exc(f_{D_j})$ and the base locus of $\chi^{-}$ is $\Exc(f_{D_i})$. These base loci are respectively the stable base loci of $D_i$ and $D_j$. 

Finally, since $\Exc(f_{D_i}) = \Exc(f_{D_j}) = \widetilde{\Pi}$ we conclude that $\mathcal{C}^{-}\cup \mathcal{C}^{+}$ is a unique chamber of the stable base locus decomposition of $\Mov(X)$.

More generally, for $i = 1,\dots,5$ we get five different chambers, all of them adjacent to $\Nef(X)$, of the Mori chamber decomposition of $\Mov(X)$ whose union gives a single chamber of its stable base locus decomposition. Furthermore, the stable base locus of a divisor in this chamber is exactly the surface $\widetilde{\Pi}$. 
\end{Example}

\begin{Example}(Cox rings viewpoint)\label{ex_nc}
Let $X$ be the toric variety with Cox ring
$\mathcal{R}(X) := K[T_1,\dots,T_5]$ whose 
grading matrix and irrelevant ideal are
the following
\[
  Q =
  \begin{bmatrix}
   1&0&-2&2&-1\\
   0&1&-1&1&-1
  \end{bmatrix}
  \qquad
  \qquad
  \mathcal J_{\rm irr}(X) 
  = 
  \langle T_1,T_4\rangle
  \cap
  \langle T_1,T_5\rangle
  \cap
  \langle T_3,T_5\rangle
\]
The degrees of the generators are displayed
in the following picture together with the
semi-ample cone $\lambda_A$ which is the 
gray region
\begin{center}
 \begin{tikzpicture}
   \path [fill=lightgray] (0,0) -- (1,0) -- (-1,-1);
  \foreach \x/\y in {1/0/1,0/1,-2/-1,2/1,-1/-1}
   {\draw[fill] (\x,\y) circle [radius=0.05];
    \draw[-] (0,0) -- (\x,\y);}
   \node[right] at (1,0) {$w_1$};
   \node[above] at (0,1) {$w_2$};
   \node[below left] at (-2,-1) {$w_3$};
   \node[above right] at (2,1) {$w_4$};
   \node[below left] at (-1,-1) {$w_5$};
%   \draw[-,dashed] (0,0) -- (0.2,-.3);
%   \draw[fill] (0.2,-.3) circle [radius=0.05];
   \node[below right] at (-0.1,0) {$\lambda_A$};
 \end{tikzpicture}
\end{center}
The matrix $Q$ defines the following exact 
sequence where the right hand side
$\mathbb Z^2$ is identified with the 
divisor class group of $X$
$$
0\rightarrow \mathbb Z^3\rightarrow \mathbb Z^5\xrightarrow{Q} \mathbb Z^2\rightarrow 0
$$
Denote by $\gamma$ the positive quadrant of
$\mathbb Q^5$.
Since the Cox ring is a polynomial ring 
the $\mathfrak F$-faces are all the
faces of $\gamma$ and the orbit cones
are all their projections in $\mathbb Q^2$.
It follows that the maximal GIT chambers of 
$X$ are all the $2$-dimensional cones generated 
by pairs of consecutive rays.
Recall that $\lambda_A$ is the GIT chamber
corresponding to the semi-ample cone, 
generated by $w_1,w_5$. The corresponding 
bunch of orbit cones is 
\[
 \Phi(\lambda_A)
 :=
 \{\text{orbit cones $\omega$ such that 
 $\omega^\circ\supseteq\lambda_A^\circ$}\}
\]
In particular the only orbit cone of $\Phi$
which contains $w_2$ is the whole of $\mathbb Q^2$. Denote by $\lambda_{i,j}$ the cone determined by $\omega_i$ and $\omega_j$, and observe that ${\rm cone}(\lambda_A\cup\lambda_{2,3})
= {\rm cone}(\lambda_A\cup\lambda_{2,4}) = 
\mathbb Q^2$. Thus, by Proposition~\ref{re:sbl}, we conclude that $\lambda^{\rm sbl}(w_2) = \lambda_{2,3}\cup\lambda_{2,4}$.
\end{Example}

\begin{Example}(Fans viewpoint)\label{ex2}
Let $v_1\dots,v_5\in\mathbb Z^3$ be a set
of vectors which is Gale dual to the set
$w_1,\dots,w_5\in \mathbb Z^2$ in Example \ref{ex_nc}. We can assume $v_1\dots,v_5$
to be the five columns of the following matrix
\[
 \begin{bmatrix}
  1&0&0&-1&-1\\
  0&1&0&1&2\\
  0&0&1&1&0
 \end{bmatrix}
\]
These vectors generate the one dimensional
cones of a fan $\Sigma$ whose cones are 
displayed in the following picture
\begin{center}
 \begin{tikzpicture}
  \draw[-] (0,0) -- (2,0) -- (1,1) -- (0,1) -- (0,0);
  \draw[-] (0,1) -- (1,0) -- (1,1);
  \foreach \x/\y in {0/0,1/0,0/1,1/1,2/0}
   {\draw[fill] (\x,\y) circle [radius=0.05];}
   \node[below left] at (0,0) {$v_1$};
   \node[below] at (1,0) {$v_2$};
   \node[above left] at (0,1) {$v_3$};
   \node[above] at (1,1) {$v_4$};
   \node[below right] at (2,0) {$v_5$};
 \end{tikzpicture}
\end{center}
The toric variety $X = X(\Sigma)$ is the 
same as the one previously defined in Example \ref{ex_nc}.
The displayed fan structure correspond
to choosing the semi-ample chamber 
to be the GIT chamber $\lambda_A$ of
the previous picture.
The following are the fan structures of
the toric varieties whose semi-ample cone
is respectively ${\rm cone}(w_1,w_4)$
and ${\rm cone}(w_3,w_5).$
\begin{center}
 \begin{tikzpicture}
  \draw[-] (0,0) -- (2,0) -- (1,1) -- (0,1) -- (0,0);
  \draw[-] (0,1) -- (1,0);
  \draw[-] (0,1) -- (2,0);
  \foreach \x/\y in {0/0,1/0,0/1,1/1,2/0}
   {\draw[fill] (\x,\y) circle [radius=0.05];}
   \node[below left] at (0,0) {$v_1$};
   \node[below] at (1,0) {$v_2$};
   \node[above left] at (0,1) {$v_3$};
   \node[above] at (1,1) {$v_4$};
   \node[below right] at (2,0) {$v_5$};
 \begin{scope}[xshift=5cm]
  \draw[-] (0,0) -- (2,0) -- (1,1) -- (0,1) -- (0,0);
  \draw[-] (1,0) -- (1,1);
  \draw[-] (0,0) -- (1,1);
  \foreach \x/\y in {0/0,1/0,0/1,1/1,2/0}
   {\draw[fill] (\x,\y) circle [radius=0.05];}
   \node[below left] at (0,0) {$v_1$};
   \node[below] at (1,0) {$v_2$};
   \node[above left] at (0,1) {$v_3$};
   \node[above] at (1,1) {$v_4$};
   \node[below right] at (2,0) {$v_5$};
 \end{scope}
 \end{tikzpicture}
\end{center}
\end{Example}

\begin{Example}(Compactification)
Let $Y$ be the toric variety with Cox ring
$\mathcal{R} := K[T_1,\dots,T_6]$ whose 
grading matrix and irrelevant ideal are
the following
\begin{align*}
  Q =
  \begin{bmatrix}
    0&2&2&0&1&1\\
    0&3&1&1&0&1\\
    1&0&2&0&2&1
  \end{bmatrix}
  \qquad
  \qquad
  \mathcal J_{\rm irr}(Y) 
  = & 
  \langle T_1,T_4\rangle
  \cap
  \langle T_1,T_5\rangle
  \cap
  \langle T_3,T_5\rangle\\
  & \cap
  \langle T_2,T_3,T_6\rangle
  \cap
  \langle T_2,T_4,T_6\rangle
\end{align*}
The toric variety $Y$ is a completion of the variety
$X$ in Example \ref{ex_nc} obtained by adding the vector 
$(1,-4,-2)$ to the primitive generators of the
one dimensional cones of $X$. The maximal 
cones of $Y$ have the following indexes:
$[ 1, 2, 3 ]$, $[ 2, 3, 4 ]$, $[ 2, 4, 5 ]$, 
$[ 1, 2, 6 ]$, $[ 2, 5, 6 ]$, $[ 1, 3, 6 ]$, 
$[ 3, 4, 6 ]$, $[ 4, 5, 6 ]$.
The variety $Y$ is $\mathbb Q$-factorial,
non-Gorenstein, with $2K_Y$ Cartier.
The effective cone of $Y$ has $16$ maximal 
GIT chambers, and the moving cone of $Y$ has 
$3$ maximal chambers. The columns of each 
of the following matrices generate a maximal 
GIT chamber of the moving cone, where the first one
corresponds to the semi-ample cone
$$
 \mathcal A =
  \begin{bmatrix}
   1&1&2&2\\
   1&1&1&1\\
   1&2&3&4
  \end{bmatrix}
  \qquad
 \mathcal M_1 =
  \begin{bmatrix}
   1&1&2&2\\
   1&1&3&3\\
   1&2&2&4
  \end{bmatrix}
  \qquad
 \mathcal M_2 =
  \begin{bmatrix}
   1&2&4&6\\
   1&1&3&3\\
   1&3&4&8
  \end{bmatrix}
$$
The columns of each of the following matrices
generate a maximal GIT chamber of the effective
cone of $X$.
$$
 \mathcal C_1 =
  \begin{bmatrix}
    1&2&4\\
    1&3&3\\
    1&0&4
  \end{bmatrix}
  \qquad
 \mathcal C_2 =
  \begin{bmatrix}
    1&2&2\\
    1&3&3\\
    1&0&2
  \end{bmatrix}
$$
The stable base locus of a divisor which lies in
the interior of either $\mathcal C_1$ or 
$\mathcal C_2$ is $V(T_2)$. Thus the union of these 
two chambers is a unique stable base locus chamber.
On the other hand, if we denote by $X_i$ the
projective toric variety whose semi-ample cone 
is given by $\mathcal M_i$, then from the point of view of $X_i$ the GIT chambers
coincide with the stable base locus chambers.
\end{Example}

\begin{Example}
Consider the toric variety $Z=Z(\Lambda)$ with fan $\Lambda\subset \mathbb{Q}^3$ given in the following picture
\begin{center}
 \begin{tikzpicture}
  \draw[-] (0,0) -- (1,0) -- (1,1) -- (1,2) -- (0,2) -- (0,1) -- (0,0);
  \draw[-] (1,1) -- (0,0) -- (1,2) -- (0,1);
  \foreach \x/\y in {0/0,1/0,1/1,1/2,0/1,0/2}
   {\draw[fill] (\x,\y) circle [radius=0.05];}
   \node[left] at (0,0) {$v_1$};
   \node[left] at (0,1) {$v_2$};
   \node[left] at (0,2) {$v_3$};
   \node[right] at (1,2) {$v_4$};
   \node[right] at (1,1) {$v_5$};
   \node[right] at (1,0) {$v_6$};
 \end{tikzpicture}
\end{center}
One can take for instance $v_1,\dots, v_6$ to be the columns of the following matrix
\begin{center}
$\left[\begin{array}{rrrrrrr}
-1 & -1 & -1 & 1 & 1 & 1\\ 
-1 & 0 & 1 & -1 & 0 & 1\\ 
1 & 1 & 1 & 1 & 1 & 1 
\end{array}\right]$
\end{center}
Then $Z$ is a non-complete Mori dream space $3$-fold with Picard number three and with Cox ring generated by six free variables with degrees given by the columns $w_1,\dots,w_6$ of the following matrix
\begin{center}
$\left[\begin{array}{rrrrrrr}
1 & -2 & 1 & 0 & 0 & 0 \\ 
1 & -1 & 0 & -1 & 1 & 0 \\ 
2 & -2 & 0 & -1 & 0 & 1
\end{array}\right]$
\end{center}
The effective cone of $Z$ is the whole of $\mathbb{Q}^3$, it has $14$ maximal GIT chambers and six of them are in the movable cone. Let us denote by $Z_j:=Z(\lambda_j),j=1,\dots, 6$ the different models where $Z=Z_1,$ and $\lambda_j,j=1,\dots,6$ are the corresponding GIT chambers. The related fans are the following
\begin{center}
 \begin{minipage}[b]{0.25\textwidth}
 \begin{tikzpicture}
  \draw[-] (0,0) -- (1,0) -- (1,1) -- (1,2) -- (0,2) -- (0,1) -- (0,0);
  \draw[-] (1,1) -- (0,0) -- (1,2) -- (0,1);
    \foreach \x/\y in {0/0,1/0,1/1,1/2,0/1,0/2}
   {\draw[fill] (\x,\y) circle [radius=0.05];}
    \node[below] at (0.5,0) {$\Lambda_1$}; 
 \end{tikzpicture} 
   \hspace{0.3cm}
  \begin{tikzpicture}
  \draw[-] (0,0) -- (1,0) -- (1,1) -- (1,2) -- (0,2) -- (0,1) -- (0,0);
  \draw[-] (0,0) -- (1,1) -- (0,1) -- (1,2);
    \foreach \x/\y in {0/0,1/0,1/1,1/2,0/1,0/2}
   {\draw[fill] (\x,\y) circle [radius=0.05];}
    \node[below] at (0.5,0) {$\Lambda_2$};  
 \end{tikzpicture}\vspace{1.3cm}\end{minipage}
 \begin{minipage}[b]{0.12\textwidth}
  \begin{tikzpicture}
  \draw[-] (0,0) -- (1,0) -- (1,1) -- (1,2) -- (0,2) -- (0,1) -- (0,0);
  \draw[-] (0,0) -- (1,1) -- (0,2) -- (0,1) -- (1,1);
    \foreach \x/\y in {0/0,1/0,1/1,1/2,0/1,0/2}
   {\draw[fill] (\x,\y) circle [radius=0.05];}
    \node[below] at (0.5,0) {$\Lambda_4$};  
 \end{tikzpicture}\vspace{0.2cm}
 \begin{tikzpicture}
  \draw[-] (0,0) -- (1,0) -- (1,1) -- (1,2) -- (0,2) -- (0,1) -- (0,0);
 \draw[-] (1,0) -- (0,1) -- (1,2) -- (1,1) -- (0,1);
    \foreach \x/\y in {0/0,1/0,1/1,1/2,0/1,0/2}
   {\draw[fill] (\x,\y) circle [radius=0.05];}
    \node[below] at (0.5,0) {$\Lambda_3$};  
\end{tikzpicture} \end{minipage}
 \begin{minipage}[b]{0.25\textwidth}
  \begin{tikzpicture}
  \draw[-] (0,0) -- (1,0) -- (1,1) -- (1,2) -- (0,2) -- (0,1) -- (0,0);
\draw[-] (1,0) -- (0,1) -- (1,1) -- (0,2);
    \foreach \x/\y in {0/0,1/0,1/1,1/2,0/1,0/2}
   {\draw[fill] (\x,\y) circle [radius=0.05];}
    \node[below] at (0.5,0) {$\Lambda_5$};  
 \end{tikzpicture} \hspace{0.3cm}\begin{tikzpicture}
  \draw[-] (0,0) -- (1,0) -- (1,1) -- (1,2) -- (0,2) -- (0,1) -- (0,0);
\draw[-] (0,1) -- (1,0) -- (0,2) -- (1,1);
    \foreach \x/\y in {0/0,1/0,1/1,1/2,0/1,0/2}
   {\draw[fill] (\x,\y) circle [radius=0.05];} 
    \node[below] at (0.5,0) {$\Lambda_6$}; 
 \end{tikzpicture}\vspace{1.3cm}\end{minipage}
\end{center}
and the possible flips are as follows
$$
  \begin{tikzpicture}[xscale=1.2,yscale=-1.2]
    \node (A0_1) at (0, 0) {$Z_1$};
    \node (A0_2) at (2, 0) {$Z_2$};
    
    \node (A0_3) at (4, 1) {$Z_3$};
    
    \node (A0_4) at (4, -1) {$Z_4$};
    
    \node (A0_5) at (6, 0) {$Z_5$};
    
    \node (A0_6) at (8, 0) {$Z_6$};
    \path (A0_1) edge [->,dashed,swap]node [auto] {$\scriptstyle{\eta_1}$} (A0_2);
        \path (A0_2) edge [->,dashed,swap]node [auto] {$\scriptstyle{\eta_2}$} (A0_3);
        
        \path (A0_2) edge [->,dashed]node [auto] {$\scriptstyle{\eta_3}$} (A0_4);
            \path (A0_3) edge [->,dashed,swap]node [auto] {$\scriptstyle{\eta_4}$} (A0_5);
                \path (A0_4) edge [->,dashed]node [auto] {$\scriptstyle{\eta_5}$} (A0_5);
                    \path (A0_5) edge [->,dashed,swap]node [auto] {$\scriptstyle{\eta_6}$} (A0_6);
  \end{tikzpicture}
$$
In $Z$ we have that $\lambda_2,\lambda_3,\lambda_4$ are in distinct stable base locus chambers but $\lambda_5$ and $\lambda_6$ are inside the same stable base locus chamber. More precisely, if $w_j\in \lambda_j^\circ,j=1,\dots, 6$ then the stable base loci are
$$\bold{B}(w_1)=\emptyset,
\bold{B}(w_2)=l_{1,4},
\bold{B}(w_3)=l_{1,4}\cup l_{1,5},
\bold{B}(w_4)=l_{1,4}\cup l_{2,4},
\bold{B}(w_5)=\bold{B}(w_6)=l_{1,4}\cup l_{1,5}\cup l_{2,4}
$$
where $l_{i,j}$ is the curve in the intersection of the toric divisors $D_i,D_j$ corresponding to the vectors $v_i,v_j$ in the fan $\Lambda$.

Geometrically we see that the flip $\eta_6$ has $l_{2,5}$ as flipping curve, and $l_{2,5}$ does not exist in $Z_1$. Therefore, the flipping curve in the last flip is not visible from the point of view of $Z_1$.

Note that we can produce an example of a complete $3$-fold $W$ with Picard number four such that $\MCD(W)\neq\SBLD(W)$ taking
$$v_8=\frac{-v_1-\dots -v_7}{6}=(0,0,-1)$$
and including the relevant additional cones.
\end{Example}

The following will be the leading example in the Section \ref{mainSec}. Indeed, we will produce a complete Mori dream space $X$ of Picard rank two such that $\MCD(X)\neq \SBLD(X)$.

\begin{Example}(Fundamental example)\label{ex1}
Let $Z$ be the toric variety with Cox ring
$K[T_1,\dots,T_{11}]$ whose 
grading matrix and irrelevant ideal are
the following
\[
 Q = \begin{bmatrix}
  1&1&2&2&2&2&1&0&0&0&0\\
  0&0&1&1&1&1&2&1&1&1&1
 \end{bmatrix}
 \qquad
  \mathcal J_{\rm irr}(Z) 
  = 
  \langle T_1,T_2\rangle
  \cap
  \langle T_3,\dots,T_{11}\rangle
\]
Denote by $w_i\in\mathbb Z^2$ the 
degree of $T_i$. The following
picture displays the degrees of the
generators of the Cox ring together 
with the three maximal GIT chambers 
of the moving cone, where the shaded one is 
the ample cone
\begin{center}
 \begin{tikzpicture}
   \path [fill=lightgray] (0,0) -- (1,0) -- (2,1);
  \foreach \x/\y in {1/0/1,0/1,2/1,1/2}
   {\draw[fill] (\x,\y) circle [radius=0.05];
    \draw[-] (0,0) -- (\x,\y);}
   \node[below right] at (1,0) {$w_1,w_2$};
   \node[above left] at (0,1) {$w_8,\dots,w_{11}$};
   \node[above right] at (2,1) {$w_3,\dots,w_6$};
   \node[above right] at (1,2) {$w_7$};
   \node at (0.3,1) {$\lambda'$};
   \node at (0.7,0.7) {$\lambda$};
   \node at (1,0.25) {$\lambda_A$};
 \end{tikzpicture}
\end{center}
Let $\overline Z = K^{11}$ be the
spectrum of the Cox ring of $Z$ and let
$\overline X$ be the affine subvariety 
defined by
$$\overline{X} = \{F = G = 0\}\subset\overline{Z}$$
where $F,G$ are general polynomial of degree $(2,2)$ in the $T_i$. that is general linear combinations of the following monomials
\stepcounter{thm}
\begin{equation}\label{mon}
\left\lbrace\begin{array}{llllllll}
T_1^2T_8^2 & T_1^2T_9^2 & T_1^2T_{10}^2 & T_1^2T_{11}^2 & T_2^2T_8^2 & T_2^2T_9^2 & T_2^2T_{10}^2 & T_2^2T_{11}^2;\\ 
T_3T_8 & T_3T_9 & T_3T_{10} & T_3T_{11} & T_4T_8 & T_4T_9 & T_4T_{10} & T_4T_{11};\\ 
T_5T_8 & T_5T_9 & T_5T_{10} & T_5T_{11} & T_6T_8 & T_6T_9 & T_6T_{10} & T_6T_{11};\\ 
T_1T_7 & T_2T_7. & & & & & &  
\end{array}\right. 
\end{equation} 
It is immediate to check that for $F,G$ general enough $\overline{X}$ is irreducible and $\codim_{\overline{X}}(\Sing(\overline{X})) \geq 2$. Since a local complete intersection is Cohen-Macaulay by Serre's criterion on normality we get that $\overline{X}$ is a normal variety. Let $p_Z\colon\widehat Z\to Z$ 
be the characteristic space morphism
of $Z$ and let $\widehat X := \overline X \cap
\widehat Z$. The image of $\widehat X$
via $p_Z$ is a subvariety $X$ of $Z$. Since $\overline{X}$ is irreducible and normal, and $X$ is a GIT quotient of $\overline{X}$ by a reductive group \cite[Theorem 1.24 (vi)]{Br10} yields that $X$ is irreducible and normal as well. We claim that
\[
 \widehat X
 =
 \text{ Zariski closure of $p_Z^{-1}(X\cap Z')$
 in $\widehat Z$}
\]
where $Z'$ is the smooth locus of $Z$.
Indeed $Z'$ contains the open subset $Z''$
of $Z$ obtained by removing the union of all 
the toric subvarieties of the form $p_Z(V(T_i,T_j))$,
for any pair of indexes $i,j$. Since the Zariski
closure of $p_Z^{-1}(X\cap Z'')$ in $\widehat Z$
equals $\widehat X$ the claim follows. 

Observe that $\codim_{\overline{X}}(\overline X\setminus\widehat X)\geq 2$. Thus if one can show that the pull-back
$\imath^*\colon {\rm Cl}(Z)\to{\rm Cl}(X)$,
induced by the inclusion, is an isomorphism
then by \cite[Corollary 4.1.1.5]{ADHL15}, it
follows that the Cox ring of $X$ is
$$
\mathcal{R}(X) = \frac{K[T_1,\dots,T_{11}]}{I(\overline X)}
$$
Note that each of $w_{1..6}$, $w_{3..7}$
and $w_{7..11}$ is an orbit cone of $\overline{X}$.
In particular, the three maximal chambers $\lambda'$,
$\lambda$, $\lambda_A$ of the 
moving cone are GIT chambers. On the other
hand, since $w_{1,2,7}$ is not an orbit cone,
it follows that each orbit cone contains 
\[
  \{\omega\, :\, \lambda_A\subseteq\omega
  \text{ and }\lambda'\subsetneqq\omega\}
 =
  \{\omega\, :\, \lambda_A\subseteq\omega
  \text{ and }\lambda\subsetneqq\omega\}
\]
Then $\lambda$ and $\lambda'$
are contained in the same SBL chamber.
It remains to show that
\[
 \imath^*\colon {\rm Cl}(Z)\to{\rm Cl}(X)
\]
is an isomorphism. Note that the $K^{*}\times K^{*}$ action on $\overline{X}\setminus (V(T_2)\cup V(T_8))$ is trivial, so if we remove the images of $V(T_2)\cup V(T_8)$ 
from $X$, the resulting variety is isomorphic to
an affine space. Therefore, ${\rm Cl}(X)$ is
generated by the classes of the images 
of the two irreducible divisors $V(T_i)\cap \overline X$, with $i\in\{2,8\}$, and $\rho(X)\leq 2$. 

Note that crossing the wall corresponding to $w_1,w_2$ we get a morphism $f:Z\rightarrow\mathbb{P}^1$. Furthermore, $X\subset Z$ is not contained in any fiber of $f$, and hence $f$ restricts to a surjective morphism $f_{|X}:X\rightarrow\mathbb{P}^1$. This forces $\rho(X)\geq 2$. Finally, we conclude that the images of $V(T_2),V(T_8)$ form a basis of $\Cl(X)$ and $\rho(X)=2$.
\end{Example}

We finish the present section with two examples of toric complete varieties in which the Mori chamber and stable base locus decomposition do not coincide even inside the movable cone.
 
\begin{Example}\label{3toric_not_smooth}
Let $X$ be a toric variety with the following grading matrix
\[
 Q := 
 \begin{bmatrix}
    2&0&1&3&1&3\\
    3&2&2&1&1&2\\
    3&1&1&3&3&0
 \end{bmatrix} 
\]
This is a toric $3$-fold whose Mori chamber and stable base locus decomposition do not coincide even inside the movable cone. More precisely, its effective cone has $17$ GIT chambers, of these $5$ are inside the movable cone. The picture below shows a section of the Mori chamber decomposition of $\Eff(X).$ The $5$ inner chambers, two triangles and three quadrilaterals, are the GIT chambers of the movable cone. There are five possible models, the picture shows in black a chosen semi-ample cone, and in the corresponding model the two gray GIT chambers $\mathcal{C}$ and $\mathcal{C}'$ share the stable base locus.
\begin{center}
\begin{tikzpicture}[scale=.5]
\tkzDefPoint(-3,0){P1}
\tkzDefPoint(3,0){P2}
\tkzDefPoint(6,6){P3}
\tkzDefPoint(-6,6){P4}
\tkzDefPoint(0,5){P5}
\tkzDefPoint(0,3.5){P6}
\tkzInterLL(P1,P5)(P2,P4) \tkzGetPoint{Q1}
\tkzInterLL(P2,P5)(P1,P3) \tkzGetPoint{Q2}
\tkzInterLL(P2,P4)(P1,P3) \tkzGetPoint{Q3}
\tkzInterLL(P1,P6)(P2,P4) \tkzGetPoint{Q4}
\tkzInterLL(P2,P6)(P1,P3) \tkzGetPoint{Q5}
\tkzInterLL(P4,P6)(P1,P5) \tkzGetPoint{Q6}
\tkzInterLL(P3,P6)(P2,P5) \tkzGetPoint{Q7}
\tkzFillPolygon[color = black](P6,Q4,Q3,Q5)
\tkzFillPolygon[color = gray!50](P5,P6,Q6)
\tkzFillPolygon[color = gray!50](P5,P6,Q7)
\tkzDrawSegments[thick](P1,P2 P2,P3 P3,P4 P4,P1)
\tkzDrawSegments[densely dotted](P5,P1 P5,P2 P5,P3 P5,P4 P6,P1 P6,P2 P6,P3 P6,P4 P5,P6 P4,P2 P3,P1)
\tkzDrawPoints[fill=black,color=black,size=8](P1,P2,P3,P4,P5,P6)
\end{tikzpicture}
\end{center}
\end{Example}
If one chooses any of the other four models the situation is similar, the two GIT chambers opposite to the chosen semi-ample  cone inside the movable cone will have the same stable base locus.
 
\begin{Example}\label{4fold_toric}
Let $X$ be a toric variety with the following grading matrix
\[
 Q := 
 \begin{bmatrix}
    1&1&0&0&0&0&1\\
    0&0&1&1&0&0&1\\
    0&0&0&0&1&1&1
 \end{bmatrix} 
\]
This is a toric $4$-fold whose Mori chamber and stable base locus decomposition do not coincide inside the movable cone. More precisely, its effective cone has $3$ GIT chambers, all of them inside the movable cone since $\Eff(X)=\Mov(X).$ The picture below shows a section of the Mori chamber decomposition of $\Eff(X).$ There are three possible models, the picture shows in black a chosen semi-ample cone, and in the corresponding model the two gray GIT chambers $\mathcal{C}$ and $\mathcal{C}'$ have the stable base locus. If one chooses any of the other two models the situation is the same.
\begin{center}
\begin{tikzpicture}[scale=.8]
\tkzDefPoint(0,0){R1}
\tkzDefPoint(4,0){R2}
\tkzDefPoint(2,3){R3}
\tkzDefPoint(2,1.5){R4}
\tkzDrawPoints[fill=black,color=black,size=8](R1,R2,R3,R4)
\tkzFillPolygon[color = black](R1,R2,R4)
\tkzFillPolygon[color = gray!50](R1,R3,R4)
\tkzFillPolygon[color = gray!50](R2,R3,R4)
\tkzDrawSegments[thick](R1,R2 R2,R3 R3,R1)
\tkzDrawSegments[densely dotted](R1,R4 R2,R4 R3,R4)
\end{tikzpicture}
\end{center}
\end{Example}

\begin{Remark}
All the computations of this section have been implemented in 
Magma~\cite{Magma} and Maple \cite{MDSpackage} programs. For convenience
of the reader we include, as ancillary files in the arXiv version of the paper, the following files:
\begin{enumerate}
\item[-] {\tt Readme.txt}, a text on how to use the remaining files;
\item[-] {\tt SBLib.m}, the Magma library containing all the functions needed to verify our examples;
\item[-] {\tt Examples.txt}, the examples.
\end{enumerate}
For an optimized version, implemented in Maple and Singular, of some of the algorithms presented in this library see \cite{K12}. The library {\tt SBLib.m} contains nine commands
which we briefly describe here.
\begin{itemize}
\item[-] {\tt Ffaces}: computes the $\mathfrak F$-faces of an ideal $I$ of a polynomial ring according
to \cite[Remark 3.1.1.11]{ADHL15}.
\item[-] {\tt Eff}: computes the effective cone of a family of vectors in a rational vector space 
according to \cite[Definition 2.2.2.5]{ADHL15}. It takes as input the grading matrix whose columns are the relevant vectors.
\item[-] {\tt Mov}: computes the moving cone of a family of vectors in a rational vector space according to \cite[Definition 2.2.2.5]{ADHL15}. It takes as input the grading matrix whose columns are the named vectors.
\item[-] {\tt OrbitCones}: computes the orbit cones as projections of the $\mathfrak F$-faces according to \cite[Proposition 3.1.1.10]{ADHL15}. It takes as input a pair consisting of the set of $\mathfrak F$-faces together with the grading matrix.
\item[-] {\tt GitChamber}: computes the GIT chamber defined by a class $w$ according to \cite[Definition 3.1.2.6]{ADHL15}. It takes as input a pair consisting of the set of orbit cones together with a class $w$.
\item[-] {\tt GitFan}: computes the GIT quasi-fan, that is the collection of all the git cones, of the set of orbit cones according to \cite[Algorithm 8]{K12}. It takes as input a pair consisting of the set of 
orbit cones together with a class $w$.
\item[-] {\tt BunchCones}: computes a bunch of orbit cones defined by a GIT chamber $\lambda$ according to \cite[Example 3.1.3.6]{ADHL15}. It takes as input a pair consisting of the set of orbit cones together with a class $w$ in the relative interior of $\lambda$.
\item[-] {\tt SameSbl}: decides whether two classes $w_1,w_2$ have the same stable base locus according
to Proposition \ref{sbl}. It takes as input a triple consisting of a bunch of orbit cones together with the two classes $w_1$ and $w_2$.
\item[-] {\tt FindTriples}: determines all the triples $(\lambda_A,\lambda_1,\lambda_2)$ of GIT chambers such that $\lambda_1$ and $\lambda_2$ are contained in the same stable base locus chamber of the variety whose semi-ample chamber is $\lambda_A$. It works according to Proposition \ref{sbl}, and takes as input a triple consisting of the grading matrix, the set of orbit cones and the GIT fan.
\end{itemize}
\end{Remark}

\section{Smooth toric 3-folds of Picard rank three}\label{sec_toric3fold}
In this section we prove Theorem \ref{prop3foldpic3}. Let $N$ be a finitely generated free 
abelian group, $M := {\rm Hom}(N,\mathbb Z)$, $N_{\mathbb Q} := N\otimes_{\mathbb Z}\mathbb Q$, and let $X := X(\Sigma)$ be a smooth 
projective toric variety defined by a 
fan $\Sigma\subseteq N_{\mathbb Q}$.
Since $X$ is projective $\Sigma$
is the normal fan of the Riemann-Roch
polytope $\Delta\subseteq M_{\mathbb Q}$
of any ample divisor of $X$, in particular
each facet of $\Delta$ corresponds to
a one dimensional cone of $\Sigma$.
Since $X$ is smooth each maximal cone 
of $X$ is simplicial so that each vertex 
of $\Delta$ has valence $n := \dim(X)$.

When $X$ is a threefold of Picard rank
three the polytope $\Delta$ has six facets
and all its vertexes have valency three.
If we denote by $(v,e,f)$ the vector 
consisting of the number of vertexes, 
edges and facets of $\Delta$, then
the conditions $f = 6$, $2e=3v$
and $v-e+f=2$ give $(v,e,f) = (8,12,6)$.
According to the classification of
hexahedra there are only two possible
topological types for $\Delta$, displayed
in the following pictures.\\

\begin{center}
\begin{tikzpicture}[scale=1.5,x={(1cm,-0.4cm)},y={(-0.8cm,-0.5cm)},z={(0cm,1cm)}]
\coordinate (O) at (0,0,0);
\coordinate (A) at (0,1,0);
\coordinate (B) at (0,1,1);
\coordinate (C) at (0,0,1);
\coordinate (D) at (1,0,0);
\coordinate (E) at (1,1,0);
\coordinate (F) at (1,1,1);
\coordinate (G) at (1,0,1);
\draw[thick] (A) -- (E) -- (F);
\draw[thick] (A) -- (B);
\draw[thick] (E) -- (D);
\draw[thick] (G) -- (D);
\draw[thick] (C) -- (B) -- (F) -- (G) -- cycle;
\draw[dashed] (O) -- (C);
\draw[dashed] (O) -- (A);
\draw[dashed] (O) -- (D);
\node at (-1.3,-0.3) {$\small 1$};
\node at (0,-1.5) {$\small 2$};
\node at (1.2,1.2) {$\small 3$};
\node at (0.1,0.4) {$\small 5$};
\node at (0.5,-0.1) {$\small 4$};
\node at (-0.4,-0.65) {$\small 6$};

\begin{scope}[xshift=5cm]
\coordinate (O) at (0,0,0);
\coordinate (A) at (0,0,1);
\coordinate (B) at (0,1,0);
\coordinate (C) at (1,0,0);
\coordinate (D) at (1,0,1);
\coordinate (E) at (0,1,1);
\coordinate (F) at (1.25,1,0);
\coordinate (G) at (1,1.25,0);
\draw[thick] (A) -- (D) -- (F) -- (G) -- (E) -- cycle;
\draw[thick] (D) -- (C) -- (F);
\draw[thick] (E) -- (B) -- (G);
\draw[dashed] (O) -- (A);
\draw[dashed] (O) -- (C);
\draw[dashed] (O) -- (B);
%\node at (-0.2,0) {$\small 1$};
%\node at (0,-0.2) {$\small 2$};
%\node at (0.2,0.2) {$\small 3$};
\node at (-1.3,-0.3) {$\small 1$};
\node at (0,-1.5) {$\small 2$};
\node at (1.4,1.4) {$\small 3$};
\node at (-0.3,0.4) {$\small 5$};
\node at (0.6,-0.4) {$\small 4$};
\node at (0.3,0.3) {$\small 6$};
\end{scope} 
\end{tikzpicture}
\end{center}

Number the facets of each polytope 
from $1$ to $6$ and let $C_i$ be the 
primitive generator of the inward normal
vector to the $i$-th facet. 
Any vertex is labeled by a triple $(i,j,k)$
in such a way that
\begin{equation}
 \label{sm}
 \det(C_i,C_j,C_k) = 1.
\end{equation}
For the type I polytope the vertexes
labels are: $(1,2,3)$, $(2,4,3)$, $(1,6,2)$, 
$(1,3,5)$, $(4,2,6)$, $(1,5,6)$, $(3,4,5)$, 
$(4,6,5)$, while
for the type II polytope the vertexes
labels are: $(1,2,3)$, $(1,3,5)$, $(1,6,2)$, $(2,4,3)$,
$(1,5,6)$, $(4,2,6)$, $(6,5,3)$, $(3,4,6)$.
Without loss of generality we can assume
$\{C_1,C_2,C_3\}$ to be the canonical 
basis of $N_{\mathbb Q}\simeq \mathbb Q^3$.
Thus the matrix whose columns are 
the primitive generators of the fan $\Sigma$
and its orthogonal have the form
\[
 P := 
 \begin{bmatrix}
  1&0&0&a_1&a_2&a_3\\
  0&1&0&b_1&b_2&b_3\\
  0&0&1&c_1&c_2&c_3
 \end{bmatrix}
 \qquad
 \qquad
 Q := 
 \begin{bmatrix}
  -a_1&-b_1&-c_1&1&0&0\\
  -a_2&-b_2&-c_2&0&1&0\\
  -a_3&-b_3&-c_3&0&0&1
 \end{bmatrix}
\]

Applying conditions~\eqref{sm} to all 
type I triples gives the following 
equations 
$a_1 = b_2 = c_3 = -1$,
$ a_3c_1 = b_3c_2 =  a_2b_1 = 0$,
$a_2b_3c_1 + a_3b_1c_2 = 0$.
These equations cut out the union
of six three-dimensional affine spaces
which are in a unique orbit of the
$S_3$ action which permutes the 
coordinates.
One of these spaces is given by
$a_1 = b_2 = c_3 = -1$,
$b_1 = c_1 = c_2 = 0$.
The corresponding $Q$ matrix is
\[
 Q_1 := 
 \begin{bmatrix}
   1&0&0&1&0&0\\
  -a_2&1&0&0&1&0\\
  -a_3&-b_3&1&0&0&1
 \end{bmatrix}
% P := 
% \begin{bmatrix}
%  1&0&0&-1&a_2&a_3\\
%  0&1&0&0&-1&b_3\\
%  0&0&1&0&0&-1
% \end{bmatrix}
% \qquad
% \qquad
% Q_1 := 
% \begin{bmatrix}
%  1&0&0&1&0&0\\
%  0&1&0&a_2&1&0\\
%  0&0&1&a_2b_3+a_3&b_3&1
% \end{bmatrix}
% \qquad
\]

%The matrix $Q$ is the grading matrix
%of a toric variety $X$. 
Any integer 
vector $(a_2,a_3,b_3)$ gives an
example of a smooth toric threefold
whose defining fan is of type I.

Applying conditions~\eqref{sm} to all 
the type II triples gives the following 
equations 
$a_1 = b_2 = c_3 = -1$,
$b_3c_2 = a_3c_1 = 0$,
$a_2b_3 + a_3 + 1 = 0$,
$b_3 + a_3b_1 + 1 = 0$.
These equations cut out the union
of three irreducible subvarieties 
of dimension two. Two of these 
are the affine spaces
$a_1 = b_2 = b_3 = c_3 = -1$,
$a_3 = c_2 = 0$, $a_2 = 1$
and 
$a_1 = b_2 = a_3 = c_3 = -1$,
$b_3 = c_1 = 0$, $b_1 = 1$.
The $Q$ matrix for points on the 
first affine space is
\[
 Q_2 := 
 \begin{bmatrix}
   1&-b_1&-c_1&1&0&0\\
   -1&1&0&0&1&0\\
   0&1&1&0&0&1
 \end{bmatrix}
% \qquad
% P := 
% \begin{bmatrix}
%  1&0&0&-1&1&0\\
%  0&1&0&b_1&-1&-1\\
%  0&0&1&c_1&0&-1
% \end{bmatrix}
% \qquad
% Q_2 := 
% \begin{bmatrix}
% b_1 - c_1 - 1&0&0&-1&-b_1 + c_1&-c_1\\
% 0&b_1 - c_1 - 1&0&-1&-1&-c_1\\
% 0&0&b_1 - c_1 - 1&1&1&b_1 - 1
% \end{bmatrix}
% \qquad
\]

The second affine space gives a similar 
$Q$ matrix.
The third variety has equations
$a_1 = b_2 = c_3 = -1$,
$c_1 = c_2 = 0$,
$a_2b_3 + a_3 + 1 = 0$,
$a_3b_1 + b_3 + 1 = 0$. Considering only integer points this third  variety is the union of a point and two one parameter families corresponding to the following grading matrices
\[
 Q_3 := 
 \begin{bmatrix}
   1&1&0&1&0&0\\
   -1&1&0&0&1&0\\
   0&1&1&0&0&1
 \end{bmatrix}
,
 Q_4 := 
 \begin{bmatrix}
   1&-1&0&1&0&0\\
   -a_2&1&0&0&1&0\\
   1&0&1&0&0&1
 \end{bmatrix}
,
 Q_5 := 
 \begin{bmatrix}
   1&0&0&1&0&0\\
   -a_2&1&0&0&1&0\\
   -a_2+1&1&1&0&0&1
 \end{bmatrix}.
\]

Note now that the case $Q_3$ is a sub-case of $Q_2$ and the case $Q_5$ is a sub-case of $Q_1$. Therefore it is enough to analyze $Q_1,Q_2$ and $Q_4.$ We will call $A,B,\dots,F$ the columns of $Q.$ Note that in all cases $D=(1,0,0),E=(0,1,0)$ and $F=(0,0,1).$

In the case of $Q_1$ we have $C=F$ and the point $B$ moves in the half line from $F$ to $E$. There are three possibilities $b_3<0,b_3=0,b_3>0.$ If $b_3=0$ there is nine possibilities for the point $A$ according to $a_2$ and $a_3$ being negative, zero or positive. If $b_3<0$ we now have four more possibilities depending if the point $A$ is above or below the line $DF,$ therefore $13$ possibilities. For $b_3>0$ we have $13$ possibilities as well. Summing up we have $35$ possibilities. Choosing a particular value of $b_2,a_2,a_3$ for each of the $35$ cases and computing the Mori chamber and the stable base locus decomposition one gets exactly two cases where the Mori chamber and stable base locus decomposition do not coincide, corresponding to the following matrices 
\[
 G_1 := 
 \begin{bmatrix}
   1&0&0&1&0&0\\
   \alpha&1&0&0&1&0\\
   \beta&0&1&0&0&1
 \end{bmatrix}, \alpha,\beta >0, \ 
  G_2 := 
 \begin{bmatrix}
   1&0&0&1&0&0\\
   \alpha&1&0&0&1&0\\
   \beta&\gamma&1&0&0&1
 \end{bmatrix}, \alpha,\beta ,\gamma<0
\] 

Now, consider the case $Q_2$. In this case $A$ is fixed, $B$ moves in the line connecting $(0,1,1)$ to $D$ and $C$ moves in the line $DF.$ If $c_1=0$ then $C=D$ and $B$ has three possibilities according to sign of $b_1$. If $c_1<0$ there are five possibilities for $b_1$ depending on the sign and on which of the inequalities $b_1\geq c_1$ and $b_1 < c_1$ holds.
For $c_1>0$ there are five possibilities as well but now what matters is the sign of $b_1$ and if it is greater than $c_1+1$ or not. Therefore there are $13$ possibilities to check. Computing explicitly the decompositions we get $4$ new possible types of varieties whose Mori chamber and stable base locus decompositions do not coincide:
\[
\begin{array}{ll}
G_3 := 
 \begin{bmatrix}
   1&0&\gamma&1&0&0\\
   -1&1&0&0&1&0\\
   0&1&1&0&0&1
 \end{bmatrix} , \gamma>0,\
 & G_4 := 
 \begin{bmatrix}
   1&\beta&\gamma&1&0&0\\
   -1&1&0&0&1&0\\
   0&1&1&0&0&1
 \end{bmatrix}, \beta>0>\gamma\\[20pt]
  G_5 := 
 \begin{bmatrix}
   1&\beta&\gamma&1&0&0\\
   -1&1&0&0&1&0\\
   0&1&1&0&0&1
 \end{bmatrix} , \beta<0<\gamma, \ 
 & G_6 := 
 \begin{bmatrix}
   1&\beta&\gamma&1&0&0\\
   -1&1&0&0&1&0\\
   0&1&1&0&0&1
 \end{bmatrix} , \beta<\gamma-1<-1
\end{array}
\]

Finally, doing the same with $Q_4$ we have to consider four cases:
$a_2<0$ ($A$ inside the triangle $DEF$),
$a_2=0$ ($A$ in the midpoint of $CF$),
$a_2=1$ ($A$ in the line $BF$), and $a_2>2$ ($A$ below the line $BF$).
Doing this we get a single new type:
\[G_7 := 
 \begin{bmatrix}
   1&-1&0&1&0&0\\
   1&1&0&0&1&0\\
   1&0&1&0&0&1
 \end{bmatrix} 
\]
This proves Theorem~\ref{prop3foldpic3}.

\section{Mori dream spaces of Picard rank two}\label{mainSec}
Let $X$ be a Mori dream space with 
divisor class group ${\rm Cl}(X)$ of 
rank two.
Since $X$ is a projective variety its 
effective cone is pointed. Moreover ${\rm Cl}(X)$ 
has rank two so that we can fix a total order
on the classes in the effective cone $w\leq w'$ 
if $w$ is on the left of $w'$. Given two convex cones 
$\lambda,\lambda'$ contained in the effective cone 
we will write
\[
 \lambda\leq \lambda'
 \quad
 \text{if $w\leq w'$ for any $w\in\lambda$ 
 and $w'\in\lambda$}
\]
Denote by $\{f_1,\dots,f_r\}$ a minimal set 
of homogeneous generators for the Cox ring 
$\mathcal{R}(X)$ of $X$, and let 
$w_i = \deg(f_i)$ for any $i$.

\begin{Proposition}\label{sm_toric}
Let $X$ be a projective $\mathbb{Q}$-factorial toric variety with Picard rank two. Then the Mori chamber and the stable base locus decompositions of $\Eff(X)$ coincide. 
\end{Proposition}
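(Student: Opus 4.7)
The plan is to deduce Proposition~\ref{sm_toric} directly from Theorem~\ref{th2}(ii). First I would observe that since $X$ is a projective $\mathbb{Q}$-factorial toric variety, its Cox ring $\mathcal{R}(X)$ is a polynomial ring. Consequently the canonical toric embedding of~\cite[Section 3.2.5]{ADHL15} identifies $X$ with itself (the Cox ring is cut out by no equations in the ambient toric variety it naturally lives in), so the codimension $c$ appearing in the statement of Theorem~\ref{th2} equals zero.

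Next I would verify the numerical condition $h^->c$ and $h^+>c$, which in this setting reduces to $h^-\geq 1$ and $h^+\geq 1$. Since the Cox ring is polynomial, every face of the positive orthant $\gamma$ is an $\mathfrak F$-face, and therefore the walls of the GIT chamber decomposition of $\Eff(X)$ are exactly the rays spanned by the degrees $w_1,\dots,w_r$ of the Cox ring generators. In Picard rank two the ample chamber $\lambda_A$ is a two-dimensional cone, hence its two bounding rays are among these generator rays. This produces at least one $w_i$ with $w_i\leq\lambda_A$ (the one on the left boundary of $\lambda_A$) and at least one with $w_i\geq\lambda_A$ (the one on the right boundary), giving $h^-\geq 1$ and $h^+\geq 1$.

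With both bullets of Theorem~\ref{th2}(ii) verified, the conclusion $\MCD(X)=\SBLD(X)$ follows. The only subtlety in the plan is the claim $c=0$ for toric varieties; once it is noted that a toric variety is its own canonical toric embedding because no equations appear in its Cox presentation, the inequalities $h^\pm\geq 1$ are essentially free from the rank-two geometry of $\Eff(X)$. Alternatively, one could give a self-contained proof using Proposition~\ref{re:sbl}: enumerate the distinct generator rays $\rho_1<\cdots<\rho_s$, describe the bunch of orbit cones $\Phi(\lambda_A)$ explicitly, and check that for each pair of adjacent maximal GIT chambers $\lambda_j,\lambda_{j+1}$ there is an orbit cone of the form $\operatorname{cone}(\rho_a,\rho_b)$ lying in $\Phi$ that contains one chamber and not the other; this witnesses distinct SBL chambers and forces $\MCD=\SBLD$ by monotonicity of the associated sets $\{\omega\in\Phi:\lambda_j\subseteq\omega\}$ as $j$ varies. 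The appeal to Theorem~\ref{th2}(ii) is, however, essentially a one-line argument, so that is the route I would take.
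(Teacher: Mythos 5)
Your proof is correct, but it takes a genuinely different route from the paper's. The paper proves Proposition~\ref{sm_toric} directly and self-containedly: it orders the maximal GIT chambers, uses the fact that for a toric variety every pair of generator degrees spans an orbit cone (the Cox ring is a polynomial ring, so every face of $\gamma$ is an $\mathfrak F$-face), and for any two distinct maximal chambers $\lambda'\leq\lambda''$ exhibits an orbit cone containing one of $\lambda_A\cup\lambda'$, $\lambda_A\cup\lambda''$ but not the other, which by the criterion~\eqref{equality} forces their stable base loci to differ --- this is essentially the ``alternative'' argument you sketch in your last sentences. Your primary route, invoking Theorem~\ref{th2}(ii) (i.e.\ Theorem~\ref{main2}) with $c=0$, is valid and, importantly, not circular: the chain Lemma~\ref{le:ss} $\to$ Lemma~\ref{le:gitloc} $\to$ Lemma~\ref{SBLc} $\to$ Theorem~\ref{main} $\to$ Theorem~\ref{main2} nowhere uses Proposition~\ref{sm_toric}, and the paper itself remarks in the proof of Corollary~\ref{hyp} that the toric case is ``an immediate consequence of Theorem~\ref{main2} with $c=0$.'' Your verification of the hypotheses is sound: $c=0$ because a toric variety is its own canonical toric embedding, and $h^\pm\geq 1$ because the two extremal rays of $\Eff(X)=\operatorname{cone}(w_1,\dots,w_r)$ are spanned by generator degrees, so some $w_i$ lies weakly to the left of $\lambda_A$ and some weakly to the right. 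The trade-off is purely expository: in the paper Proposition~\ref{sm_toric} precedes Theorem~\ref{main2}, so adopting your argument verbatim would require reordering the section; what it buys is that the toric case becomes a one-line corollary of the general criterion instead of a separate orbit-cone argument.
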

\begin{proof}
Let $\lambda_A$ be the 
semi-ample cone of $X$ and let $\lambda'$,
$\lambda''$ be two distinct maximal GIT 
chambers of $X$. 

According to~\eqref{equality}
it suffices to show that there exists an orbit 
cone of the bunch which contains one of 
$\lambda_A\cup\lambda'$, $\lambda_A\cup\lambda''$
but not the other. Since $X$ is complete
the effective cone is pointed, so that,
since the Picard rank is two, one can
order the GIT chambers of $X$. 

Assuming $\lambda'\leq\lambda''$, we have three possibilities:
either $\lambda'\leq \lambda_A\leq \lambda''$,
or $\lambda_A\leq \lambda'\leq \lambda''$,
or $\lambda'\leq \lambda''\leq \lambda_A$.
Since $X$ is toric each pair of degrees of 
generators of the Cox ring span an orbit
cone. Thus in the first two cases we
can find an orbit cone which contains 
$\lambda_A\cup\lambda'$ but not 
$\lambda_A\cup\lambda''$, while in the 
last case we can find an orbit cone which 
contains $\lambda_A\cup\lambda''$ but not 
$\lambda_A\cup\lambda'$.
\end{proof}

The following is our first simple criterion implying the equality of the two chamber decompositions for a $\mathbb{Q}$-factorial Mori dream space of Picard rank two.

\begin{Proposition}\label{crit1}
Let $X$ be a projective $\mathbb{Q}$-factorial Mori dream space with Picard rank two. If all the generators of $\mathcal{R}(X)$ appear in the walls of the stable base locus decomposition of $\Eff(X)$ then the Mori chamber and the stable base locus decompositions of $\Eff(X)$ coincide. 
\end{Proposition}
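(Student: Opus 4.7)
The plan is to exploit the fact that $\rho(X)=2$ forces $\Eff(X)$ to be two-dimensional, so every fan structure on it is completely determined by its one-dimensional walls, which are rays in $\Cl(X)_{\mathbb Q}\simeq\mathbb Q^2$. By Remark~\ref{SBLMC}, $\MCD(X)$ is a refinement of $\SBLD(X)$, so every wall of $\SBLD(X)$ is automatically a wall of $\MCD(X)$. To conclude $\MCD(X)=\SBLD(X)$ it therefore suffices to establish the reverse containment, namely that every wall of $\MCD(X)$ is also a wall of $\SBLD(X)$.

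The core step is to show that every wall of $\MCD(X)$ has the form $\cone(w_i)$ for some generator $f_i$ of $\mathcal{R}(X)$. By the definition~\eqref{gitch}, the GIT chamber $\lambda(w)$ is the intersection of all orbit cones containing $w$, so two classes lie in the relative interior of the same GIT chamber iff they are contained in exactly the same collection of orbit cones. Thus a ray $\rho\subseteq\Eff(X)$ is a wall of $\MCD(X)$ precisely when $\rho$ lies on the boundary of some orbit cone $\omega\in\Omega$. Now each orbit cone is the $Q$-image of an $\mathfrak F$-face and is therefore spanned by a subset of $\{w_1,\dots,w_r\}$; in Picard rank two this means that each orbit cone is either a single ray $\cone(w_i)$ or a two-dimensional cone whose two extremal rays are of the form $\cone(w_j)$, $\cone(w_k)$. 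In either case the boundary rays of orbit cones all lie in $\{\cone(w_1),\dots,\cone(w_r)\}$, and the same holds for the extremal rays of $\Eff(X)$ itself since they are generated by the extremal $w_i$'s.

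Combining these two observations completes the proof. Indeed, the hypothesis that every generator of $\mathcal{R}(X)$ appears on a wall of $\SBLD(X)$ means, in our rank-two setting, that each ray $\cone(w_i)$ is a wall of $\SBLD(X)$. Together with the previous paragraph this forces every wall of $\MCD(X)$ to be a wall of $\SBLD(X)$, and combining with the refinement direction one obtains $\MCD(X)=\SBLD(X)$.

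The only genuine technical point is the claim that every $\MCD(X)$-wall is of the form $\cone(w_i)$. This is essentially a bookkeeping statement about $\mathfrak F$-faces and their $Q$-images, following~\cite[Section~3.1.2]{ADHL15}: because $\Cl(X)_{\mathbb Q}$ is only two-dimensional, an orbit cone can have at most two extremal rays, and each of them is the image of a single generator $e_i$ of $\gamma$, hence equals $\cone(w_i)$. Once this is in place, the rest of the argument is formal and parallels the toric proof of Proposition~\ref{sm_toric}, the difference being that in the non-toric case one cannot assume that every pair $(w_i,w_j)$ spans an orbit cone, but one only needs that each orbit cone is spanned by some $w_i$'s, which is automatic.
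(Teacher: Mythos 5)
Your proposal is correct and follows essentially the same route as the paper: the paper's proof likewise observes, via the GIT-chamber description~\eqref{gitch}, that every wall of $\MCD(X)$ is a ray $\cone(w_i)$ for some generator, and then concludes that since all such rays are already walls of $\SBLD(X)$, the refinement $\MCD(X)\to\SBLD(X)$ must be trivial. You merely spell out in more detail the bookkeeping about orbit cones being spanned by the $w_i$'s, which the paper leaves implicit.
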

\begin{proof}
By (\ref{gitch}) the Mori chamber decomposition is a subdivision of $\Eff(X)$ whose walls are given by some of the generators of $\mathcal{R}(X)$, and it is a refinement of the stable base locus decomposition. Since, by hypothesis all the generators appear as walls of the stable base locus decomposition such a refinement must be trivial.
\end{proof}

Now, we develop some technical results in order to describe the semi-stable loci corresponding to the GIT chambers of the Mori chamber decomposition of a Mori dream space of Picard rank two.

\begin{Lemma}\label{le:ss}
Let $X$ be a Mori dream space with 
Picard rank two, $\lambda\subseteq
{\rm Cl}_{\mathbb Q}(X)$ a maximal 
GIT chamber of $X$, and $\overline 
X^{\rm ss}(\lambda)$ the corresponding
subset of semi-stable points of $\overline X$.
Then the following holds
\[
 \overline X\setminus\overline X^{\rm ss}(\lambda)
 =
 V(f_i\, :\, w_i\leq\lambda)\cup V(f_i\, :\, \lambda\leq w_i)
\]
\end{Lemma}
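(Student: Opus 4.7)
The plan is to unravel the GIT definition of the semi-stable locus and translate it into the non-vanishing of certain generators of $\mathcal{R}(X)$, using the Picard rank two hypothesis to reduce everything to the linear order $\leq$ on $\Cl(X)_{\mathbb Q}$.

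First, I would recall from \cite[Section 3.1]{ADHL15} that a point $x\in\overline X$ is semi-stable for the polarization $\lambda$ if and only if the orbit cone of $x$ contains the relative interior of $\lambda$; equivalently, since $\lambda$ is a maximal GIT chamber, if and only if $\lambda\subseteq\omega(x)$, where
\[
 \omega(x) \;=\; \cone(w_i\, :\, f_i(x)\neq 0).
\]
In other words, $x$ is semi-stable precisely when some class in $\lambda^\circ$ can be written as a non-negative combination of the degrees of the generators that do not vanish at $x$.

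Next, I would use the Picard rank two hypothesis to obtain a concrete criterion for $\lambda\subseteq\omega(x)$. Because $\Cl(X)_{\mathbb Q}\cong\mathbb Q^2$ and every $w_i$ lies in the pointed effective cone, one may speak of the leftmost and rightmost element among $\{w_i\,:\,f_i(x)\neq 0\}$ with respect to the total order $\leq$, and these span the extremal rays of $\omega(x)$. Thus $\lambda\subseteq\omega(x)$ is equivalent to the existence of indices $i,j$ with $f_i(x)\neq 0$, $f_j(x)\neq 0$, and $w_i\leq \lambda \leq w_j$. The degenerate cases $\omega(x)=\{0\}$ and $\omega(x)$ a single ray are immediate: in both, $\lambda\not\subseteq\omega(x)$, so $x$ is not semi-stable and the two sets $V(f_i:w_i\leq\lambda)$ and $V(f_j:\lambda\leq w_j)$ already contain $x$. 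A short remark that no $w_i$ lies in the interior of a maximal GIT chamber $\lambda$ (otherwise the ray $\mathbb{Q}_{\geq 0}w_i$ would intersect $\lambda^\circ$ and refine it) ensures that the dichotomy "$w_i\leq \lambda$ or $\lambda\leq w_i$" exhausts all possibilities, so no generator is missed.

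Finally, negating the criterion, $x$ fails to be semi-stable for $\lambda$ exactly when either every $i$ with $w_i\leq \lambda$ satisfies $f_i(x)=0$, i.e. $x\in V(f_i\,:\,w_i\leq \lambda)$, or every $j$ with $\lambda\leq w_j$ satisfies $f_j(x)=0$, i.e. $x\in V(f_j\,:\,\lambda\leq w_j)$. This yields the claimed equality. The argument is essentially bookkeeping once the Picard rank two structure is invoked; the only conceptual step is the translation of semi-stability into the containment $\lambda\subseteq\omega(x)$, which is the expected main (but standard) obstacle.
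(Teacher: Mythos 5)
Your proof is correct and in substance the same as the paper's: the paper works directly with monomials in the $f_1,\dots,f_r$ (every homogeneous section of degree $nw$ with $w\in\lambda^\circ$ must involve a generator $f_i$ with $w_i\leq\lambda$ and one with $\lambda\leq w_i$, and conversely a product $f_i^af_j^b$ with $w_i\leq\lambda\leq w_j$ and $aw_i+bw_j\in\lambda^\circ$ exhibits semi-stability), which is precisely the orbit-cone characterization $\overline X^{\rm ss}(\lambda)=\{x:\lambda\subseteq\omega(x)\}$ with $\omega(x)=\cone(w_i:f_i(x)\neq 0)$ that you import from \cite{ADHL15}. One caveat: your parenthetical claim that no $w_i$ can lie in $\lambda^{\circ}$ is not justified as stated --- the ray $\mathbb{Q}_{\geq 0}w_i$ refines the GIT fan only if it arises as a face of an orbit cone, which can fail when $\cone(e_i)$ is not an $\mathfrak{F}$-face (e.g.\ for a relation of the shape $f_1f_3-f_2^2+\dots$ the middle degree $w_2$ can sit in the interior of a maximal chamber). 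Fortunately this remark is not load-bearing: the negation of ``there exist $i,j$ with $f_i(x)\neq 0\neq f_j(x)$ and $w_i\leq\lambda\leq w_j$'' is exactly membership in $V(f_i:w_i\leq\lambda)\cup V(f_i:\lambda\leq w_i)$ whether or not some generator degrees lie in $\lambda^{\circ}$ (such generators simply occur in neither vanishing set, and the containment $\supseteq$ still follows because every degree that is not $\leq\lambda$ lies strictly to the right of the left edge of $\lambda$), so I would simply delete that remark rather than try to repair it.
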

\begin{proof}
By~\cite[Theorem 3.1.2.8]{ADHL15} we have $\overline X^{\rm ss}(\lambda) =
\overline X^{\rm ss}(w)$ for any $w\in\lambda^\circ$, where the second semi-stable locus
is the complement of the zero set of all the homogeneous 
sections of the Cox ring whose degree 
is a positive multiple of $w$.
If we choose such a class
$w\in\lambda^\circ$ so that $w < w_i$ 
for any $w_i\in\lambda^\circ$ then each
monomial in $f_1,\dots,f_r$ of degree 
$nw$ must contain at least one $f_i$
with $w_i\leq \lambda$. Thus the inclusion
\[
 V(f_i\, :\, w_i\leq\lambda)
 \subseteq 
 \overline X\setminus\overline X^{\rm ss}(\lambda)
\]
follows. The analogous inclusion for $V(f_i\, :\, \lambda\leq w_i)$
can be proved in a similar way. 

To prove the opposite inclusion observe
that if $\overline x\in\overline X$ is a point
which does not belong to the union 
$V(f_i\, :\, w_i\leq\lambda) \cup 
V(f_i\, :\, \lambda\leq w_i)$, then there exist
two sections $f_i$, with $w_i\leq\lambda$, 
and $f_j$, with $\lambda\leq w_j$, each of 
which does not vanish on $\overline x$.

Take non negative $a,b\in \mathbb{Z}$ such that $aw_i+bw_j\in \lambda^\circ$.
Since $f_i^af_j^b$ is a homogeneous element 
of the Cox ring of degree $aw_i+bw_j\in
\lambda^\circ$ which does not 
vanish on $\overline x$, the point $\overline x$
is in $\overline X^{\rm ss}(\lambda)$.
\end{proof}

\begin{Lemma}\label{le:gitloc}
Let $X$ be a Mori dream space with 
Picard rank two and let $\lambda,\lambda'\subseteq
{\rm Cl}_{\mathbb Q}(X)$ be two maximal
distinct GIT chambers of $X$ with $\lambda\leq\lambda'$.
Then the following inclusion is strict
\[
 V(f_i\, :\, w_i\leq\lambda')\subsetneqq V(f_i\, :\, w_i\leq\lambda)
\]
\end{Lemma}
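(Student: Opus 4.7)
The plan is to verify the containment first and then exhibit an explicit point witnessing strict inclusion. Since the order on ${\rm Cl}_{\mathbb Q}(X)$ is transitive, the inequality $w_i\leq\lambda$ together with $\lambda\leq\lambda'$ gives $w_i\leq\lambda'$, so $\{f_i\,:\,w_i\leq\lambda\}\subseteq\{f_i\,:\,w_i\leq\lambda'\}$; passing to zero sets reverses the inclusion, yielding $V(f_i\,:\,w_i\leq\lambda')\subseteq V(f_i\,:\,w_i\leq\lambda)$.

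For strictness I would focus on the right wall $\rho$ of the chamber $\lambda$. Because $\lambda$ and $\lambda'$ are distinct maximal GIT chambers of a Picard rank two Mori dream space with $\lambda\leq\lambda'$, this wall is a one-dimensional orbit cone with $\rho\leq\lambda'$ (it coincides either with the left wall of $\lambda'$, or lies to its left through an intermediate chamber). By definition of orbit cone, $\rho = Q(\gamma_0)$ for some $\mathfrak F$-face $\gamma_0\preceq\gamma$, and since $\rho$ is a ray, $\gamma_0$ must have at least one cone index $j$; the corresponding generator $w_j$ lies in $\rho$, hence satisfies $w_j\leq\lambda'$ while sitting on the right wall of $\lambda$ rather than its left wall, so $w_j\not\leq\lambda$.

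I then take $p\in\overline X(\gamma_0)$, which is nonempty by the $\mathfrak F$-face property. The non-vanishing coordinates of $p$ are precisely the cone indices of $\gamma_0$ and thus correspond to generators on the ray $\rho$. For every index $i$ with $w_i\leq\lambda$, the class $w_i$ lies on or to the left of the left wall of $\lambda$, so in particular $w_i\notin\rho$; consequently $f_i(p)=0$ and $p\in V(f_i\,:\,w_i\leq\lambda)$. On the other hand $f_j(p)\neq 0$ with $w_j\leq\lambda'$, showing $p\notin V(f_i\,:\,w_i\leq\lambda')$ and giving the desired strict inclusion. The step needing the most care is the verification that the wall $\rho$ actually carries a generator, i.e.\ that the $\mathfrak F$-face $\gamma_0$ with $Q(\gamma_0)=\rho$ has non-empty cone-index set; this is forced by the one-dimensionality of $\rho$, since the only face of $\gamma$ mapping to the origin under $Q$ is the trivial face, and the remainder of the argument is essentially a translation of the Cox construction dictionary together with Lemma \ref{le:ss}.
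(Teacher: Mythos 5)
Your reduction of the lemma to exhibiting one witness point, and your verification of the non-strict inclusion $V(f_i:w_i\leq\lambda')\subseteq V(f_i:w_i\leq\lambda)$, are fine. The gap is the sentence ``by definition of orbit cone, $\rho=Q(\gamma_0)$ for some $\mathfrak F$-face $\gamma_0$.'' The right wall $\rho$ of $\lambda$ is a cone of the GIT fan, i.e.\ an intersection $\bigcap_{\{\omega\,:\,w\in\omega\}}\omega$ of orbit cones for $w\in\rho^\circ$; it is \emph{not} in general an orbit cone itself. Orbit cones are images of $\mathfrak F$-faces, and $\mathfrak F$-faces are constrained by the relations of the Cox ring: a face of the positive orthant mapping onto $\rho$ (for instance ${\rm cone}(e_j)$ for a generator with $w_j\in\rho$) need not be an $\mathfrak F$-face. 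Compare the paper's example $K[T_1,\dots,T_5]/\langle T_1T_2+T_3^2+T_4T_5\rangle$, where ${\rm cone}(e_1,e_2)$ is not an $\mathfrak F$-face even though it is a face of the $\mathfrak F$-face ${\rm cone}(e_1,e_2,e_3)$. So the point $p\in\overline X(\gamma_0)$ on which your whole construction rests may not exist. The caveat you flag at the end (that only the trivial face of $\gamma$ maps to the origin) addresses a different and much easier issue; it does not produce the required $\mathfrak F$-face.

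The argument can be repaired, but it needs one more genuine step. Let $\lambda''$ be the maximal GIT chamber adjacent to $\lambda$ along $\rho$ on the right. If every orbit cone containing a class in $\lambda''^\circ$ also contained a class in $\lambda^\circ$, then $\lambda=\bigcap_{\{\omega\,:\,w\in\omega\}}\omega$ would be contained in $\lambda''$, impossible for two distinct full-dimensional cones of a fan; hence there is an orbit cone $\omega=Q(\gamma_1)$ with $\lambda''\subseteq\omega$ and $\omega\cap\lambda^\circ=\emptyset$, and its left boundary is necessarily $\rho$. A point of $\overline X(\gamma_1)$ then works as your witness: all of its nonvanishing coordinates have degrees in $\omega$, hence none satisfies $w_i\leq\lambda$, while the leftmost one has degree on $\rho$ and therefore satisfies $w_j\leq\lambda'$. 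The paper sidesteps all of this by arguing by contradiction: if the two vanishing loci were equal, then together with the trivial inclusion $V(f_i:\lambda\leq w_i)\subseteq V(f_i:\lambda'\leq w_i)$ and Lemma~\ref{le:ss} one would get $\overline X^{\rm ss}(\lambda')\subseteq\overline X^{\rm ss}(\lambda)$, which by \cite[Theorem 3.1.2.8]{ADHL15} forces $\lambda\subseteq\lambda'$, contradicting that the two chambers are distinct and maximal. As written, your proof is not complete.
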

\begin{proof}
Assume that the equality 
$V(f_i\, :\, w_i\leq\lambda') = 
V(f_i\, :\, w_i\leq\lambda)$ holds.
By hypothesis the inclusion
$V(f_i\, :\, \lambda\leq w_i) \subseteq 
V(f_i\, :\, \lambda'\leq w_i)$ holds.
Thus by Lemma~\ref{le:ss} there would 
be an inclusion 
$\overline X^{\rm ss} (\lambda')
 \subseteq \overline X^{\rm ss} (\lambda)$.
By \cite[Theorem 3.1.2.8]{ADHL15} the latter
inclusion would imply $\lambda\subseteq\lambda'$,
a contradiction. 
\end{proof}

Recall that a Mori dream space $X$ is a good quotient of its 
characteristic space $\widehat X = \overline X^{\rm ss}(\lambda_A)$, and denote by $p_{\lambda_A}\colon \widehat X\to X$ the good quotient map. The following simple characterization of stable base loci will be fundamental for the rest of the paper.

\begin{Lemma}\label{SBLc}
If $\lambda\leq\lambda_A$ then the stable base locus of a
class $w\in\lambda$ is 
\stepcounter{thm}
\begin{equation}\label{sbl1}
 {\bf B}(w) 
  =
 p_{\lambda_A}(\widehat X\setminus \overline X^{\rm ss}(\lambda))
  =
 p_{\lambda_A}(\widehat X\cap V(f_i\, :\, w_i\leq \lambda))
\end{equation}
If $\lambda\geq\lambda_A$ then the stable base locus of a
class $w\in\lambda$ is 
\stepcounter{thm}
\begin{equation}\label{sbl2}
 {\bf B}(w) 
  =
 p_{\lambda_A}(\widehat X\setminus \overline X^{\rm ss}(\lambda))
  =
 p_{\lambda_A}(\widehat X\cap V(f_i\, :\, w_i\geq \lambda))
\end{equation}
\end{Lemma}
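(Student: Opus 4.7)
The plan is to deduce the lemma by combining two ingredients: a chamber-independent Cox-theoretic description of the stable base locus, together with Lemma~\ref{le:ss} which already computes the unstable locus $\overline X \setminus \overline X^{\rm ss}(\lambda)$ as a union of two coordinate subvarieties.

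First I would establish, for any maximal GIT chamber $\lambda$ and any $w\in\lambda^\circ$, the chamber-independent identity
$$\mathbf{B}(w) = p_{\lambda_A}\bigl(\widehat X \setminus \overline X^{\rm ss}(\lambda)\bigr).$$
Via the Cox construction a section in $H^0(X,\mathcal O_X(sw))$ is a homogeneous element of $\mathcal R(X)_{sw}$, and such a section vanishes at $x\in X$ precisely when it vanishes on the fiber $p_{\lambda_A}^{-1}(x)$. Consequently $x\notin \mathbf{B}(w)$ if and only if some $\hat x\in p_{\lambda_A}^{-1}(x)$ lies in $\overline X^{\rm ss}(w)$, and by \cite[Theorem 3.1.2.8]{ADHL15} we have $\overline X^{\rm ss}(w)=\overline X^{\rm ss}(\lambda)$. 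The statement that this characterization produces the full image $p_{\lambda_A}(\widehat X\setminus \overline X^{\rm ss}(\lambda))$ (i.e.~the unstable locus is $p_{\lambda_A}$-saturated) reduces, as in the proof of Proposition~\ref{sbl}, to the observation that the $\mathfrak F$-face stratification of $\widehat X$ given in \cite[Proposition 3.3.2.8]{ADHL15} is compatible with $p_{\lambda_A}$: each stratum $\overline X(\gamma_0)$ is either contained in $\overline X^{\rm ss}(\lambda)$ (iff $\lambda\subseteq Q(\gamma_0)$) or disjoint from it, and maps onto $X(\gamma_0)$.

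Second I would deduce the second equality in each case from Lemma~\ref{le:ss}, which gives
$$\overline X \setminus \overline X^{\rm ss}(\lambda) = V(f_i\, :\, w_i\leq \lambda) \cup V(f_i\, :\, \lambda\leq w_i).$$
Intersecting with $\widehat X$ yields two pieces; the key point is that one of them is empty. In the case $\lambda\leq \lambda_A$ every $f_i$ with $\lambda_A\leq w_i$ also satisfies $\lambda\leq w_i$, hence
$$V(f_i\, :\, \lambda\leq w_i) \subseteq V(f_i\, :\, \lambda_A\leq w_i) \subseteq \overline X \setminus \widehat X,$$
where the last inclusion is Lemma~\ref{le:ss} applied to $\lambda_A$. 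Therefore $\widehat X \cap V(f_i\, :\, \lambda\leq w_i) = \emptyset$ and only the first term survives, producing~\eqref{sbl1}. The case $\lambda\geq \lambda_A$ is entirely symmetric: now $V(f_i\, :\, w_i\leq \lambda) \subseteq V(f_i\, :\, w_i\leq \lambda_A)\subseteq \overline X\setminus \widehat X$, and the other component is what remains, yielding~\eqref{sbl2}.

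The only delicate step is the first one, namely the identification $\mathbf{B}(w)=p_{\lambda_A}(\widehat X\setminus \overline X^{\rm ss}(\lambda))$, which is a foundational fact of Cox-ring theory rather than a Picard-rank-two phenomenon; once it is in hand, the Picard-rank-two hypothesis enters only through the total order on classes (used to make sense of ``$w_i\leq \lambda$'' and ``$\lambda\leq w_i$'') and the case dichotomy $\lambda\leq \lambda_A$ or $\lambda\geq \lambda_A$, which is automatic in that rank.
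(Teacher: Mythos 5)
Your proof is correct and follows essentially the same route as the paper's: the first equality is the standard Cox-theoretic description of the stable base locus (which the paper dismisses as holding ``by definition''), and the second follows from Lemma~\ref{le:ss} together with the observation that, since $\lambda\leq\lambda_A$, the component $V(f_i\,:\,\lambda\leq w_i)$ is contained in $V(f_i\,:\,\lambda_A\leq w_i)\subseteq\overline X\setminus\widehat X$ and therefore disappears upon intersecting with $\widehat X$. Your write-up merely makes explicit the details the paper leaves implicit, in particular the saturation of the unstable locus under $p_{\lambda_A}$ and the reason one of the two pieces of $\overline X\setminus\overline X^{\rm ss}(\lambda)$ is irrelevant.
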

\begin{proof}
In order to prove (\ref{sbl1}) just note that the first equality holds by definition
while the second equality is due to Lemma~\ref{le:ss} and the fact that $\lambda\leq\lambda_A$. Clearly (\ref{sbl2}) can be proved using a completely analogous argument. 
\end{proof}

The following is the main technical tool of the paper.

\begin{thm}\label{main}
Let $X = X(\lambda_A)$ be a $\mathbb Q$-factorial Mori dream space with 
Picard rank two corresponding to the maximal chamber $\lambda_A$ of the Mori chamber decomposition of $\Eff(X)$. If for any $\lambda'\leq \lambda\leq \lambda_A$ we have
\stepcounter{thm}
\begin{equation}\label{inc1}
V(f_i\, :\, w_i\leq \lambda)\setminus V(f_i\, :\, w_i\geq \lambda_A)\subsetneqq V(f_i\, :\, w_i\leq \lambda')\setminus V(f_i\, :\, w_i\geq \lambda_A)
\end{equation}
and for any $\lambda_A\leq \lambda\leq \lambda'$ we have 
\stepcounter{thm}
\begin{equation}\label{inc2}
V(f_i\, :\, w_i\geq \lambda)\setminus V(f_i\, :\, w_i\leq \lambda_A)\subsetneqq V(f_i\, :\, w_i\geq \lambda')\setminus V(f_i\, :\, w_i\leq \lambda_A)
\end{equation}
then the Mori chamber and the stable base locus decompositions of $\Eff(X)$ coincide.

Furthermore, if for any $\lambda\leq \lambda_A\leq \lambda'$ we have that 
$$V(f_i\, :\, w_i\leq \lambda)\setminus (V(f_i\, :\, w_i\leq \lambda_A)\cup V(f_i\, :\, w_i\geq \lambda_A))$$
is different from
$$V(f_i\, :\, w_i\geq \lambda')\setminus (V(f_i\, :\, w_i\leq \lambda_A)\cup V(f_i\, :\, w_i\geq \lambda_A))$$
then the stable base locus chambers of $\Eff(X)$ are convex.
\end{thm}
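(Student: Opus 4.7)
The plan is to reduce the coincidence of the two decompositions to a statement about closed $H_X$-invariant subsets of the characteristic space $\widehat{X}$, and then use the hypotheses to produce explicit points witnessing distinctness. By Remark \ref{SBLMC}, the Mori chamber decomposition refines the stable base locus decomposition, so it suffices to show that any two distinct maximal Mori chambers have different stable base loci. For a class $w$ in a maximal chamber $\lambda \leq \lambda_A$, Lemma \ref{SBLc} gives $\textbf{B}(w) = p_{\lambda_A}(V_\lambda \cap \widehat{X})$ where $V_\lambda := V(f_i \,:\, w_i \leq \lambda)$, and analogously $\textbf{B}(w) = p_{\lambda_A}(V_\lambda^{\mathrm{top}} \cap \widehat{X})$ for $\lambda \geq \lambda_A$ with $V_\lambda^{\mathrm{top}} := V(f_i \,:\, w_i \geq \lambda)$. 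Since $p_{\lambda_A}$ is a good quotient and $V_\lambda \cap \widehat{X}$ is closed and $H_X$-invariant, hence saturated under $p_{\lambda_A}$, distinct such subsets of $\widehat{X}$ have distinct images in $X$; this reduces the problem to comparing the relevant subsets inside $\widehat{X}$.

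For the same-side case, consider maximal chambers $\lambda_- < \lambda_+ \leq \lambda_A$. Writing $W_1 := V(f_i \,:\, w_i \leq \lambda_A)$ and $W_2 := V(f_i \,:\, w_i \geq \lambda_A)$, Lemma \ref{le:ss} applied to $\lambda_A$ gives $\widehat{X} = \overline{X} \setminus (W_1 \cup W_2)$, and the monotonicity of index sets yields $V_{\lambda_+} \subseteq V_{\lambda_-}$. Hypothesis (\ref{inc1}) produces a point $p \in V_{\lambda_-} \setminus V_{\lambda_+}$ with $p \notin W_2$. The key observation is that $p$ automatically lies outside $W_1$: because $p \notin V_{\lambda_+}$ there is some $f_j$ with $w_j \leq \lambda_+$ satisfying $f_j(p) \neq 0$, and $\lambda_+ \leq \lambda_A$ forces $w_j \leq \lambda_A$, so this non-vanishing $f_j$ is among the generators whose simultaneous vanishing defines $W_1$. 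Thus $p \in V_{\lambda_-} \cap \widehat{X}$ while $p \notin V_{\lambda_+} \cap \widehat{X}$, and applying $p_{\lambda_A}$ yields $\textbf{B}(\lambda_-) \neq \textbf{B}(\lambda_+)$. The symmetric case $\lambda_A \leq \lambda_- < \lambda_+$ is handled identically using hypothesis (\ref{inc2}).

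For the furthermore part, the two sets appearing in the cross-side hypothesis are precisely $V_\lambda \cap \widehat{X}$ and $V_{\lambda'}^{\mathrm{top}} \cap \widehat{X}$ once $\overline{X} \setminus (W_1 \cup W_2)$ is identified with $\widehat{X}$. Their inequality, together with good quotient injectivity on closed invariant subsets, gives $\textbf{B}(\lambda) \neq \textbf{B}(\lambda')$ for chambers on opposite sides of $\lambda_A$; combined with the same-side distinctness above this shows each SBL chamber coincides with a single maximal Mori chamber and is therefore a convex cone in the Picard rank two setting. The main anticipated obstacle throughout is the mismatch between the hypotheses (phrased as removing only one of $W_1, W_2$) and the definition of $\textbf{B}$ (which requires removal of $W_1 \cup W_2$); the positional argument in the key observation — that the strict difference $V_{\lambda_-} \setminus V_{\lambda_+}$ is already disjoint from $W_1$ by the constraint $\lambda_+ \leq \lambda_A$ — is precisely what bridges this gap and makes the same-side hypotheses sufficient to detect the required inequality of stable base loci.
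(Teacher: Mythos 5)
Your argument follows essentially the same route as the paper's: express the stable base loci via Lemma \ref{SBLc}, compare the loci $V(f_i\,:\,w_i\leq\lambda)\cap\widehat X$, and push forward along $p_{\lambda_A}$. Your ``key observation'' --- that a point of $V_{\lambda_-}\setminus V_{\lambda_+}$ avoiding $W_2$ automatically avoids $W_1$ because $W_1=V(f_i\,:\,w_i\leq\lambda_A)\subseteq V_{\lambda_+}$ --- is precisely the content of the paper's terse remark that intersecting with $\widehat X$ amounts to removing the common subset $W_1$ together with the intersection with $W_2$, and it correctly bridges the mismatch you point out between the hypotheses and the definition of $\mathbf{B}$.

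The one step that fails as written is the claim that a closed $H_X$-invariant subset of $\widehat X$ is automatically saturated under the good quotient $p_{\lambda_A}$, so that distinct such subsets have distinct images. This is false for good quotients in general: for $\mathbb{G}_m$ acting on $\mathbb{A}^2$ by scaling, the origin and the whole plane are distinct closed invariant subsets with the same image in the one-point quotient. What rescues the argument --- and what the paper invokes explicitly --- is that $X$ is $\mathbb{Q}$-factorial, so by \cite[Corollary 1.6.2.7]{ADHL15} the quotient $p_{\lambda_A}\colon\widehat X\to X$ is \emph{geometric}; its fibers are single orbits and closed invariant subsets really are saturated. Since $\mathbb{Q}$-factoriality is among the hypotheses the repair is immediate, but it must be cited: without it the inference from distinct subsets of $\widehat X$ to distinct stable base loci does not hold. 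A smaller point: you derive the ``furthermore'' by combining the cross-side hypothesis with \eqref{inc1} and \eqref{inc2}; if that conclusion is meant to stand on the cross-side hypothesis alone, one should instead observe that the loci $V(f_i\,:\,w_i\leq\lambda)$ are nested as $\lambda$ moves away from $\lambda_A$ on either side, so same-side stable base locus chambers are automatically unions of consecutive Mori chambers, and the only possible failure of convexity is a chamber straddling $\lambda_A$, which is exactly what the cross-side hypothesis excludes.
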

\begin{proof}
Let $\lambda$ be a maximal non-ample GIT chamber of $X$. Assume that $\lambda\leq \lambda_A$, where $\lambda_A$ is the ample cone of $X$. By (\ref{sbl1}) in Lemma \ref{SBLc} the stable base locus of a
class $w\in\lambda$ is 
$$ 
{\bf B}(w) 
  =
 p_{\lambda_A}(\widehat X\setminus \overline X^{\rm ss}(\lambda))
  =
 p_{\lambda_A}(\widehat X\cap V(f_i\, :\, w_i\leq \lambda))
$$
Now let $\lambda'\leq\lambda_A$
be any maximal GIT chamber distinct
from $\lambda$ and $\lambda_A$.
Without loss of generality we can assume that $\lambda'\leq\lambda$ then
by Lemma~\ref{le:gitloc} we 
deduce the following
$$
\begin{array}{l}
 V(f_i\, :\, w_i\leq \lambda_A)
 \subsetneqq
 V(f_i\, :\, w_i\leq \lambda)
 \subsetneqq
 V(f_i\, :\, w_i\leq \lambda')\\[2pt]
 V(f_i\, :\, w_i\geq \lambda_A)
 \supsetneqq
 V(f_i\, :\, w_i\geq\lambda)
 \supsetneqq
 V(f_i\, :\, w_i\geq \lambda')
\end{array}
$$
where all the inclusions are strict. Taking the intersection with $\widehat X$ is equivalent to remove from $V(f_i\, :\, w_i\leq \lambda)\subsetneq V(f_i\, :\, w_i\leq \lambda')$ the common subset $V(f_i\, :\, w_i\leq \lambda_A)$ and their intersection with $V(f_i\, :\, w_i\geq \lambda_A)$. So hypothesis (\ref{inc1}) yields that
$$
 \widehat X\cap V(f_i\, :\, w_i\leq \lambda)
 \neq
 \widehat X\cap V(f_i\, :\, w_i\leq \lambda')
$$
Since $X$ is $\mathbb Q$-factorial, the good quotient $p_{\lambda_A}\colon \widehat X\to X$
is geometric~\cite[Corollary 1.6.2.7]{ADHL15}. It follows that the images of the above sets via $p_{\lambda_A}$ remain distinct in $X$ and thus that ${\bf B}(w)\neq {\bf B}(w')$ for any $w\in\lambda^\circ$ and $w'\in\lambda'^\circ$.

Now, assume that $\lambda_A\leq \lambda$. Then (\ref{sbl2}) in Lemma \ref{SBLc} yields that the stable base locus of a class $w\in\lambda$ is 
$$
 {\bf B}(w) 
  =
 p_{\lambda_A}(\widehat X\setminus \overline X^{\rm ss}(\lambda))
  =
 p_{\lambda_A}(\widehat X\cap V(f_i\, :\, w_i\geq \lambda))
$$
In this case if $\lambda'$ is a maximal chamber distinct from $\lambda$ such that $\lambda_A\leq\lambda\leq\lambda'$ Lemma~\ref{le:ss} yields the following strict inclusions
$$
\begin{array}{l}
 V(f_i\, :\, w_i\geq \lambda_A)
 \subsetneqq
 V(f_i\, :\, w_i\geq \lambda)
 \subsetneqq
 V(f_i\, :\, w_i\geq \lambda')\\[2pt]
 V(f_i\, :\, w_i\leq\lambda_A)
 \supsetneqq
 V(f_i\, :\, w_i\leq\lambda)
 \supsetneqq
 V(f_i\, :\, w_i\leq\lambda')
\end{array}
$$
To conclude it is enough to argue as in the previous case using (\ref{inc2}) instead of (\ref{inc1}).

Summing up we showed that any pair of distinct GIT chambers lying on the same side of $\lambda_A$ gives two different stable base locus chambers. Therefore, the Mori chamber decomposition of $\Eff(X)$ coincide with its stable base locus decomposition. 

Finally, an analogous argument shows that if $\lambda\leq \lambda_A\leq\lambda'$ and our last hypothesis holds then for any $w\in \lambda$ and $w'\in\lambda'$ we have $\textbf{B}(w)\neq \textbf{B}(w)$, and hence the stable base locus chambers of $\Eff(X)$ are convex.
\end{proof}

\begin{Remark}
Let us consider the Mori dream space $X$ in Example \ref{ex1}. Note that (\ref{mon}) yields
$$
\begin{array}{l}
V(f_i\, :\, w_i\geq\lambda_A) = \{T_1=T_2=\widetilde{F}=\widetilde{G}=0\}\\[2pt]\end{array}$$
where $\widetilde{F},\widetilde{G}$ are general linear combinations of the following monomials
\stepcounter{thm}
\begin{equation*}
\left\lbrace\begin{array}{llllllll}
T_3T_8 & T_3T_9 & T_3T_{10} & T_3T_{11} & T_4T_8 & T_4T_9 & T_4T_{10} & T_4T_{11};\\ 
T_5T_8 & T_5T_9 & T_5T_{10} & T_5T_{11} & T_6T_8 & T_6T_9 & T_6T_{10} & T_6T_{11};
\end{array}\right. 
\end{equation*} 
$$\begin{array}{l}
V(f_i\, :\, w_i\leq\lambda) = \{T_7=T_8=T_9=T_{10}=T_{11}=0\}\\[2pt]
V(f_i\, :\, w_i\leq\lambda') = \{T_1=T_2=T_8=T_9=T_{10}=T_{11}=0\}\cup\{T_7=T_8=T_9=T_{10}=T_{11}=0\}
\end{array}
$$
and $V(f_i\, :\, w_i\geq\lambda_A)\supset \{T_1=T_2=T_8=T_9=T_{10}=T_{11}=0\}$ contains a component of the set $V(f_i\, :\, w_i\leq\lambda')$.
\end{Remark}

In what follows we work out some interesting consequences of Theorem \ref{main}.

\begin{Corollary}\label{irr}
Let $X = X(\lambda_A)$ be a $\mathbb Q$-factorial Mori dream space with 
Picard rank two corresponding to the maximal chamber $\lambda_A$ of the Mori chamber decomposition of $\Eff(X)$. If for any maximal chamber $\lambda$ we have that $V(f_i\, :\, w_i\leq\lambda)$ and $V(f_i\, :\, w_i\geq\lambda)$ are irreducible then the Mori chamber and the stable base locus decompositions of $\Eff(X)$ coincide.
\end{Corollary}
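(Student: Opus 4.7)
The plan is to derive this as a direct corollary of Theorem~\ref{main}: under the irreducibility hypothesis, I will verify the two strict inclusions~\eqref{inc1} and~\eqref{inc2} required by that theorem. Fix maximal GIT chambers $\lambda' \leq \lambda \leq \lambda_A$, and set
\[
 A := V(f_i\, :\, w_i\leq\lambda), \quad B := V(f_i\, :\, w_i\leq\lambda'), \quad C := V(f_i\, :\, w_i\geq\lambda_A).
\]
By Lemma~\ref{le:gitloc} we already have the strict inclusion $A \subsetneqq B$. To get the required $A\setminus C \subsetneqq B\setminus C$, the key observation is that since $B$ is irreducible by hypothesis, and $A$, $C$ are closed subsets of $B$ (after intersection), it suffices to prove $B \not\subseteq A\cup C$: then irreducibility of $B$ forces $B\setminus(A\cup C)$ to be open and dense in $B$, and any point of this set lies in $B\setminus C$ but not in $A\setminus C$.

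The step I expect to be the main obstacle is showing $B\not\subseteq C$, since $B\not\subseteq A$ is already Lemma~\ref{le:gitloc}. My plan is to use the stable base locus description of Lemma~\ref{SBLc} together with the fact that only classes in the nef cone have empty stable base locus. More precisely, pick $w\in\lambda'^\circ$; since $\lambda'\neq\lambda_A$ are distinct maximal chambers, $\lambda'^\circ$ is disjoint from $\overline{\lambda_A}=\Nef(X)$, hence $w$ is not semi-ample and ${\bf B}(w)\neq\emptyset$. By~\eqref{sbl1} this gives $\widehat X\cap B\neq\emptyset$, i.e.\ $B$ is not contained in $\overline X\setminus\widehat X = V(f_i\,:\,w_i\leq\lambda_A)\cup C$. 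Since $\lambda'\leq\lambda_A$ forces $V(f_i\,:\,w_i\leq\lambda_A)\subseteq B$, any containment $B\subseteq C$ would imply $V(f_i\,:\,w_i\leq\lambda_A)\subseteq C$ and therefore $B\subseteq V(f_i\,:\,w_i\leq\lambda_A)\cup C$, a contradiction. Hence $B\not\subseteq C$.

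Combining $B\not\subseteq A$, $B\not\subseteq C$ with the irreducibility of $B$ yields $B\not\subseteq A\cup C$, and the density argument above establishes~\eqref{inc1}. The symmetric case $\lambda_A\leq\lambda\leq\lambda'$ is handled in exactly the same way, this time setting $B := V(f_i\,:\,w_i\geq\lambda')$ and $C := V(f_i\,:\,w_i\leq\lambda_A)$, using the irreducibility of $V(f_i\,:\,w_i\geq\lambda')$ together with~\eqref{sbl2} to verify~\eqref{inc2}. Once both inclusions are established, Theorem~\ref{main} applies and gives $\MCD(X)=\SBLD(X)$, completing the proof.
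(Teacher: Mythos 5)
Your argument is correct and follows essentially the same route as the paper's: both reduce the statement to conditions~(\ref{inc1}) and~(\ref{inc2}) of Theorem~\ref{main} and use irreducibility of $V(f_i\,:\,w_i\leq\lambda')$ to conclude that it cannot be covered by the two proper closed subsets $V(f_i\,:\,w_i\leq\lambda)$ and $V(f_i\,:\,w_i\geq\lambda_A)$. The only difference is the justification of the key non-containment $V(f_i\,:\,w_i\leq\lambda')\not\subseteq V(f_i\,:\,w_i\geq\lambda_A)$: the paper deduces from the contrary an inclusion of semi-stable loci and invokes \cite[Theorem 3.1.2.8]{ADHL15}, whereas you use Lemma~\ref{SBLc} together with the nonemptiness of the stable base locus of a non-semi-ample class --- both are valid, and your write-up in fact makes explicit the final irreducibility step that the paper leaves implicit.
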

\begin{proof}
Without loss of generality we may assume that $\lambda'\leq \lambda\leq \lambda_A$. Since $V(f_i\, :\, w_i\leq\lambda')$ is irreducible either $V(f_i\, :\, w_i\geq\lambda_A)\supseteq V(f_i\, :\, w_i\leq\lambda')$ or $V(f_i\, :\, w_i\geq\lambda_A)\cap V(f_i\, :\, w_i\leq\lambda')$ is a closed subset of $V(f_i\, :\, w_i\leq\lambda')$.

Assume that $V(f_i\, :\, w_i\geq\lambda_A)\supseteq V(f_i\, :\, w_i\leq\lambda')$. Then since $V(f_i\, :\, w_i\geq\lambda')\subseteq V(f_i\, :\, w_i\geq\lambda_A)$ we get that $X^{\rm ss}(\lambda_A)\subseteq X^{\rm ss}(\lambda')$, and \cite[Theorem 3.1.2.8]{ADHL15} yields that $\lambda'\subseteq\lambda_A$, a contradiction. Therefore, $V(f_i\, :\, w_i\geq\lambda_A)$ intersects $V(f_i\, :\, w_i\leq\lambda')$ in a closed subset, and to conclude it is enough to apply Theorem \ref{main}.
\end{proof}

Now, we are ready to prove the main result of the paper.

\begin{thm}\label{main2}
Let $X$ be a $\mathbb{Q}$-factorial Mori dream space with Picard rank two, $\{f_1,\dots,f_r\}$ a minimal set of homogeneous generators for the Cox ring $\mathcal{R}(X)$, $w_i := \deg(f_i)$, and $\lambda_A$ be the ample chamber of $X$. Denote by $c$ the codimension of $X$ into its canonical toric embedding~\cite[Section 3.2.5]{ADHL15}. Define 
$$h^+:=\#\{f_i\, :\, w_i\geq\lambda_A\}\quad and \quad h^-:=\#\{f_i\, :\, w_i\leq\lambda_A\}$$ 
If $h^->c$ and $h^+>c$, then the Mori chamber and the stable base locus decomposition of $\Eff(X)$ coincide.
\end{thm}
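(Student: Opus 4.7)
My plan is to apply Theorem \ref{main}, which requires verifying the strict inclusions (\ref{inc1}) and (\ref{inc2}). These two conditions are symmetric under swapping the two halves of the effective cone together with the roles of $h^-$ and $h^+$, so it suffices to verify (\ref{inc1}) under the assumption $h^+>c$; hypothesis (\ref{inc2}) is handled identically under $h^->c$. Fix maximal chambers $\lambda'<\lambda\leq\lambda_A$ and set $J':=\{i:w_i\leq\lambda'\}$, $K:=\{k:w_k\geq\lambda_A\}$, and $V^+:=V(f_k:k\in K)\subset\bar X$. Since $\lambda'\leq\lambda_A$ and the chambers $\lambda_A$, $\lambda'$ are distinct, the sets $J'$ and $K$ are disjoint. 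The required strict inclusion amounts to finding an irreducible component $C$ of $V(f_i:i\in J')\subset\bar X$ with $C\not\subseteq V(f_i:w_i\leq\lambda)$ (supplied by Lemma \ref{le:gitloc}) and $C\not\subseteq V^+$. Since $\Cl(X)$ is torsion-free of rank two, $\mathcal R(X)$ is a UFD and $\bar X$ is irreducible of dimension $n+2$ (Remark \ref{dimCox}), so iterated application of Krull's principal ideal theorem gives $\dim C\geq n+2-|J'|$.

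\textbf{Key dimension count.} If $C$ were contained in $V^+$, then combining with $C\subseteq V(f_i:i\in J')$ gives
\[
 C\;\subseteq\; V(T_j:j\in J'\cup K)\;\subseteq\;\bar Z,
\]
a coordinate subspace of $\bar Z=\Spec K[T_1,\dots,T_r]$ of dimension $r-(|J'|+h^+)=(n+2)+c-|J'|-h^+$. Comparing with $\dim C\geq n+2-|J'|$ forces $h^+\leq c$, contradicting the hypothesis $h^+>c$. Hence $C\not\subseteq V^+$, completing the verification of (\ref{inc1}). Hypothesis (\ref{inc2}) is checked identically under $h^->c$ by the symmetric argument, and Theorem \ref{main} yields the desired equality of decompositions.

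\textbf{Main obstacle and final clause.} The delicate point is that the naive dimension bound $\dim V^+\leq r-h^+$ (viewing $V^+$ as lying in the coordinate subspace $V(T_k:k\in K)$) alone is insufficient when $|J'|$ is large: the argument only goes through after one exploits the additional containment $C\subseteq V(T_i:i\in J')$ to force $C$ into the strictly smaller coordinate subspace $V(T_j:j\in J'\cup K)$, whose ambient dimension is lower by $|J'|$. This sharpening hinges on the disjointness $J'\cap K=\emptyset$, which holds precisely because $\lambda'\leq\lambda_A$. For the \emph{In particular} clause, Proposition \ref{sm_toric} directly gives $\MCD(Z)=\SBLD(Z)$ for the toric variety $Z$; for a Mori dream hypersurface $X\subseteq Z$ with $\imath^*\colon\Cl(Z)\to\Cl(X)$ an isomorphism the codimension is $c=1$, and one checks that projectivity together with $\rk(\Cl(Z))=2$ forces $h^\pm\geq 2>1=c$ for $X$, so the main theorem applies and yields the claimed equality.
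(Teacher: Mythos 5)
Your proof is correct and follows essentially the same route as the paper: both verify the hypotheses of Theorem \ref{main} by the identical dimension count, bounding an irreducible component of $\overline X\cap V(f_i:w_i\leq\lambda')$ below by $\dim(X)+2-h'$ via Krull and observing that containment in $V(f_i:w_i\geq\lambda_A)$ would force it into a coordinate subspace of dimension $r-h'-h^{+}$, whence $h^{+}\leq c$. The only differences are cosmetic (your explicit remark on the disjointness of the index sets, and the extra paragraph on the hypersurface corollary, which is not part of this statement).
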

\begin{proof}
Consider $p_{\lambda_A}:\widehat{X}\rightarrow X$, let $\overline{X}$ be the total space of $X$, and $\overline{Z} \cong \mathbb{A}^{r}$ be the affine space with coordinates given by the $f_i$. Let $\lambda',\lambda$ be two chambers of the Mori chamber decomposition of $\Eff(X)$ as in the following picture
$$
 \begin{tikzpicture}[xscale=0.8,yscale=0.5]
 \draw[line width=0mm,->] (0,0) -- (80:4);
 \draw[line width=0mm,->] (0,0) -- (60:4);
 \draw[line width=0mm,->] (0,0) -- (40:4);
 \draw[line width=0mm,->] (0,0) -- (20:4);
 \draw[line width=0mm,->] (0,0) -- (0:4);
 \draw[line width=0mm,->] (0,0) -- (-20:4); 
 \draw[line width=0mm,->] (0,0) -- (-40:4);
 \draw[line width=0mm,->] (0,0) -- (-60:4);
 \node at (50:3) {$\lambda'$};
 \node at (10:3) {$\lambda$};
 \node at (-30:3) {$\lambda_A$};
 \draw[thick,dotted] (-40:3) arc (-40:-60:3);
 \draw[thick,dotted] (0:3) arc (0:-20:3); 
 \draw[thick,dotted] (20:3) arc (20:40:3);  
 \draw[thick,dotted] (60:3) arc (60:80:3);
 \end{tikzpicture}
$$
Recall that by (\ref{sbl1}) in Lemma \ref{SBLc} the stable base loci of classes $w\in\lambda$, $w'\in\lambda'$ are given respectively by 
$$
\begin{array}{l}
\textbf{B}(w)= p_{\lambda_A}(\widehat X\setminus \overline X^{\rm ss}(\lambda)) = p_{\lambda_A}(\widehat X\cap V(f_i\, :\, w_i\leq \lambda))\\ 
\textbf{B}(w')= p_{\lambda_A}(\widehat X\setminus \overline X^{\rm ss}(\lambda')) = p_{\lambda_A}(\widehat X\cap V(f_i\, :\, w_i\leq \lambda'))
\end{array} 
$$
and the non semi-stable locus of $\lambda_A$ is
$$V(f_i\, :\, w_i\leq \lambda_A)\cup V(f_i\, :\, w_i\geq \lambda_A)$$
Now, $\overline{X}\subset\mathbb{A}^{r}$ has dimension $\dim(X)+2$, and hence any irreducible component of the intersection $\overline{X}\cap V(f_i\, :\, w_i\leq \lambda')$ has dimension greater than or equal to $\dim(X)+2-h'$, where $h' = \#\{f_i\, :\, w_i\leq \lambda'\}$. Assume that an irreducible component of $\overline{X}\cap V(f_i\, :\, w_i\leq \lambda')$ is contained in $\overline{X}\cap V(f_i\, :\, w_i\geq \lambda_A)$. Then such component must be contained in 
$$V(f_i,f_j\, :\, w_i\leq \lambda',w_j\geq\lambda_A)$$ 
which has dimension $r-h'-h^{+}$. This forces $h^{+}\leq c$, a contradiction with our hypothesis. Now, to conclude that $\lambda,\lambda'$ are two different stable base locus chambers it is enough to recall that Lemma \ref{le:gitloc} yields $V(f_i\, :\, w_i\leq \lambda) \varsubsetneqq V(f_i\, :\, w_i\leq \lambda')$.

When $\lambda_A\leq \lambda\leq\lambda'$ we argue in a completely analogous way, and then to conclude it is enough to apply Theorem \ref{main}. 
\end{proof}

The following is the first immediate consequence of Theorem \ref{main2}.

\begin{Corollary}\label{hyp}
Let $Z$ be a projective normal $\mathbb{Q}$-factorial toric variety with $\rk(\Cl(Z))=2$, and $X\subseteq Z$ a projective normal $\mathbb{Q}$-factorial Mori dream hypersurface such that $\imath^*\colon {\Cl}(Z)\to{\Cl}(X)$ is an isomorphism. Then the Mori chamber and the stable base locus decompositions of both $\Eff(Z)$ and $\Eff(X)$ coincide.   
\end{Corollary}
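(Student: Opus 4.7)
The statement splits into two independent parts.

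The part concerning $Z$ follows immediately from Proposition~\ref{sm_toric}, since $Z$ is a projective $\mathbb{Q}$-factorial toric variety of Picard rank two, so that $\MCD(Z)=\SBLD(Z)$.

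For the part concerning $X$ the plan is to apply Theorem~\ref{main2}. The isomorphism $\imath^*\colon\Cl(Z)\to\Cl(X)$ identifies the canonical toric embedding of $X$ with its given inclusion into $Z$, so the codimension appearing in Theorem~\ref{main2} is $c=1$; moreover, this same isomorphism together with the hypersurface hypothesis makes $\mathcal R(X)$ the quotient $\mathcal R(Z)/(F)$ by a single homogeneous polynomial, so the minimal generators of $\mathcal R(X)$ inherit the degrees $w_1,\dots,w_r$ of the torus-invariant prime divisors of $Z$. With $c=1$ the hypothesis of Theorem~\ref{main2} reduces to checking $h^+\geq 2$ and $h^-\geq 2$ for the ample chamber $\lambda_A(X)$. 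To verify these bounds I would exploit the structure of projective $\mathbb{Q}$-factorial toric varieties of Picard rank two: since $X$ has Picard rank two, $\dim(X)\geq 2$ and hence $\dim(Z)\geq 3$, so $r=\dim(Z)+2\geq 5$, and by Kleinschmidt's theorem (or its $\mathbb{Q}$-factorial analogue) $Z$ is a projective bundle $\mathbb P(\mathcal O(a_0)\oplus\cdots\oplus\mathcal O(a_{k-1}))$ over a (possibly fake weighted) projective space $\mathbb P^m$, with the generator degrees of $\mathcal R(Z)$ splitting into $m+1\geq 2$ classes on the ``base'' side and $k\geq 2$ classes on the ``fiber'' side of $\lambda_A(Z)$. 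Identifying $\lambda_A(X)$ with $\lambda_A(Z)$ under $\imath^*$ by a Lefschetz-type comparison of ample classes on $X$ with their restrictions from $Z$ then delivers $h^\pm(\lambda_A(X))\geq 2$, and Theorem~\ref{main2} concludes that $\MCD(X)=\SBLD(X)$.

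The hard part I anticipate is precisely this identification $\lambda_A(X)=\lambda_A(Z)$: for a projective toric variety of Picard rank two there do exist non-ample GIT chambers for which one of $h^\pm$ equals $1$ (as happens, for instance, for the non-nef chamber of a Hirzebruch-type variety), so one must check that $\lambda_A(X)$ actually lands in the ``balanced'' region supplied by Kleinschmidt's classification rather than in such an unbalanced chamber. Once this identification is in hand, the remainder is a direct plug-in into Theorem~\ref{main2} with $c=1$.
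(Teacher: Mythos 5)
Your reduction of the statement to Proposition~\ref{sm_toric} (for $Z$) and to Theorem~\ref{main2} with $c=1$ (for $X$) is exactly the paper's strategy, but the way you verify the hypothesis $h^\pm>c$ has a genuine gap. You derive the bounds $h^\pm\geq 2$ for the chamber $\lambda_A(Z)$ from Kleinschmidt's classification and then try to transport them to $X$ via an identification $\lambda_A(X)=\lambda_A(Z)$. As you yourself observe, that identification is the hard part --- and in fact it is not available: the restriction of an ample class of $Z$ is ample on $X$, so $\imath^*(\Nef(Z))\subseteq\Nef(X)$, but nothing in the hypotheses prevents $\Nef(X)$ from being a strictly larger chamber of $\Eff(X)\cong\Eff(Z)$, so $\lambda_A(X)$ need not ``land in the balanced region'' you extract from the classification of $Z$. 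Since $h^\pm$ in Theorem~\ref{main2} are computed with respect to the ample chamber of $X$, your argument does not establish the inequalities you need, and no amount of information about which chambers of $Z$ are balanced will fix this without controlling $\lambda_A(X)$ itself.

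The missing idea is that the bounds $h^\pm\geq 2$ can be proved \emph{intrinsically} on $X$, with no reference to $Z$'s bundle structure: if, say, $h^+\leq 1$, then at most one generator degree $w_i$ satisfies $w_i\geq\lambda_A$, so the cone spanned by the remaining degrees --- which contains $\Mov(X)$, since the moving cone is the intersection of the cones spanned by all but one generator degree --- misses the interior of $\lambda_A$; that is, $\lambda_A$ would be a chamber of $\Eff(X)\setminus\Mov(X)$. This contradicts $\Nef(X)\subseteq\Mov(X)$ (nef classes on a Mori dream space are semi-ample, hence movable). The same argument gives $h^-\geq 2$, and then Theorem~\ref{main2} applies with $c=1$. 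This is precisely the one-line observation the paper uses; your Kleinschmidt detour is both unnecessary and, as it stands, inconclusive.
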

\begin{proof}
For a toric variety the claim follows from Proposition \ref{sm_toric} and it is also an immediate consequence of Theorem \ref{main2} with $c=0$. In general, following the notation in the proof of Theorem \ref{main2}, there are always at least two generators in the sets $\{f_i\, :\, w_i\geq\lambda_A\}$, $\{f_i\, :\, w_i\leq\lambda_A\}$ otherwise $\lambda_A$ would be a chamber of $\Eff(X)\setminus \Mov(X)$. Since $c = \codim_{Z}(X) = 1$ we conclude by Theorem \ref{main2}.  
\end{proof}

\begin{Remark}\label{sharp}
Theorem \ref{main2} is sharp. Indeed, the Mori dream space in Example \ref{ex1} has three Mori chamber but just two stable base locus chambers. In this example $h^{+}=c = 2$.
\end{Remark}

\begin{Remark}
An intrinsic quadric is a normal $\mathbb{Q}$-factorial projective Mori dream space with Cox ring defined by a single quadratic relation. Smooth intrinsic quadrics with small Picard rank have been studied recently in \cite{FH18}. By Corollary \ref{hyp} the Mori chamber and the stable base locus decomposition of the effective cone of an intrinsic quadric of Picard rank two coincide. 
\end{Remark}

\subsection{Picard rank two varieties with a torus action of complexity one}\label{comp1}
Recall that a variety with a torus action of complexity one is a normal complete algebraic variety $X$ with an effective action of a torus $T$ such that the biggest $T$-orbits are of codimension one in $X$.

\begin{Proposition}\label{pro:comp1}
Let $X$ be smooth rational projective variety of Picard rank two that admits a torus action of complexity one. Then the Mori chamber and the stable base locus decomposition of $\Eff(X)$ coincide.
\end{Proposition}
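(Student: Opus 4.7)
The plan is to combine the explicit classification of smooth rational projective varieties of Picard rank two admitting a torus action of complexity one given in \cite[Theorem 1.1]{FHN16} with the criterion already proved in Theorem~\ref{main2}$(ii)$.

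First, I would invoke the cited classification to reduce the problem to a finite list of explicit families. In each case one obtains a presentation of the Cox ring as a quotient $K[T_1,\ldots,T_r]/I$, where $I$ has a known number of homogeneous generators; from this presentation one reads off directly the codimension $c$ of $X$ in its canonical toric embedding together with the degrees $w_i = \deg(T_i) \in \Cl(X)_{\mathbb Q}\simeq \mathbb Q^2$ of the generators. Locating the ample chamber $\lambda_A$ (the semi-ample cone of $X$) relative to the $w_i$ then yields the two counts
\[
h^- = \#\{i : w_i \leq \lambda_A\}, \qquad h^+ = \#\{i : w_i \geq \lambda_A\}.
\]

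Next, I would proceed family by family through the classification and verify the strict inequalities $h^->c$ and $h^+>c$. Since each family is parametrized only by integer data that do not alter the total number of Cox ring generators, the codimension, or the sides of $\lambda_A$ on which the $w_i$ sit, this reduces to a finite combinatorial check on the tables of \cite{FHN16}. Once both inequalities are established in every case, Theorem~\ref{main2}$(ii)$ applies and yields $\MCD(X)=\SBLD(X)$ at once.

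The only delicate point is the bookkeeping in the case analysis: a few families in the classification have comparatively few Cox ring generators, and it is there that the inequalities $h^\pm > c$ are tightest. However, smoothness together with the complexity-one hypothesis forces a sufficient number of generators on each side of $\lambda_A$ in every family, so the bound is uniformly strict. No further geometric input beyond Theorem~\ref{main2}$(ii)$ and the classification is required.
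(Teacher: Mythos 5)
Your overall strategy --- reduce to the classification of \cite[Theorem 1.1]{FHN16} and then apply the numerical criterion $h^\pm>c$ of Theorem~\ref{main2} --- is the same one the paper uses, but as written your argument has a genuine gap: the decisive inequalities $h^->c$ and $h^+>c$ are never actually established. You assert that ``smoothness together with the complexity-one hypothesis forces a sufficient number of generators on each side of $\lambda_A$ in every family,'' but this is not an argument, and the claim that the integer parameters in the families ``do not alter \ldots the sides of $\lambda_A$ on which the $w_i$ sit'' is not even accurate (compare the three regimes $a\geq 3$, $a=2$, $a=1$ in family No.~3, where degrees collide and chambers collapse as the parameter varies). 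What actually makes the check uniform is a structural observation you are missing: with a single exception, every variety in the classification is a Mori dream \emph{hypersurface} in its canonical toric embedding, so $c=1$, and then $h^\pm\geq 2>c$ follows for free from the fact that $\lambda_A\subseteq\Mov(X)$ --- if only one generator degree satisfied $w_i\geq\lambda_A$ (resp.\ $\leq\lambda_A$), the ample chamber could not lie in the movable cone. This is exactly Corollary~\ref{hyp}, and it disposes of twelve of the thirteen families with no case-by-case bookkeeping at all.

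The remaining family (No.~13 of \cite[Theorem 1.1]{FHN16}) is precisely where your blanket assertion needs separate attention, since it is not a hypersurface in its canonical toric embedding and so $c>1$. You never isolate it. It can still be handled, but by a different (and easier) observation: its Cox ring has eight generators whose degrees fall into only two classes spanning $\Eff(X)$, so $\Eff(X)=\Mov(X)=\Nef(X)$ is a single chamber and both decompositions are trivially equal (alternatively, $h^-=h^+=4>c$ there, but one must say so). So the route is salvageable, but the proof as proposed replaces the two steps that carry all the content --- the uniform $c=1$ argument and the treatment of the one exceptional case --- with an unverified claim.
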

\begin{proof}
By \cite[Theorem 1.1]{FHN16} any smooth rational projective variety of Picard rank two with a torus action of complexity one, with just one exception, is a Mori dream hypersurface in its canonical toric embedding. Therefore, with the exception of the variety No. 13 in the statement of \cite[Theorem 1.1]{FHN16} the claim follows directly from Corollary \ref{hyp}.

On the hand, the Cox ring of the exceptional variety $X$ has eight generators $T_1,\dots, T_8$, with $\deg(T_1) = \deg(T_3) = \deg(T_5) = \deg(T_7)$, and $\deg(T_2) = \deg(T_4) = \deg(T_6) = \deg(T_8)$. Therefore, both the Mori chamber and the stable base locus decomposition of $\Eff(X)$ consist of a single chamber which is indeed the nef cone of $X$.  
\end{proof}

In what follows we apply the techniques developed in this section to compute the stable base locus decomposition which by Proposition \ref{pro:comp1} coincide with the Mori chamber decomposition, of the effective cones of the varieties in \cite[Theorem 1.1]{FHN16}.

\begin{Example}(No. 6 in \cite[Theorem 1.1]{FHN16})
In this case $X$ is a variety of dimension $m+3$ with Cox ring given by
$$\mathcal{R}(X)\cong\frac{k[T_1,\dots,T_6,S_1,\dots,S_m]}{(T_1T_2+T_3T_4+T_5^2T_6)}$$
with $m\geq 1$, and grading matrix
$$
\left(\begin{array}{ccccccccc}
0 & 2c+1 & a & b & c & 1 & 1 & \dots & 1\\ 
1 & 1 & 1 & 1 & 1 & 0 & 0 & \dots & 0
\end{array}\right) 
$$
with $a,b,c\geq 0$, $a< b$ and $a+b = 2c+1$. Here we develop the case $0<a<c$, when $a = 0$ or $a=c$ a similar argument will work. Therefore, $\MCD(X)$ is a possibly trivial coarsening of the following decomposition
$$
 \begin{tikzpicture}[xscale=1.5,yscale=1.5]
 \draw[line width=0mm,->] (0,0) -- (0,1);
 \draw[line width=0mm,->] (0,0) -- (1,1);
 \draw[line width=0mm,->] (0,0) -- (2,1);
 \draw[line width=0mm,->] (0,0) -- (3,1);
 \draw[line width=0mm,->] (0,0) -- (5,1);
 \draw[line width=0mm,->] (0,0) -- (3,0);
\node[above] at (0,1) {$w_1$};
\node[above] at (1,1) {$w_3$};
\node[above] at (2,1) {$w_5$};
\node[above] at (3,1) {$w_4$};
\node[above] at (5,1) {$w_2$};
\node[right] at (3,0) {$S_1,\dots,S_m$};
\node at (2,0.2) {$\lambda_1$};
\node at (3,0.8) {$\lambda_2$};
\node at (1.9,0.8) {$\lambda_3$};
\node at (1.1,0.8) {$\lambda_4$};
\node at (0.3,0.8) {$\lambda_5$};
 \end{tikzpicture}
$$
where $\lambda_1 = \lambda_A$ is the ample cone of $X$. Note that (\ref{sbl1}) in Lemma \ref{SBLc} yields
$$
\begin{normalsize}
\begin{array}{ll}
{\bf B}(w) = p_{\lambda_A}(\widehat X\cap V(f_i\, :\, w_i\leq \lambda_2)) = p_{\lambda_A}(\widehat X\cap \{T_4 = T_5 = T_3 = T_1 = 0\}) & \text{if} \: w\in \lambda_2;\\ 
{\bf B}(w) = p_{\lambda_A}(\widehat X\cap V(f_i\, :\, w_i\leq \lambda_3)) = p_{\lambda_A}(\widehat X\cap \{T_5 = T_3 = T_1 = 0\}) & \text{if}\: w\in \lambda_3;\\ 
{\bf B}(w) = p_{\lambda_A}(\widehat X\cap V(f_i\, :\, w_i\leq \lambda_4)) = p_{\lambda_A}(\widehat X\cap \{T_3 = T_1 = T_5^2T_6 = 0\}) & \text{if}\: w\in \lambda_4;\\
{\bf B}(w) = p_{\lambda_A}(\widehat X\cap V(f_i\, :\, w_i\leq \lambda_5)) = p_{\lambda_A}(\widehat X\cap \{T_1 = T_3T_4+T_5^2T_6 = 0\}) & \text{if}\: w\in \lambda_5.
\end{array} 
\end{normalsize}
$$
Therefore, $\MCD(X) = \SBLD(X) = \{\lambda_A,\lambda_2,\lambda_3,\lambda_4,\lambda_5\}$. 
\end{Example}

\begin{Example}(No. 8 in \cite[Theorem 1.1]{FHN16}) 
In this case $X$ is a variety of dimension $m+3$ with Cox ring given by
$$\mathcal{R}(X)\cong\frac{k[T_1,\dots,T_6,S_1,\dots,S_m]}{(T_1T_2+T_3T_4+T_5T_6)}$$
with $m\geq 2$, and grading matrix
$$
\left(\begin{array}{cccccccccc}
0 & 0 & 0 & 0 & 0 & 0 & 1 & 1   &\dots & 1\\ 
1 & 1 & 1 & 1 & 1 & 1 & 0 & a_2 & \dots & a_m
\end{array}\right) 
$$
with $0\leq a_2\leq \dots \leq a_m$ and $a_m > 0$. We develop the case $0< a_2 < \dots < a_m$, the same argument will work in the remaining cases as well. Therefore, $\MCD(X)$ is a possibly trivial coarsening of the following decomposition
$$
 \begin{tikzpicture}[xscale=2.9,yscale=1.0]
 \draw[line width=0mm,->] (0,0) -- (1,0);
 \draw[line width=0mm,->] (0,0) -- (1,1);
 \draw[thick,dotted] (1,1) -- (1,2); 
 \draw[line width=0mm,->] (0,0) -- (1,2);
 \draw[line width=0mm,->] (0,0) -- (1,3);
 \draw[line width=0mm,->] (0,0) -- (0,3);
\node[right] at (1,0) {$w_7$};
\node[right] at (1,1) {$w_8$};
\node[right] at (1,2) {$w_{m+5}$};
\node[right] at (1,3) {$w_{m+6}$};
\node[left] at (0,3) {$w_1,\dots,w_6$};
\node at (1,0.4) {$\lambda_1$};
\node at (1.2,2.5) {$\lambda_{m-1}$};
\node at (0.4,2.5) {$\lambda_m$};
 \end{tikzpicture}
$$
where $\lambda_m = \lambda_A$ is the ample cone of $X$. Note that (\ref{sbl2}) in Lemma \ref{SBLc} yields
$$
\begin{normalsize}
\begin{array}{l}
{\bf B}(w) = p_{\lambda_A}(\widehat X\cap V(f_i\, :\, w_i\geq \lambda_j)) = p_{\lambda_A}(\widehat X\cap \{S_j = \dots = S_1 = T_1T_2+T_3T_4+T_5T_6 = 0\}) 
\end{array} 
\end{normalsize}
$$
if $w\in \lambda_j$, for $j = 1,\dots,m-1$. Therefore, $\MCD(X) = \SBLD(X) = \{\lambda_A,\lambda_{m-1},\dots,\lambda_1\}$.
\end{Example}

For all the other varieties listed in \cite[Theorem 1.1]{FHN16}, with the exception of the varieties No. 3 and No. 12 for generic parameters, arguing similarly we get that the Mori chamber decomposition of the variety coincide with the one of the ambient toric variety which is given by the corresponding grading matrix in \cite[Theorem 1.1]{FHN16}. In the following we study the two exceptional cases.   

\begin{Example}(No. 3 in \cite[Theorem 1.1]{FHN16})\label{no3}
In this case $X$ is a $3$-fold with Cox ring given by
$$\mathcal{R}(X)\cong\frac{k[T_1,\dots,T_6]}{(T_1T_2T_3^2+T_4T_5+T_6^2)}$$
with $m\geq 2$, and grading matrix
$$
\left(\begin{array}{cccccc}
0 & 0 & 1 & 1 &   1 & 1\\ 
1 & 1 & 0 & 2-a & a & 1
\end{array}\right) 
$$
with $a\geq 1$. Therefore, in the case $a\geq 3$ the $\MCD(X)$ is a possibly trivial coarsening of the following decomposition
$$
 \begin{tikzpicture}[xscale=2.5,yscale=1.0]
 \draw[line width=0mm,->] (0,0) -- (1,0);
 \draw[line width=0mm,->] (0,0) -- (1,1); 
 \draw[line width=0mm,->] (0,0) -- (1,2);
 \draw[line width=0mm,->] (0,0) -- (1,-1);
 \draw[line width=0mm,->] (0,0) -- (0,2);
\node[right] at (1,0) {$w_3$};
\node[right] at (1,1) {$w_6$};
\node[right] at (1,2) {$w_5$};
\node[right] at (1,-1) {$w_4$};
\node[left] at (0,2) {$w_1,w_2$};
\node at (1,-0.4) {$\lambda_1$};
\node at (1,0.4) {$\lambda_2$};
\node at (1,1.4) {$\lambda_3$};
\node at (0.4,1.4) {$\lambda_4$}; \end{tikzpicture}
$$
where $\lambda_4 = \lambda_A$ is the ample cone of $X$. Note that (\ref{sbl2}) in Lemma \ref{SBLc} yields
$$
\begin{normalsize}
\begin{array}{ll}
{\bf B}(w) = p_{\lambda_A}(\widehat X\cap V(f_i\, :\, w_i\geq \lambda_3)) = p_{\lambda_A}(\widehat X\cap \{T_6 = T_3 = T_4 = 0\}) & \text{if} \: w\in \lambda_3;\\ 
{\bf B}(w) = p_{\lambda_A}(\widehat X\cap V(f_i\, :\, w_i\geq \lambda_2)) = p_{\lambda_A}(\widehat X\cap \{T_6 = T_3 = T_4 = 0\}) & \text{if}\: w\in \lambda_2;\\ 
{\bf B}(w) = p_{\lambda_A}(\widehat X\cap V(f_i\, :\, w_i\geq \lambda_1)) = p_{\lambda_A}(\widehat X\cap \{T_4 = 0\}) & \text{if}\: w\in \lambda_1.
\end{array} 
\end{normalsize}
$$
Therefore, $\MCD(X) = \SBLD(X) = \{\lambda_A,\lambda_2\cup\lambda_3,\lambda_1\}$.

In the case $a=2$, $\MCD(X)$ is a possibly trivial coarsening of the following decomposition
$$
 \begin{tikzpicture}[xscale=2.5,yscale=1.0]
 \draw[line width=0mm,->] (0,0) -- (1,0);
 \draw[line width=0mm,->] (0,0) -- (1,1); 
 \draw[line width=0mm,->] (0,0) -- (0,2);
 \draw[line width=0mm,->] (0,0) -- (1,2);
\node[right] at (1,0) {$w_3,w_4$};
\node[right] at (1,1) {$w_6$};
\node[right] at (1,2) {$w_5$};
\node[left] at (0,2) {$w_1,w_2$};
\node at (1,0.4) {$\lambda_2$};
\node at (1,1.4) {$\lambda_3$};
\node at (0.4,1.7) {$\lambda_4$}; \end{tikzpicture}
$$
where $\lambda_4 = \lambda_A$ is the ample cone of $X$. Note that (\ref{sbl2}) in Lemma \ref{SBLc} yields
$$
\begin{normalsize}
\begin{array}{ll}
{\bf B}(w) = p_{\lambda_A}(\widehat X\cap V(f_i\, :\, w_i\geq \lambda_2)) = p_{\lambda_A}(\widehat X\cap \{T_6 = T_3= T_4= 0\}) & \text{if} \: w\in \lambda_2;\\
{\bf B}(w) = p_{\lambda_A}(\widehat X\cap V(f_i\, :\, w_i\geq \lambda_3)) = p_{\lambda_A}(\widehat X\cap \{T_6 = T_3= T_4= 0\}) & \text{if} \: w\in \lambda_3.
\end{array} 
\end{normalsize}
$$
Therefore, $\MCD(X) = \SBLD(X) = \{\lambda_A,\lambda_2\cup \lambda_3\}$.

In the case $a=1$, $\MCD(X)$ is a possibly trivial coarsening of the following decomposition
$$
\begin{tikzpicture}[xscale=2.5,yscale=1.0]
\draw[line width=0mm,->] (0,0) -- (1,0);
\draw[line width=0mm,->] (0,0) -- (1,1); 
\draw[line width=0mm,->] (0,0) -- (0,1);
\node[right] at (1,0) {$w_3$};
\node[right] at (1,1) {$w_4,w_5,w_6$};
\node[left] at (0,1) {$w_1,w_2$};
\node at (1,0.4) {$\lambda_2$};
\node at (0.4,0.7) {$\lambda_4$}; \end{tikzpicture}
$$
where $\lambda_4 = \lambda_A$ is the ample cone of $X$. Now (\ref{sbl2}) in Lemma \ref{SBLc} yields
$$
\begin{normalsize}
\begin{array}{ll}
{\bf B}(w) = p_{\lambda_A}(\widehat X\cap V(f_i\, :\, w_i\geq \lambda_2)) = p_{\lambda_A}(\widehat X\cap \{T_3 = 0\}) & \text{if} \: w\in \lambda_2.
\end{array} 
\end{normalsize}
$$
Therefore, $\MCD(X) = \SBLD(X) = \{\lambda_A,\lambda_2\}$.
\end{Example}

\begin{Example}(No. 12 in \cite[Theorem 1.1]{FHN16})
In this case $X$ is a variety of dimension $m+2$ with Cox ring given by
$$\mathcal{R}(X)\cong\frac{k[T_1,\dots,T_5,S_1,\dots,S_m]}{(T_1T_2+T_3T_4+T_5^2)}$$
with $m\geq 2$, and grading matrix
$$
\left(\begin{array}{cccccccc}
1 & 1 & 1 & 1 & 1 & 0 & \dots & 0  \\ 
0 & 2c & a & b & c & 1 & \dots & 1
\end{array}\right) 
$$
with $0\leq a\leq c \leq b$ and $a+b = 2c$. Therefore, in the case $0<a<c<b$, $\MCD(X)$ is a possibly trivial coarsening of the following decomposition
$$
 \begin{tikzpicture}[xscale=2.9,yscale=0.7]
 \draw[line width=0mm,->] (0,0) -- (1,0);
 \draw[line width=0mm,->] (0,0) -- (1,1);
 \draw[line width=0mm,->] (0,0) -- (1,2);
 \draw[line width=0mm,->] (0,0) -- (1,3);
 \draw[line width=0mm,->] (0,0) -- (1,4);
 \draw[line width=0mm,->] (0,0) -- (0,4);
\node[right] at (1,0) {$w_1$};
\node[right] at (1,1) {$w_3$};
\node[right] at (1,2) {$w_5$};
\node[right] at (1,3) {$w_4$};
\node[right] at (1,4) {$w_2$};
\node[left] at (0,4) {$w_{m+1},\dots,w_{m+5}$};
\node at (1,0.4) {$\lambda_1$};
\node at (1,1.4) {$\lambda_2$};
\node at (1,2.4) {$\lambda_3$};
\node at (1,3.4) {$\lambda_4$};
\node at (0.4,3.4) {$\lambda_5$};
 \end{tikzpicture}
$$
where $\lambda_5 = \lambda_A$ is the ample cone of $X$. Note that (\ref{sbl2}) in Lemma \ref{SBLc} yields
$$
\begin{normalsize}
\begin{array}{ll}
{\bf B}(w) = p_{\lambda_A}(\widehat X\cap V(f_i\, :\, w_i\geq \lambda_4)) = p_{\lambda_A}(\widehat X\cap \{T_4 = T_5 = T_3 = T_1 = 0\}) & \text{if} \: w\in \lambda_4;\\ 
{\bf B}(w) = p_{\lambda_A}(\widehat X\cap V(f_i\, :\, w_i\geq \lambda_3)) = p_{\lambda_A}(\widehat X\cap \{T_5 = T_3 = T_1 = 0\}) & \text{if}\: w\in \lambda_3;\\ 
{\bf B}(w) = p_{\lambda_A}(\widehat X\cap V(f_i\, :\, w_i\geq \lambda_2)) = p_{\lambda_A}(\widehat X\cap \{T_3 = T_1 = T_5 = 0\}) & \text{if}\: w\in \lambda_2.\\
{\bf B}(w) = p_{\lambda_A}(\widehat X\cap V(f_i\, :\, w_i\geq \lambda_1)) = p_{\lambda_A}(\widehat X\cap \{T_1 = T_3T_4+T_5^2 = 0\}) & \text{if}\: w\in \lambda_1.
\end{array} 
\end{normalsize}
$$
Therefore, $\MCD(X) = \SBLD(X) = \{\lambda_A,\lambda_4,\lambda_2\cup\lambda_3,\lambda_1\}$.

If there is an equality in any of the inequalities $0\leq a \leq c \leq b$, then some $w_j$ coincide and the corresponding chambers collapse as in Example \ref{no3}. For instance, if $a=0$ then $w_1=w_3$ and the chamber $\lambda_1$ does not exist.
\end{Example}

\section{Grassmannians blow-ups}\label{grassbu}
Let $\mathbb{G}(r,n)$ be the Grassmannian parametrizing $r$-planes in $\mathbb{P}^n$, and $\mathbb{G}(r,n)_k$ the blow-up of $\mathbb{P}^n$ at $k$ general points. These blow-ups have been studied in \cite{MR18}. In particular the stable base locus decomposition of $\Eff(\mathbb{G}(r,n)_1)$ has been computed in \cite[Theorem 1.3]{MR18}. 

In this section we will compute the Cox ring of $\mathbb{G}(r,n)_1$ by exploiting its spherical nature, and as a consequence of Proposition \ref{crit1} we will answer positively to \cite[Question 6.9]{MR18} which ask whether the decomposition given in \cite[Theorem 1.3]{MR18} is the Mori chamber decomposition of $\Eff(\mathbb{G}(r,n)_1)$.

\begin{Definition}
A \textit{spherical variety} is a normal variety $X$ together with an action of a connected reductive affine algebraic group $\mathscr{G}$, a Borel subgroup $\mathscr{B}\subseteq \mathscr{G}$, and a base point $x_0\in X$ such that the $\mathscr{B}$-orbit of $x_0$ in $X$ is a dense open subset of $X$. 

Let $(X,\mathscr{G},\mathscr{B},x_0)$ be a spherical variety. We distinguish two types of $\mathscr{B}$-invariant prime divisors: a \textit{boundary divisor} of $X$ is a $\mathscr{G}$-invariant prime divisor on $X$, a \textit{color} of $X$ is a $\mathscr{B}$-invariant prime divisor that is not $\mathscr{G}$-invariant.
\end{Definition}

For instance, any toric variety is a spherical variety with $\mathscr{B}=\mathscr{G}$ equal to the torus. For a toric variety there are no colors, and the boundary divisors are the usual toric invariant divisors.

Set $\Lambda:=\left\{I\subset \{0,\dots,n\}, |I|=r+1 \right\}$ and $N:=|\Lambda|+1$. Define the \textit{Hamming distance} on $\Lambda$ as 
$$d(I,J)=|I|-|I\cap J|=|J|-|I\cap J|$$ 
for each $I,J\in\Lambda$. Note that, with respect to this distance, the diameter of $\Lambda$ is $r+1$.
We consider the Grassmannian $\mathbb{G}(r,n)$ in the usual Pl\"ucker embedding $\mathbb{G}(r,n)\subset \mathbb{P}^{N}$. 

For each pair $I=\{i_0<\dots < i_{r-1}\}, J=\{j_0<\dots <j_{r+1}\}\subset \{0,\dots,n\}$ with $|I|=r,|J|=r+2$, define a quadratic polynomial
\stepcounter{thm}
\begin{equation}\label{pluckereq}
F_{IJ}=\sum_{t=0}^{r+1}(-1)^t
p_{i_0\dots i_{r-1}j_t}
p_{j_1\dots \widehat{j_t}\dots j_{r+1}}
\end{equation}
Then the ideal of $K[p_{I}, I\in \Lambda]$ generated by the $F_{IJ}$ is the ideal defining $\mathbb{G}(r,n)\subset \mathbb{P}^{N}$ \cite[Section I.4]{Sh13}. We denote by $\mathbb{G}(r,n)_1$ the blow-up of $\mathbb{G}(r,n)$ at $p=\left\langle e_0,\dots, e_r\right\rangle$, where $\{e_0,\dots,e_n\}$ is the canonical basis of $K^{n+1}$, by $H$ the pull-back to $\mathbb{G}(r,n)_1$ of the hyperplane section of $\mathbb{P}^N$, and by $E$ the exceptional divisor of the blow-up. 

\begin{Proposition}\label{CoxG1}
In the polynomial ring $K[S,T_I,I\in \Lambda]$ consider the ideal $\mathfrak{J}$ generated by the Pl\"ucker relations \eqref{pluckereq} in the coordinates $T_I$. Then 
$$\mathcal{R}(\G(r,n)_1)\cong \frac{K[S,T_I,I\in \Lambda]}{\mathfrak{J}}$$ 
and the degree of the variable $T_I$ in $\Cl(\G(r,n)_1)=\mathbb{Z}[H]+\mathbb{Z}[E]$ is $(1,-d(I,\{0,\dots,r\}))$, where $\deg(S) = (0,1)$.
\end{Proposition}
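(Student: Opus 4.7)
The plan is to construct a natural graded ring homomorphism $\phi\colon K[S,T_I]/\mathfrak J\to\mathcal R(\G(r,n)_1)$ and verify it is an isomorphism by a dimension count. I would first note that $\G(r,n)_1$ is a $\mathbb Q$-factorial spherical variety, hence a Mori dream space by \cite{Br93}: the parabolic subgroup $P\subset\mathrm{GL}_{n+1}$ stabilizing $p=\langle e_0,\dots,e_r\rangle$ acts on $\G(r,n)_1$, and a Borel $B\subset P$ whose open Bruhat cell on $\G(r,n)$ avoids $p$ has open orbit on $\G(r,n)_1$. In particular, by Remark~\ref{dimCox} the ring $\mathcal R(\G(r,n)_1)$ is an integral domain of Krull dimension $\dim\G(r,n)+2$.

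To construct $\phi$, I would work in the affine chart $\{p_{I_0}\neq 0\}$ around the smooth point $p$ with $I_0:=\{0,\dots,r\}$: parametrizing $\G(r,n)$ by matrices $[\mathrm{Id}\mid A]$ with $A$ of size $(r+1)\times(n-r)$, each Plücker coordinate $p_I/p_{I_0}$ becomes a $d(I,I_0)\times d(I,I_0)$ minor of $A$, hence vanishes at $p$ to order exactly $d(I,I_0)$ in the regular local parameters given by the entries of $A$. Therefore the pull-back $\pi^*p_I$ to $\G(r,n)_1$ factors uniquely as $S^{d(I,I_0)}\widetilde T_I$ with $\widetilde T_I\in H^0(\G(r,n)_1,H-d(I,I_0)E)$ and $S$ a defining section of $E$. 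The assignment $T_I\mapsto\widetilde T_I$ and $S\mapsto S$ extends to a graded ring homomorphism $K[S,T_I]\to\mathcal R(\G(r,n)_1)$: it is well-defined modulo $\mathfrak J$ because the Plücker relations $F_{IJ}$ are homogeneous in the new bigrading (a direct computation gives $d(I\cup\{j_t\},I_0)+d(J\setminus\{j_t\},I_0)=2(r+1)-|I\cap I_0|-|J\cap I_0|$, independent of $t$) and because each $F_{IJ}(\widetilde T)\cdot S^{N_{IJ}}=\pi^*F_{IJ}(p)=0$ with $S$ a non-zero-divisor.

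For surjectivity, a section in the bigraded piece of $\mathcal R(\G(r,n)_1)$ of degree $(a,-b)$ is the strict transform of a degree $a$ element of the Plücker ring vanishing to order $\geq b$ at $p$. Since $p$ is a smooth point, the ordinary $b$-th power of the homogeneous ideal $(T_I:I\neq I_0)$ modulo $\mathfrak J$ coincides with the set of homogeneous elements vanishing to order $\geq b$ at $p$, so such an element is a sum of monomials $\prod_j T_{I_j}$ with $\sum_j d(I_j,I_0)\geq b$, each of which lies in the image of $\phi$.

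For injectivity, the Plücker ideal $\mathfrak J$ is prime, so the source $K[S,T_I]/\mathfrak J\cong(K[T_I]/\mathfrak J)[S]$ is an integral domain of Krull dimension $\dim\G(r,n)+2$, matching that of $\mathcal R(\G(r,n)_1)$; a surjection of integral domains of equal finite Krull dimension is automatically an isomorphism. The main obstacle I anticipate is the clean verification that multiplicity at the smooth point $p$ is detected on the nose by ordinary powers of the homogeneous ideal of $p$ in the Plücker ring, which reduces via dehomogenization in the chart $\{p_{I_0}\neq 0\}$ to the regularity of the local ring of $\G(r,n)$ at $p$.
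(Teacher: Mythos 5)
Your overall architecture (build an explicit graded map $T_I\mapsto\widetilde T_I$, $S\mapsto S$, check well-definedness via the weighted homogeneity of the Pl\"ucker relations, get injectivity from a Krull dimension count between two domains) is sound, and the injectivity step is essentially the paper's own ``no further relations'' argument via Remark \ref{dimCox}. But the route is genuinely different from the paper's at the crucial generation step: the paper never constructs the map by hand. It exhibits $\G(r,n)_1$ as a spherical variety, identifies $E$, $D_0$ as boundary divisors and $D_1,\dots,D_{r+1}$ as colors, and invokes \cite[Theorem 4.5.4.6]{ADHL15} to conclude that $S$ and the $T_I$ generate the Cox ring, reading off the degrees from $\mult_p D_I=d(I,I_0)$ (via \cite[Lemma 7.2.1]{Ri17}). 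You replace that black box with a direct surjectivity argument, and that is where your proof has a genuine gap.

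The false step is the claim that, because $p$ is a smooth point, the ordinary power $P^b$ of the homogeneous ideal $P=(T_I: I\neq I_0)$ in the Pl\"ucker ring equals the set of homogeneous elements vanishing to order $\geq b$ at $p$. Regularity of $\mathcal O_{\G(r,n),p}$ only identifies the order filtration with the powers of the \emph{local} maximal ideal; globally it identifies the vanishing-order filtration with the \emph{symbolic} powers $P^{(b)}$, and here $P^{(b)}\neq P^b$. Concretely, any $T_I$ with $d(I,I_0)=2$ (e.g.\ $p_{23}$ on $\G(1,3)$ with $I_0=\{0,1\}$) is a linear form vanishing to order two at $p$, so it lies in $P^{(2)}$ but cannot lie in $P^2$, which contains no elements of degree one. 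What surjectivity actually requires is the stronger statement that every degree-$a$ element vanishing to order $\geq b$ at $p$ is a combination of monomials $\prod_{j=1}^a T_{I_j}$ with $\sum_j d(I_j,I_0)\geq b$ --- a span strictly larger than $P^b_a$ --- and your argument does not establish this. The statement is true, and can be proved from ingredients you already have: since each $F_{IJ}$ is homogeneous for the weight $w(T_I)=d(I,I_0)$ (your own computation), the Pl\"ucker ring is bigraded by $({\rm degree},{\rm weight})$, and dehomogenizing at $T_{I_0}=1$ embeds each degree-$a$ piece into $K[a_{ij}]$ sending the weight-$b$ component into forms of degree exactly $b$ in the entries of $A$; hence the weight filtration computes $\mult_p$ on the nose. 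With that substitution your proof closes; as written, the key inclusion is justified by a false assertion.
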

\begin{proof}
By \cite[Proposition 4.1]{MR18} under the action of the reductive group
$$\mathscr{G}\!=\!\left\{
\begin{pmatrix}
	A & 0 \\ 0 & B
\end{pmatrix}, A\in GL_{r+1}, B\in GL_{n-r}\: |\: \det(A)\det(B)=1 \right\}\subset SL_{n+1}$$
the blow-up $\G(r,n)_1$ is a spherical variety. We consider the Borel subgroup $\mathscr{B}\subset \mathscr{G}
$ of matrices with upper triangular blocks. Consider the divisors $D_0, \dots, \ D_{r+1}$ in $\G(r,n)$ defined as
$D_j:=\{[\Sigma]\in \G(r,n): \Sigma \cap \Gamma_j \neq \varnothing \}$, where
$$\begin{cases}
\Gamma_0=\left\langle e_{r+1},\dots, e_n\right\rangle;\\
\Gamma_1=\left\langle e_0,e_{r+1},\dots, e_{n-1}\right\rangle;\\
\vdots\\
\Gamma_{r+1}=\left\langle e_0,\dots,e_r,e_{r+1},\dots, e_{n-r-1}\right\rangle;\\
\Gamma'=\left\langle e_0,\dots,e_r\right\rangle. 
\end{cases}$$
Pulling-back these divisors via the blow-up map we obtain divisors in
$\mathbb{G}(r,n)_1$. For sake of simplicity we will use the same notation for divisors in $\mathbb{G}(r,n)$ and their pull-backs in $\mathbb{G}(r,n)_1$.

Now, note that $\mathscr{G}$ preserves the dimension of the intersection of a given subspace of $\mathbb{P}^n$ with $\Gamma_0$ and with $\Gamma'$. Therefore $\mathscr{G}\cdot D_0=D_0$ and $\mathscr{G}\cdot E=E$ that is, $D_0$ and $E$ are boundary divisors. Note also that each $D_j$ is a $\mathscr{B}$-invariant but not a $\mathscr{G}$-invariant prime divisor, and therefore $D_1,\dots, D_{r+1}$ are colors. 

In order to determine the $\mathscr{G}$-orbit of $D_1,\dots,D_{r+1}$ we have to describe these divisors explicitly as zeros of polynomials in the Pl\"ucker coordinates.

In $\mathbb{G}(r,n)$ the divisor $D_0$ is given by $D_0=V(p_{0,\dots, r})$. Indeed, if $q\in D_0$ then $q=[\Sigma]$ with 
$\Sigma \cap \Gamma_0 \neq \varnothing$
and therefore it can be represented with a matrix whose first row is of the form $(0, \dots, 0,a_{r+1},\dots,a_n)$.

This implies that $p_{0\dots r}(q)=0$. Conversely, if  $p_{0\dots r}(q)=0$ then the most left $(r+1)\times (r+1)$ sub-matrix of any matrix representing $q$ has zero determinant, therefore there is another representation of $q$ such that the first row has the following form $(0, \dots, 0,a_{r+1},\dots,a_n)$, and we conclude that $q\in D_0$. 

Similarly, setting $I_j=\{e_0,\dots,e_{j-1},e_{r+1},\dots,e_{n-j}\}$ for $j=0,\dots,r+1$, we have $D_j=V(p_{I_j})$. Note that $d(I_j,I_0)=j$ for any $j$. More generally, one can consider the prime divisor $D_I:=V(p_I)$ for any $I\in \Lambda$. We claim that the linear span of the orbit $\mathscr{G}\cdot D_j$ is given by
\stepcounter{thm}
\begin{equation}
\label{orbitofG}
\lin{(\mathscr{G}\cdot D_j)} = \left\langle \{
D_J; d(J,I_0)=j
\}\right\rangle, j=0,\dots, r+1
\end{equation}

Note that given $I,I'\in \Lambda$ with distance one and such that the non shared indexes, say $i\in I\backslash I', i'\in I'\backslash I$ are in $\{0,\dots, r\}$ we can find a $g\in\mathscr{G}$ such that $g(e_i)=e_{i'}$ and $g(e_j)=e_j$ for $j\neq i,i'$. The same holds if the non shared indexes are in $\{r+1,\dots, n\}$, and we get that 
$$\left\{
D_J; d(J,I_0)=j
\right\}\subset\mathscr{G}\cdot D_j, j=0,\dots, r+1
$$
Since the $D_J$ such that $d(J,I_0)=j, j=0,\dots, r+1$, give a generating set of $H^0(\mathbb{G}(r,n),\mathcal{O}_{\mathbb{G}(r,n)}(1))$ we get (\ref{orbitofG}). Now, let $S$ and $T_I$ be the canonical sections associated respectively to $E$ and $D_I$. By \cite[Theorem 4.5.4.6]{ADHL15} $S, T_I, I\in \Lambda$ are generators of 
$\mathcal{R}(\mathbb{G}(r,n)_1)$. Furthermore, by \cite[Lemma 7.2.1]{Ri17} for any $I\in \Lambda$ we have that 
$$\mult_{\left\langle e_0,\dots, e_r\right\rangle}D_I=1+\dim (\left\langle e_i, i\notin I\right\rangle\cap \left\langle e_0,\dots, e_r\right\rangle)= |(\{0,\dots,n\}\backslash I)
\cap \{0,\dots,r\}|=d(I,I_0)$$

Therefore, if $\deg(S) = (0,1)$ the degree of the other generators of $\mathcal{R}(\mathbb{G}(r,n)_1)$ in $\Pic(\mathbb{G}(r,n)_1)=\mathbb{Z}[H]\oplus\mathbb{Z}[E]$ is given by $\deg(T_I)=(1,-d(I,I_0))$.

The matrix representing this grading has size 
$2\times (N+1)$ and is of the following form
$$
A=\begin{pmatrix}
0 & 1 & 1 &\dots &  1 & 1 & \dots &1\\
1 & 0 & -1&\dots & -1 & -2& \dots &-(r+1)
\end{pmatrix}
$$
Our next aim is to find relations among the generators of $\mathcal{R}(\mathbb{G}(r,n)_1)$. Note that for each pair
$I=\{i_0<\dots < i_{r-1}\},
J=\{j_0<\dots <j_{r+1}\}\subset \{0,\dots,n\}$ with $|I|=r,|J|=r+2$, the polynomial
$$
G_{IJ}=\sum_{t=0}^{r+1}(-1)^t
T_{i_0\dots i_{r-1}j_t}
T_{j_1\dots \widehat{j_t}\dots j_{r+1}}
$$
is homogeneous of degree 
$(2,-|I\backslash I_0|-|J\backslash I_0|)$. Let $\mathfrak{J}\subset K[T_I,I\in \Lambda]$ be the ideal generated by the $G_{IJ}$. Since $\frac{K[T_I,I\in \Lambda]}{\mathfrak{J}}$ is the homogeneous coordinate ring of $\mathbb{G}(r,n)$, then 
$$\dim(K[T_I,I\in \Lambda]/ \mathfrak{J})=(r+1)(n-r)+1$$
and Remark \ref{dimCox} yields
$$\dim(K[S,T_I,I\in \Lambda]/ \mathfrak{J})\!=\!(r+1)(n-r)+2\!= \!
\dim(\mathbb{G}(r,n)_1)+
\rank(\Pic(\mathbb{G}(r,n)_1)))
\!=\!\dim(\mathcal{R}(\mathbb{G}(r,n)_1))$$
where we denote by $\mathfrak{J}$ the ideal generated by the polynomials $G_{IJ}$ in $K[S,T_I,I\in \Lambda]$. We conclude that there are no further relations in 
$\mathcal{R}(\mathbb{G}(r,n)_1)$ and hence $\mathcal{R}(\mathbb{G}(r,n)_1)=\frac{K[S,T_I,I\in \Lambda]}{\mathfrak{J}}$ as claimed. 
\end{proof}

Now, we are ready to compute the Mori chamber decomposition of $\Eff(\G(r,n)_1)$.

\begin{Proposition}
Let $\G(r,n)_1$ be the blow-up of the Grassmannian $\mathbb{G}(r,n)$ at a point. Then we have that $\Eff(\G(r,n)_1)  = \left\langle E, H-(r+1)E\right\rangle$, $\Nef(\G(r,n)_1)  =\left\langle H, H-E\right\rangle$ and
$$
\Mov(\G(r,n)_1)=\begin{cases}
\left\langle H,H-rE\right\rangle &\mbox{ if } n=2r+1;\\
\left\langle H,H-(r+1)E\right\rangle &\mbox{ if } n>2r+1.
\end{cases}
$$
Furthermore, $\MCD(\G(r,n)_1)$ and $\SBLD(\G(r,n)_1)$ coincide and their walls are given by the divisors $E,H,H-E,\dots,H-(r+1)E$ as represented in the following picture
\vspace{-0.2cm}
$$
\begin{tikzpicture}[scale=1.4]
\fill[gray0] (0,0) -- (1.0,0)
arc [start angle=0, end angle=90,
radius=1.0];
\fill[gray] (0,0) -- (1.2,0)
arc [start angle=0, end angle=-20, radius=1.2];
\fill[gray1] (0,0) -- (0.93*1.3,-0.34*1.3)
arc [start angle=-20, end angle=-40, radius=1.3];
\fill[gray2] (0,0) -- (0.5*1.5,-0.86*1.5)
arc [start angle=-60, end angle=-80, radius=1.5];
\draw[->][line width=0mm] (0,0) -- (0,1.5)   node[left,very near end]{$E$};
\draw[->][line width=0mm] (0,0) -- (3,0)   node[above,very near end]{$H$};
\draw (0.9,0.9) node[thick,gray0]{$\mathcal{C}_{-1}$};
\draw[->][line width=0mm] (0,0) -- (0.93*2,-0.34*2) 
node[right,thick]{$H-E$}
node[above right, pos=0.6, thick,gray]{$\mathcal C_0$}; 
\draw[->][line width=0mm] (0,0) -- (0.76*2,-0.64*2) 
node[right,thick]{$H-2E$}
node[above right, pos=0.7, thick,gray1]{$\mathcal C_1$}; \draw (0.9,-1) node[thick]{$\rddots$};
\draw[->][line width=0mm] (0,0) -- (0.5*2,-0.86*2)
node[right,thick]{$H-rE$}
node[below left, pos=0.8, thick,gray2]{$\mathcal C_r$};  
\draw[->][line width=0mm] (0,0) -- (0.17*2,-0.98*2)
node[below,thick]{$H-(r+1)E$};
\draw (2.85,1.3) node[thick]{$\mathcal{C}_{-1} =  [E,H)$,};
\draw (4.05,1) node[thick]{$\mathcal{C}_0 = \Nef(\G(r,n)_1) = \left\langle H,H-E\right\rangle$,};
\draw (4.7,0.7) node[thick]{$\mathcal{C}_i = (H-iE,H-(i+1)E]$ for $i = 1,\dots,r$,};
\end{tikzpicture}
$$
where with the notation $\mathcal{C}_i = (H-iE,H-(i+1)E]$ we mean that the ray spanned by $H-(i+1)E$ belongs to $\mathcal{C}_i$ but the ray spanned by $H-iE$ does not, and similarly with the notation $\mathcal{C}_{-1} = [E,H)$ we mean that the ray spanned by $E$ belongs to $\mathcal{C}_{-1}$ but the ray spanned by $H$ does not.
\end{Proposition}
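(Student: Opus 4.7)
The plan is to combine Proposition \ref{CoxG1}, which gives an explicit presentation of $\mathcal{R}(\G(r,n)_1)$, with Proposition \ref{crit1} and the stable base locus decomposition already established in \cite[Theorem 1.3]{MR18}. The key observation is that the degrees of the Cox ring generators obtained in Proposition \ref{CoxG1} are precisely the walls of the SBLD from \cite[Theorem 1.3]{MR18}, so Proposition \ref{crit1} gives MCD $=$ SBLD as a one-line consequence.

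First I would read off the effective cone from Proposition \ref{CoxG1}. Since $\G(r,n)_1$ has Picard rank two, $\Eff(\G(r,n)_1)$ is the convex hull in $\Cl(\G(r,n)_1)_{\mathbb Q}$ of the rays spanned by the degrees of a minimal set of generators of the Cox ring. By Proposition \ref{CoxG1} these degrees are $\deg(S) = E$ and $\deg(T_I) = H - d(I,I_0)E$ with $d(I,I_0) \in \{0,1,\dots,r+1\}$ (the value $r+1$ is attained because $n \geq 2r+1$). The extreme rays are $E$ and $H-(r+1)E$, giving $\Eff(\G(r,n)_1) = \langle E, H-(r+1)E\rangle$. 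The nef cone $\langle H, H-E\rangle$ is recognized as the GIT chamber containing the (ample) pullback of the Pl\"ucker polarization perturbed slightly toward $-E$.

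Next, by \cite[Theorem 1.3]{MR18} the walls of $\SBLD(\G(r,n)_1)$ are precisely the rays $E, H, H-E, H-2E, \dots, H-(r+1)E$. By the previous step these are exactly the degrees of the generators of $\mathcal{R}(\G(r,n)_1)$, so Proposition \ref{crit1} applies and yields $\MCD(\G(r,n)_1) = \SBLD(\G(r,n)_1)$, with the chamber structure as depicted. Finally, I would determine $\Mov(\G(r,n)_1)$ chamber by chamber: the chamber $[E,H)$ is excluded because its interior classes contain $E$ in their base locus; the interior chambers $\mathcal{C}_i$ with $1 \leq i \leq r-1$ lie in $\Mov$ because the count of generators of degree $H-iE$, which equals $\binom{r+1}{i}\binom{n-r}{i} \geq 2$, ensures that $|H-iE|$ has no fixed divisor; and the chamber $\mathcal{C}_r = (H-rE, H-(r+1)E]$ is controlled by $\binom{n-r}{r+1}$, which equals $1$ precisely when $n=2r+1$, in which case $|H-(r+1)E|$ reduces to the single section $T_{\{r+1,\dots,2r+1\}}$ and the chamber falls outside $\Mov$, whereas for $n>2r+1$ the count is $\geq 2$ and the chamber belongs to $\Mov$.

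The main obstacle is the analysis of the moving cone in the borderline case $n=2r+1$, where the distinction between the two cases for $\Mov$ hinges on the uniqueness of a single Pl\"ucker coordinate of maximal distance from $I_0$; here I would have to argue carefully that this unique section forces a divisorial fixed component, which is immediate since the section is itself an irreducible effective divisor. Apart from this bookkeeping, the proof is a direct assembly of Proposition \ref{CoxG1}, Proposition \ref{crit1}, and \cite[Theorem 1.3]{MR18}, and in particular gives the positive answer to \cite[Question 6.9]{MR18}.
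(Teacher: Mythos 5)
Your proof of the central claim is exactly the paper's: cite \cite[Theorem 1.3]{MR18} for the stable base locus decomposition, observe via Proposition \ref{CoxG1} that every generator degree of $\mathcal{R}(\G(r,n)_1)$ lies on a wall of that decomposition, and conclude $\MCD=\SBLD$ by Proposition \ref{crit1}. Where you diverge is in the auxiliary cone statements: the paper simply quotes \cite[Theorem 1.3]{MR18} for $\Eff$, $\Nef$ and $\Mov$, whereas you re-derive $\Eff$ and $\Mov$ from the generator degrees $(1,-d(I,I_0))$ and $(0,1)$, with the dichotomy at $n=2r+1$ coming from $\binom{n-r}{r+1}=1$. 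That buys some independence from \cite{MR18} for those parts, but two steps need tightening: the nef cone is not really ``recognized'' from the grading alone (you must still know that $H-E$ is nef and $H-(1+\epsilon)E$ is not, which is geometric input), and ``at least two generators of degree $H-iE$ implies no fixed divisorial component'' is not literally valid --- the correct justification is the formula $\Mov(X)=\bigcap_i\operatorname{cone}(w_j : j\neq i)$ for pairwise non-associated prime generators (\cite[Proposition 3.3.2.3]{ADHL15}), which applies here since the $T_I$ cut out the prime divisors $D_I$ and does give exactly your case distinction. Neither issue affects the main conclusion, which is proved the same way as in the paper.
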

\begin{proof}
The claims on the effective, nef and movable cones follow from \cite[Theorem 1.3]{MR18}. Furthermore, by \cite[Theorem 1.3]{MR18} the decomposition displayed in the statement is the stable base locus decomposition of $\Eff(\G(r,n)_1)$. Now, by Proposition \ref{CoxG1} all the generators of $\mathcal{R}(\G(r,n)_1)$ appear in the walls of the stable base locus decomposition of $\Eff(\G(r,n)_1)$, and then Proposition \ref{crit1} yields that the Mori chamber and the stable base locus decomposition of $\Eff(\G(r,n)_1)$ coincide. 
\end{proof}

\bibliographystyle{amsalpha}
\bibliography{Biblio}

\end{document}